\providecommand{\U}[1]{\protect\rule{.1in}{.1in}}
\DeclareMathOperator{\Gr}{Gr}
\DeclareMathOperator{\Stab}{Stab.Prod}
\DeclareMathOperator{\inv}{inv}
\DeclareMathOperator{\sign}{sign}
\newtheorem{theorem}{Theorem} [section]
\newtheorem{corollary}[theorem]{Corollary}
\newtheorem{definition}[theorem]{Definition}
\newtheorem{example}[theorem]{Example}
\newtheorem{lemma}[theorem]{Lemma}
\newtheorem{observation}[theorem]{Observation}
\newtheorem{proposition}[theorem]{Proposition}
\newtheorem{remark}[theorem]{Remark}
\newenvironment{proof}[1][Proof]{\noindent\textbf{#1.} }{\ \rule{0.5em}{0.5em}}
\begin{document}

\author{Robert Shwartz\\Department of Mathematics\\Ariel University, Israel\\robertsh@ariel.ac.il
\and Vadim E. Levit\\Department of Computer Science\\Ariel University, Israel\\levitv@ariel.ac.il}
\title{Signed Hultman Numbers and Signed Generalized Commuting Probability in Finite Groups}
\date{}
\maketitle

\begin{abstract}
\noindent
Let $G$ be a finite group. Let $\pi$ be a permutation from $S_{n}$. We study
the distribution of probabilities of equality
\[
\label{main.eq.1} a_{1}a_{2}\cdots a_{n-1}a_{n}=a_{\pi_{1}}^{\epsilon_{1}%
}a_{\pi_{2}}^{\epsilon_{2}}\cdots a_{\pi_{n-1}}^{\epsilon_{n-1}}a_{\pi_{n}%
}^{\epsilon_{n}},
\]
when $\pi$ varies over all the permutations in $S_{n}$, and $\epsilon_{i}$
varies over the set $\{+1, -1\}$. By \cite{CGLS}, the case where all
$\epsilon_{i}$ are $+1$ led to a close connection to Hultman numbers. In this
paper we generalize the results, permitting $\epsilon_{i}$ to be $-1$. We
describe the spectrum of the probabilities of signed permutation equalities in
a finite group $G$. This spectrum turns out to be closely related to the
partition of $2^{n}\cdot n!$ into a sum of the corresponding signed Hultman numbers.

\textbf{Keywords:} commuting probability, signed permutation, signed Hultman number, breakpoint graph, finite group.

\textbf{MSC 2010 classification:} 20P05, 20B05, 05A05, 20D60.

\end{abstract}

\section{Introduction}

The study of the probability that two random elements in a finite group $G$
commute (\textit{commuting probability}) has received considerable research attention. In 1968, Erd\"{o}s
and Turan proved that%
\[
\Pr(a_{1}a_{2}=a_{2}a_{1})>\frac{\log(\log|G|))}{|G|}.
\]
In early 1970s, Dixon observed that the commuting probability is $\leq\frac
{1}{12}$ for every finite non-abelian simple group (this was submitted as a
problem in Canadian Mathematical Bulletin \textbf{13} (1970), with a solution
appearing in 1973). In 1973, Gustafson proved that the commuting probability is
equal to $\frac{k(G)}{|G|}$, where $k(G)$ is the number of conjugacy classes
in $G$ \cite{G}. Based on this observation, Gustafson then obtained the
upper bound of the commuting probability in any finite non-abelian group to be
$\frac{5}{8}$ \cite{G}. This upper bound is actually attained in many finite
groups, including the two non-abelian groups of order $8$. Since then, there
has been significant research concerning probabilistic aspects of finite
groups. Many of these studies can be regarded as variations of the commuting
probability problem. For example, Das and Nath \cite{NathDash1},
\cite{DasNath1}, \cite{DasNath2} study the probability ${\Pr}_{g}^{\omega}(G)$ of the equality
\[
a_{1}a_{2}...a_{n-1}a_{n}a_{\pi_{1}}^{-1}a_{\pi_{2}}^{-1}\cdots a_{\pi_{n-1}%
}^{-1}a_{\pi_{n}}^{-1}=g
\]
in a finite group $G$ . The word
\[
a_{1}a_{2}...a_{n-1}a_{n}a_{\pi_{1}}^{-1}a_{\pi_{2}}^{-1}\cdots a_{\pi_{n-1}%
}^{-1}a_{\pi_{n}}^{-1},
\]
in which $a_{1}a_{2}...a_{n-1}a_{n}$ vary over all the elements of $G$, is
denoted by $\omega$. Thus this is a generalization of the classical study of
the commuting probability, in which case $\omega=a_{1}a_{2}a_{2}^{-1}%
a_{1}^{-1}$ and $g=1$ \cite{Nath2011}. In the same direction of generalization
of the commuting probability, Cherniavsky, Goldstein, Levit, and Shwartz
\cite{CGLS} have introduced an interesting connection between the distribution
of the probabilities
\[
{\Pr}_{\pi}(G)=\Pr(a_{1}a_{2}\cdots a_{n}=a_{\pi_{1}}a_{\pi_{2}}\cdots
a_{\pi_{n}}),
\]
for a finite group $G$, when $\pi\in S_{n}$, and the number of alternating
cycles in the cycle graph $\mathcal{G}(\pi)$ (For the definition of cycle graph see \cite{Hultman1999}, \cite{DL}). It
was shown in \cite{CGLS} that this probability for a generic group $G$
depends only on the number of the cycles in the cycle graph $\mathcal{G}(\pi)$ and that
the spectrum of probabilities, as $\pi$ runs over all the permutations in
$S_{n}$, is the decomposition of $n!$ to the Hultman numbers. In this paper,
we generalize the results of \cite{CGLS}, where we consider the distribution
of the probabilities
\[
{\Pr}_{\pi_{\epsilon}}\left(  G \right)  =\Pr(a_{1}a_{2}\cdots a_{n}%
=a_{\pi_{1}}^{\epsilon_{1}}a_{\pi_{2}}^{\epsilon_{2}}\cdots a_{\pi_{n}%
}^{\epsilon_{n}}),
\]
for a finite group $G$, where $\pi\in S_{n}$, and $\epsilon_{i}\in\{+1, -1\}$ for every $1\leq i\leq n$.
Notice, the paper \cite{CGLS} deals with the case where $\epsilon_{i}=+1$ for every $1\leq i\leq n$.

In this paper, we show that ${\Pr}_{\pi_{\epsilon}}\left(  G\right)  $ for a
generic group $G$ depends only on the number of the cycles
 in the breakpoint graph $\Gr(\pi_{\epsilon})$  (which is a generalization of the cycle graph $\mathcal{G}(\pi)$) for a
signed permutation $\pi_{\epsilon}$ \cite{APA}, (where $\pi_{\epsilon}(i)=\epsilon
_{i}\cdot\pi(i)$ for every $\pi\in S_{n}$ and every $1\leq i\leq n$), and
that the spectrum of probabilities, as $\pi$ runs over all the permutations in
$S_{n}$, and $\epsilon$ runs over all the $n$-tuples of $\{+1,-1\}^{n}$, is
the decomposition of $2^{n}\cdot n!$ to signed Hultman numbers. Similarly to
the permutation group $S_{n}$, the signed permutation group is a Coxeter group
as well, which is denoted by $B_{n}$. The idea to generalize theorems
concerning permutation groups to signed permutation groups has a rich history.
For instance, there is a theorem about Coxeter covers of symmetric groups
\cite{rtv}, where characterizing special quotients of Coxeter groups are defined
by a line Coxeter graph as extensions of a symmetric group $S_{n}$. The theorem
was generalized into a theorem about Coxeter covers of classical Coxeter groups
\cite{AST}, where we consider characterization of special quotients of
particular Coxeter groups as extensions of a classical Coxeter group $B_{n}$
or $D_{n}$ (a subgroup of $B_{n}$, which can be considered as a specific
signed permutation group. For more details see \cite{BB}), by using
signed-graphs as a generalization of the dual line Coxeter graphs used in
\cite{rtv}. By the theorem of MacMahon \cite{Mac}, the major-index of the
permutations in $S_{n}$ is equi-distributed with the Coxeter length. Roichman
and Adin \cite{AR} proposed a generalization of the theorem of MacMahon, where
they defined the flag-major-index for signed permutations, which is a
generalization of the major-index for permutations, and they proved that the
flag-major-index is equi-distributed with the Coxeter length. A few years
later, Shwartz, Adin and Roichman \cite{SAR} generalized further the
flag-major-index for $D_{n}$ (even-signed permutations \cite{BB}), where they
proved that the flag-major-index is equi-distributed with the Coxeter length
of $D_{n}$ as well.

For Hultman numbers, signed Hultman numbers, and the related definitions and
notations we refer to \cite{DL}, \cite{APA}. Recall that the Hultman number
$S_{H(n,k)}$ counts the number of permutations $\pi$ in $S_{n}$ whose cycle
graph $\mathcal{G}(\pi)$ decomposes into $k$ alternating cycles. For a permutation $\pi$
in $S_{n}$, let $H(\pi)$ be the number of
alternating cycles in $\mathcal{G}(\pi)$.

The paper is organized as follows. In Section \ref{pre}, we give basic
definitions about groups, permutations, signed permutations, generalized
commuting probabilities, and signed generalized commuting probabilities, which
we use in the paper. In Section \ref{hult}, we recall the basic definitions
about breakpoint graphs, and the related alternating cycles for the
signed permutation groups $B_{n}$ as defined in \cite{APA}. In Section
\ref{exch-cyc}, we define $\left(  [x]--[y]\right)  $-exchange and $\left([x]--[y]\right)$-cyclic
operations on the breakpoint graph $\Gr(\pi)$ for
$\pi\in B_{n}$, as a generalization of the definitions of $(x--y)$-exchange
and $(x--y)$-cyclic operations on the cycle graph $\mathcal{G}(\pi)$ for $\pi\in S_{n}$, as defined in \cite{CGLS}. We also define a new operation, which has not
been defined or used yet in \cite{CGLS}, namely $\left([|x|]--[|x|+1]\right)$-sign-change operation. We show some important
properties of the mentioned operations on $\Gr(\pi)$, which we need in order describe
the connections between the number of cycles in the breakpoint graph $\Gr(\pi)$ for $\pi\in B_{n}$ and the
signed generalized commuting probability, which is induced by a
signed permutation $\pi\in B_{n}$. In Section \ref{main}, we prove the main
theorem of the paper, which draws a connection between the number of the
alternating cycle of the breakpoint graph $\Gr(\pi)$ for an element $\pi$ in
the signed permutation group $B_{n}$, and the signed generalized commuting
probability, which is induced by a signed permutation $\pi$. We show that the
main theorem is a generalization of the theorem about the connection between
the number of the alternating cycles of the cycle graph $\mathcal{G}(\pi)$ for an
element $\pi$ in the symmetric group $S_{n}$, and the generalized commuting
probability, which is induced by a signed permutation $\pi$, as described in
\cite{CGLS}. Moreover, we will observe two special families of finite groups
$G$, where the signed generalized commuting probability has interesting
properties. In Section \ref{conc}, we give our conclusions about the results
of the paper, which we compare to the results of \cite{CGLS}, since this paper
is a generalization of it. Finally, we offer ideas for further generalizations
of the generalized commuting probability.

\section{Preliminaries}

\label{pre}

We start with some important definitions about finite groups, which we use
throughout the paper.

\begin{definition}\label{group-basic}
Let $G$ be a finite group.

\begin{itemize}
\item For every $g,h\in G$, denote by $g^{h}$ the element $h^{-1}gh$, which is
the conjugate of $g$ by $h$.

\item For every $g\in G$, denote by $\Omega_{g}(G)$ the conjugacy class of $g$
in $G$, i.e. the set of all the elements of $G$ the form $h^{-1}gh$, where
$h\in G$.

\item For every $g\in G$, denote by $C_{G}(g)$ the centralizer of the element
$g\in G$, i.e. the subgroup of $G$, consisting of all the elements $h\in G$
such that $gh=hg$.

\item For every $g,h\in G$, denote by $[g,h]$ the commutator $g^{-1}h^{-1}gh$
of $g$ and $h$.

\item Denote by $c(G)$ the number of conjugacy classes in $G$.

\item Denote by $\Omega(G,1),...,\Omega(G,c(G))$ the conjugacy classes of $G$,
ordered in some fixed order, where $\Omega(G,1)$ is the
conjugacy class of the element $1\in G$.

\item For every $1\leq j\leq c(G)$, denote by $\Omega(G, j^{-1})$ the
conjugacy class, which contains the inverses of the elements of $\Omega(G, j)$.

\item For every $1\leq j\leq c(G)$, denote by $\Omega(G, j^{2})$ the conjugacy
class, which contains the squares of the elements of $\Omega(G, j)$.

Notice, the notation $\Omega(G, j^{2})$
nothing to do with the square of the number $j$, it just says that we consider the conjugacy class, which elements are the squares of the elements of the elements in $\Omega(G, j)$ (e.g. $\Omega(G, 3^{2})$ is not the same to  $\Omega(G, 9)$. The meaning of $\Omega(G, 3^{2})$ is the conjugacy class whose elements are the squares of the elements in $\Omega(G, 3)$.);

\item For every $g\in G$, denote by $ic_{g}$ the integer such that $g\in
\Omega(G, ic_{g})$;

\item For a sequence $(g_{1},g_{2},\dots,g_{n})$ of $n$ elements of $G$,
denote by\newline$\Stab_{n}(g_{1},g_{2},\dots,g_{n})$ the set of all the
sequences $(a_{1},a_{2},\dots,a_{n})$ of $n$ elements of $G$ such that
\[
a_{1}^{-1}g_{1}a_{1}\cdot a_{2}^{-1}g_{2}a_{2}\cdots a_{n}^{-1}g_{n}%
a_{n}=g_{1}\cdot g_{2}\cdots g_{n}.
\]

\item Denote by $c_{i_{1},...,i_{n};j}(G)$ the nonnegative integer number of
different ways of breaking an element $y\in\Omega(G,j)$ into a product
$y=x_{1}x_{2}\cdots x_{n}$, so that each $x_{t}$ for $1\leq t\leq n$, belongs
to the class $\Omega(G,i_{t})$.
\end{itemize}
\end{definition}

Notice that $c_{i_{1},...,i_{n};j}(G)$ does not depend on the choice of the
particular element $y$ from the set $\Omega(G,j)$.

\begin{proposition}
\label{c-i-j} \label{c} The following hold
\begin{itemize}
\item $c_{i_{1},...,i_{n};j}(G)=c_{i_{\phi(1)},...,i_{\phi(n)};j}(G)$ for
every permutation $\phi\in S_{n}$

\item $c_{i_{1},...,i_{n};j^{-1}}(G)=c_{i_{1}^{-1},...,i_{n}^{-1};j}(G)$

\item $c_{i_{1},i_{2},...,i_{n}, k_{1},k_{2},...,k_{n};1}(G)=\sum
\limits_{j=1}^{c(G)}\left\vert \Omega(G, j)\right\vert \cdot c_{i_{1}%
,i_{2},...,i_{n};j}(G)\cdot c_{k_{1},k_{2},...,k_{n};j^{-1}}(G).$
\end{itemize}
\end{proposition}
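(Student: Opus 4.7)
The plan is to treat the three items in order, using the fact that each side of each identity is a cardinality of an explicit set of factorizations, and producing bijections (or a simple counting argument) between these sets.

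For the first identity, I would reduce to adjacent transpositions $\phi = (t,t+1)$ and build a bijection directly on the level of factorizations of a fixed $y \in \Omega(G,j)$. Given $y = x_1 \cdots x_{t-1} x_t x_{t+1} x_{t+2} \cdots x_n$ with $x_s \in \Omega(G,i_s)$, I would send it to
\[
y = x_1 \cdots x_{t-1} \bigl(x_t x_{t+1} x_t^{-1}\bigr)\, x_t\, x_{t+2} \cdots x_n,
\]
noting that $x_t x_{t+1} x_t^{-1}$ is conjugate to $x_{t+1}$ and hence lies in $\Omega(G, i_{t+1})$, while $x_t$ stays in $\Omega(G,i_t)$. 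The map is clearly involutive up to relabeling, so it is a bijection between the two sets of factorizations. Since adjacent transpositions generate $S_n$, iteration handles arbitrary $\phi$.

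For the second identity, I would use the fact that $y = x_1 \cdots x_n$ if and only if $y^{-1} = x_n^{-1} \cdots x_1^{-1}$, together with the observation that $x \in \Omega(G, i)$ is equivalent to $x^{-1} \in \Omega(G, i^{-1})$ and that $y \in \Omega(G, j) \Leftrightarrow y^{-1} \in \Omega(G, j^{-1})$. Applying this to a factorization counted by $c_{i_1,\ldots,i_n;j^{-1}}(G)$ (picking a representative of $\Omega(G, j^{-1})$ of the form $y^{-1}$ with $y \in \Omega(G, j)$) yields a factorization counted by $c_{i_n^{-1},\ldots,i_1^{-1};j}(G)$; the previous item then re-orders the classes to give $c_{i_1^{-1},\ldots,i_n^{-1};j}(G)$.

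The third identity is a convolution. The left-hand side counts $2n$-tuples $(x_1,\ldots,x_n,y_1,\ldots,y_n)$ with $x_s \in \Omega(G,i_s)$, $y_s \in \Omega(G,k_s)$, and $x_1 \cdots x_n \cdot y_1 \cdots y_n = 1$. I would partition this set according to the common element $w = x_1 \cdots x_n = (y_1 \cdots y_n)^{-1}$. Grouping $w$ by its conjugacy class $\Omega(G,j)$, Definition \ref{group-basic} gives, for each fixed $w \in \Omega(G, j)$, exactly $c_{i_1,\ldots,i_n;j}(G)$ choices for the $x$-tuple and $c_{k_1,\ldots,k_n;j^{-1}}(G)$ choices for the $y$-tuple (since $w^{-1} \in \Omega(G, j^{-1})$). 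Summing over $w \in \Omega(G, j)$ contributes the factor $|\Omega(G, j)|$, and then summing over $j$ yields the stated identity. The only subtle point to flag is the class-invariance of $c_{i_1,\ldots,i_n;j}(G)$ that is already recorded right after the definition; once that is in hand, the argument is purely bookkeeping, and I do not anticipate any substantive obstacle.
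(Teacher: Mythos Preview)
Your proposal is correct and is precisely the kind of ``basic group theoretical arguments'' the paper invokes; the paper does not spell out a proof beyond that one sentence, so you are simply filling in the details it omits. One minor wording issue: in the first item your map $(x_t,x_{t+1})\mapsto(x_t x_{t+1} x_t^{-1},\,x_t)$ is not literally involutive---its inverse is $(w_t,w_{t+1})\mapsto(w_{t+1},\,w_{t+1}^{-1}w_t w_{t+1})$---but it is visibly a bijection, so the argument stands.
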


We get the proof of Proposition \ref{c-i-j} by using the definition of
$c_{i_{1},...,i_{n};j}(G)$, and basic group theoretical arguments.\newline

Notice that $\Stab_{n}(g_{1},g_{2},...,g_{n})$ is a generalization of the
notion of the centralizer of an element, and that $\Stab_{1}(g)$ is just
$C_{G}(g)$. \\ The size $\left\vert \Stab_{n}(g_{1},g_{2},...,g_{n}%
)\right\vert $ of $\Stab_{n}(g_{1},g_{2},...,g_{n})$ depends only on the
conjugacy classes of $g_{1},...,g_{n}$, and not on the elements themselves.
\newline\newline Recall that for all $g\in G$,
\[
\left\vert \Omega_{g}(G)\right\vert \cdot\left\vert C_{G}(g)\right\vert =
\left\vert G\right\vert
\]

\begin{definition}
\label{real} Let $G$ be a finite group.

\begin{itemize}
\item An element $g\in G$ is considered to be `real`, if $g$ is conjugate to
its inverse, $g^{-1}$.

\item $G$ is called an ambivalent group, if every element of $G$ is `real`.

\item A conjugacy class $\Omega$ of $G$ is called a `real conjugacy class`, if
every element of $g\in\Omega$ is a `real element`.

\item Denote by $rc(G)$ the number of `real conjugacy classes` of a finite
group $G$.
\end{itemize}
\end{definition}

By Definition \ref{real} it can be easily concluded that a finite group $G$ is
an ambivalent group if and only if every conjugacy class of $G$ is a `real
conjugacy class`. Hence, we conclude the following proposition.

\begin{proposition}\label{ambivalent-c}
Let $c_{i_{1},...,i_{n};j}(G)$ be a nonnegative integer as defined in the last part of Definition \ref{group-basic}. Then $c_{i_{1},...,i_{n};j^{-1}}(G)=c_{i_{1},...,i_{n};j}(G)$
for every conjugacy class $\Omega(G, j)$, if and only if $G$ is an
ambivalent group.

\end{proposition}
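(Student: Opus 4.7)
The plan is to prove the biconditional by unwinding the definition of $c_{i_{1},\dots,i_{n};j}(G)$ and exploiting the trivial $n=1$ case to force the ambivalence condition.

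For the easy direction ($\Leftarrow$), I would observe that $G$ is ambivalent exactly when, for every $g \in G$, the element $g^{-1}$ lies in the same conjugacy class as $g$. Translated into the indexing of Definition \ref{group-basic}, this means $\Omega(G,j^{-1}) = \Omega(G,j)$ for every $1 \le j \le c(G)$. But then the set of factorizations $y = x_{1} x_{2} \cdots x_{n}$ with $y \in \Omega(G,j)$ and the set of factorizations with $y \in \Omega(G,j^{-1})$ are literally the same set (since we may pick the same representative $y$), and so the counts $c_{i_{1},\dots,i_{n};j}(G)$ and $c_{i_{1},\dots,i_{n};j^{-1}}(G)$ coincide.

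For the nontrivial direction ($\Rightarrow$), the key move is to specialize to $n=1$ and $i_{1}=j$. Then $c_{j;j}(G)$ counts the ways to write a fixed $y \in \Omega(G,j)$ as a single factor $x_{1} \in \Omega(G,j)$; the only possibility is $x_{1}=y$, so $c_{j;j}(G)=1$. On the other hand, $c_{j;j^{-1}}(G)$ counts the ways to write a fixed $y' \in \Omega(G,j^{-1})$ as a single factor $x_{1} \in \Omega(G,j)$. Since conjugacy classes are either equal or disjoint, this count equals $1$ if $\Omega(G,j)=\Omega(G,j^{-1})$ and equals $0$ otherwise. The assumed equality $c_{j;j^{-1}}(G) = c_{j;j}(G) = 1$ therefore forces $\Omega(G,j) = \Omega(G,j^{-1})$ for every $j$, which is precisely the definition of $G$ being ambivalent.

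I do not expect any serious obstacle: the proposition is essentially a tautology once the indexing convention is understood, and the whole content lies in noticing that the $n=1$ diagonal case is enough to detect ambivalence. The only care needed is to emphasize that, in the forward direction, the hypothesis is quantified over all choices of $n$ and $(i_{1},\dots,i_{n})$, so we are entitled to pick the particularly transparent test case $n=1$, $i_{1}=j$.
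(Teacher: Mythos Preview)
Your proposal is correct and follows essentially the same argument as the paper: both directions are handled identically, with the forward direction obtained by specializing to $n=1$ and $i_{1}=j$ so that $c_{j;j}(G)=1$ forces $c_{j;j^{-1}}(G)=1$ and hence $\Omega(G,j)=\Omega(G,j^{-1})$. Your write-up is in fact a bit more explicit than the paper's in justifying why $c_{j;j^{-1}}(G)=1$ implies the two classes coincide.
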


\begin{proof}
If $G$ is an ambivalent group, then every $g\in G$ is conjugate to its inverse $g^{-1}$. Thus $c_{i_{1},...,i_{n};j^{-1}}(G)=c_{i_{1},...,i_{n};j}(G)$
for every conjugacy class $\Omega(G, j)$ by the definition of $c_{i_{1},...,i_{n};j}(G)$.
Now, we assume $c_{i_{1},...,i_{n};j^{-1}}(G)=c_{i_{1},...,i_{n};j}(G)$
for every conjugacy class $\Omega(G, j)$, then in particular, $1=c(j;j)=c(j^{-1};j)$ for every conjugacy class $\Omega(G, j)$, which implies $G$ is an ambivalent group.
\end{proof}

\begin{definition}
Let $\inv(G)$ be the number of involutions in a finite group $G$; i.e, the
number of elements $b\in G$ such that $b^{2}=1$.
\end{definition}

Now, we recall the definition of signed permutation group, which is the
Coxeter group $B_{n}$ (for details see \cite{BB}).

\begin{definition}
\label{bn} For every $n\in\mathbb{N}$,

\begin{itemize}
\item Let $S_{n}$ be the permutation group of the elements of the set $\{1, 2,
\dots, n\}$;

\item Let $B_{n}$ be the permutation group of the elements of the set
\newline$\{+1,+2,...,+n,-1,-2,...,-n\}$ such that every permutation $\pi$
satisfies \newline$\pi_{-i}=-\pi_{+i}$ for every $1\leq i\leq n$, where
$-\left(  -i\right)  $ is defined to be $+i$.
\end{itemize}
\end{definition}

\begin{remark}
Definition \ref{bn} implies that every $\pi\in B_{n}$ is uniquely determined
by \newline$\pi_{+1},\pi_{+2},...,\pi_{+n}$. Therefore, we denote $\pi$ as
\[
\langle\pi_{+1}\;\;\pi_{+2}\;\;\ldots\;\;\pi_{+n}\rangle.
\]

\end{remark}

\begin{definition}
For every element $i\in\{+1,+2,...,+n\}$, let it be $\sign\left(i\right)=(+)$,
and for every element $i\in\{-1,-2,...,-n\}$, let it be $\sign\left(i\right)=(-)$.
\end{definition}

\begin{definition}
Let $\pi$ be a sign-permutation in $B_{n}$. We define $|\pi|$ as the corresponding
permutation of $S_{n}$, which satisfies $|\pi|_{i}=|\pi_{i}|$.
\end{definition}

Now, we recall some definition concerning generalized commuting probability.

\begin{definition}
\cite{CGLS}\label{Pr-power} ${\Pr}^{m}(G)=\Pr(a_{1}a_{2}\cdots a_{m}=a_{m}a_{m-1}\cdots
a_{1}).$
\end{definition}

\begin{proposition}
\cite{CGLS}\label{hultman-pr} For every finite group $G$ and every $\pi\in
S_{n}$, the number of alternating cycles $H(\pi)$
in the cycle graph $\mathcal{G}(\pi)$  is connected to the probability ${\Pr}
_{\pi}(G)$ by the following equality:
\[
{\Pr}_{\pi}(G)={\Pr}^{n-H(\pi)+1}(G).
\]

\end{proposition}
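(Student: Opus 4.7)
The plan is to recast ${\Pr}_\pi(G)$ in terms of conjugacy-class counts and then reduce, by a sequence of probability-preserving substitutions, to the canonical equation $b_1 b_2 \cdots b_m = b_m b_{m-1} \cdots b_1$ with $m = n - H(\pi) + 1$. The starting point is to write
$$
{\Pr}_\pi(G) = \frac{1}{|G|^n} \cdot \#\left\{(a_1, \ldots, a_n) \in G^n : a_1 a_2 \cdots a_n = a_{\pi_1} a_{\pi_2} \cdots a_{\pi_n}\right\},
$$
and group the solutions by the conjugacy classes $\Omega(G, ic_{a_i})$ of the entries, packaging the resulting counts in terms of the quantities $c_{i_1, \ldots, i_n; j}(G)$ of Definition \ref{group-basic}. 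By Proposition \ref{c-i-j}, these counts are symmetric in their lower indices, so the only remaining $\pi$-dependence is combinatorial; the goal becomes to show it factors through $H(\pi)$ alone.

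Next I would introduce the key algebraic manipulations that implement the reduction. Given two consecutive positions $i, i+1$ on one side of the equation, a substitution of the form $a_i \mapsto a_i^{a_{i+1}}$ together with an appropriate relabeling (or a move that collapses the product $a_i a_{i+1}$ into a single new variable) is a bijection on $G^n$ that preserves the validity of the equation and effectively shortens it. On the combinatorial side, each such move corresponds to an $(x-y)$-exchange or cyclic operation on the cycle graph $\mathcal{G}(\pi)$, merging two edges within a single alternating cycle and thereby decreasing $n$ by one while leaving $H(\pi)$ unchanged. Iterating until no further merging is possible reduces any $\pi$ to the reversal permutation on $m = n - H(\pi) + 1$ letters, whose defining equation is precisely $b_1 \cdots b_m = b_m \cdots b_1$ and hence contributes ${\Pr}^m(G)$.

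The main obstacle is to verify that each elementary substitution is a genuine bijection on the solution set (so that the probability is preserved exactly, not merely up to inequality) and that its effect on $\mathcal{G}(\pi)$ is faithfully described by a single merging of edges within one alternating cycle, with no inadvertent creation or destruction of other cycles. This bookkeeping---translating each algebraic rewrite into a concrete graph surgery and checking that the alternating-cycle count behaves as claimed---is the technical heart of the argument. Once it is in place, induction on $n - H(\pi)$ closes the proof: the base case is the identity permutation ($H(\pi) = n$, yielding ${\Pr}^1(G) = 1$), and the terminal case of the reduction is the full reversal on $m$ letters, yielding ${\Pr}^m(G) = {\Pr}^{n-H(\pi)+1}(G)$.
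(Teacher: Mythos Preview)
This proposition is not proved in the present paper; it is quoted from \cite{CGLS}. The method of \cite{CGLS}, as reflected in this paper's $B_n$ generalization (Sections~\ref{exch-cyc}--\ref{main}), is exactly the one you gesture at: define $(x\!-\!y)$-exchange and $(x\!-\!y)$-cyclic operations on permutations, check that each preserves both ${\Pr}_\pi(G)$ and $H(\pi)$, and then show that any $\pi\in S_n$ is $(.\!-\!.)$-equivalent to a canonical permutation whose probability is ${\Pr}^{\,n-H(\pi)+1}(G)$ by inspection. So your core strategy is the right one and matches the source.

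Two points need cleaning up, however. First, the opening paragraph about packaging solutions by conjugacy classes via $c_{i_1,\dots,i_n;j}(G)$ is a detour: you never use it, and the actual argument is purely by bijective change of variables on $G^n$. Second, and more substantively, your description of what the elementary moves do is off. The $(x\!-\!y)$-exchange and cyclic operations are maps $S_n\to S_n$; they do \emph{not} ``decrease $n$ by one while leaving $H(\pi)$ unchanged.'' Both $n$ and $H(\pi)$ stay fixed throughout, and the reduction terminates not at a shorter word but at a canonical element of $S_n$ itself---for instance the permutation that reverses the first $k=n-H(\pi)+1$ positions and fixes the rest, for which ${\Pr}_\pi(G)={\Pr}^{k}(G)$ is immediate after cancelling the trailing fixed letters. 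If instead you genuinely want an $n$-reducing argument (collapsing $a_i a_{i+1}$ into one variable), that is a different mechanism: it applies only when $i,i{+}1$ are adjacent on both sides, it removes a length-$2$ alternating cycle, and hence decreases \emph{both} $n$ and $H(\pi)$ by one. Either route works, but your sketch currently conflates the two.
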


Now, we define signed generalized commuting probability, ${\Pr}_{\pi}(G)$ for
$\pi\in B_{n}$, which generalizes the definition of the generalized
commuting probability ${\Pr}_{\pi}(G)$ for $\pi\in S_{n}$ from
\cite{CGLS}.

\begin{definition}
Let $G$ be a finite group, and $\pi$ be a signed permutation in $B_{n}$.
Then
\[
{\Pr}_{\pi}(G):=\Pr(a_{1}a_{2}\cdots a_{n}=a_{|\pi|_{1}}^{\epsilon_{1}(\pi
)}a_{|\pi|_{2}}^{\epsilon_{2}(\pi)}\cdots a_{|\pi|_{n}}^{\epsilon_{n}(\pi)}),
\]
where for every $1\leq i\leq n$, either $\epsilon_{i}(\pi)=1$ or $\epsilon
_{i}(\pi)=-1$ such that:

\begin{itemize}
\item In case $\sign\left(\pi_{i}\right)=(+)$, $\epsilon_{i}(\pi)=1$;

\item In case $\sign\left(\pi_{i}\right)=(-)$, $\epsilon_{i}(\pi)=-1$.
\end{itemize}
\end{definition}

\begin{definition}
\label{m-pi-minus} Let $m(\pi)$ be the number of indices $1\leq i\leq n$, such
that $\epsilon_{i}(\pi)=-1$.
\end{definition}

\begin{definition}
A signed permutation $\pi$ considered as a positive element in case
\newline$m(\pi)=0$, which means $\epsilon_{i}(\pi)=1$ for every $1\leq i\leq
n$, otherwise $\pi$ is considered as a non-positive element.
\end{definition}

Now, we generalize the definition of ${\Pr}^{k}(G)$ from Definition \ref{Pr-power} for negative $k$ as well.

\begin{definition}
For every group $G$, and every $k\in\mathbb{N}$, define $\Pr^{-k}(G)$ to be
\[
{\Pr}^{-k}(G)=\Pr(a_{1}a_{2}\cdots a_{n}=a_{1}^{-1}a_{2}^{-1}\dots a_{k}^{-1}a_{k+1}a_{k+2}\cdots a_{n}).
\]

\end{definition}

\begin{definition}
\label{ik} Let $I^{(-k)}\in B_{n}$ the following signed permutation:

\begin{itemize}
\item $I^{(-k)}_{j}=-j$ for every $j$ such that $|j|\leq k$;

\item $I^{(-k)}_{j}=j$ for every $j$ such that $|j|>k$.
\end{itemize}

i.e., $I^{(-k)}=\langle-1\;\;\ldots\;\;-k\;\;+(k+1)\;\;+n\rangle$.
\end{definition}

\begin{remark}
${\Pr}^{-k}(G)=\Pr_{\pi}(G)$ for $\pi=I^{(-k)}$.
\end{remark}

\begin{proposition}
\label{pr-1}
Let $G$ be a finite group, then the following holds:
\begin{itemize}
\item
${\Pr}^{-1}(G)=\frac{\inv(G)}{|G|}$.

\item
${\Pr}^{-2}(G)=\frac{rc(G)}{|G|}$.
\end{itemize}
\end{proposition}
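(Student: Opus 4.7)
The plan is to reduce the defining equation of ${\Pr}^{-k}(G)$ to a condition depending only on the first $k$ coordinates. The suffix $a_{k+1}a_{k+2}\cdots a_n$ appears unchanged on both sides, so cancelling it produces the equivalent equation
\[
a_1 a_2 \cdots a_k = a_1^{-1} a_2^{-1} \cdots a_k^{-1},
\]
which involves only $(a_1,\ldots,a_k)$. The remaining $n-k$ entries are then free, so ${\Pr}^{-k}(G) = N_k / |G|^k$, where $N_k$ is the number of $k$-tuples in $G^k$ satisfying this reduced equation.

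For $k=1$, the reduced equation is $a_1 = a_1^{-1}$, i.e.\ $a_1^2 = 1$, so $N_1 = \inv(G)$ by definition and the first formula follows.

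For $k=2$, the reduced equation is $a_1 a_2 = a_1^{-1} a_2^{-1} = (a_2 a_1)^{-1}$. The next step is to apply the bijection $(a_1,a_2) \mapsto (h,g) := (a_1,\, a_1 a_2)$ of $G\times G$ onto itself. Since $a_2 a_1 = a_1^{-1}(a_1 a_2) a_1 = h^{-1} g h$, the equation becomes $g = h^{-1} g^{-1} h$, equivalently $h g h^{-1} = g^{-1}$; that is, $h$ conjugates $g$ to its inverse. For each non-real $g$ there is no admissible $h$, whereas for real $g$ the admissible $h$ form a left coset of $C_G(g)$ and hence there are $|C_G(g)|$ of them. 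Grouping by real conjugacy class and using $|\Omega|\cdot |C_G(g)| = |G|$ for $g\in\Omega$, one obtains
\[
N_2 \;=\; \sum_{\Omega\text{ real}} \; \sum_{g\in\Omega} |C_G(g)| \;=\; \sum_{\Omega\text{ real}} |G| \;=\; rc(G)\cdot |G|,
\]
and therefore ${\Pr}^{-2}(G) = rc(G)/|G|$.

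The only step needing any ingenuity is the change of variables $(a_1,a_2)\leftrightarrow(h,g)$ that converts the unwieldy relation $a_1 a_2 = (a_2 a_1)^{-1}$ into the transparent conjugation identity $hgh^{-1}=g^{-1}$; once this reformulation is in hand, the count is a routine orbit--stabilizer argument over the real conjugacy classes of $G$.
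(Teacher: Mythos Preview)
Your proof is correct and follows essentially the same route as the paper: both cancel the irrelevant suffix and then, for $k=2$, perform a change of variables turning $a_1a_2=(a_2a_1)^{-1}$ into the conjugation condition $b^{-1}ab=a^{-1}$ (the paper substitutes $a\mapsto ba$ while you substitute $g=a_1a_2$, which is the mirror-image move). Your write-up is slightly more detailed in spelling out the orbit--stabilizer count that yields $N_2=rc(G)\cdot|G|$, which the paper leaves implicit.
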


\begin{proof}
Let $G$ be a finite group, then
\[
{\Pr}^{-1}(G)=\Pr(a=a^{-1})=\Pr(a^{2}=1)=\frac{\inv(G)}{|G|}.
\]

\[
{\Pr}^{-2}(G)=\Pr(ab=a^{-1}b^{-1})=
\]
\[
=\Pr(ab=(ba)^{-1})=\Pr(b^{-1}(ba)b=(ba)^{-1})=
\]
\[
=\Pr(b^{-1}ab=a^{-1})=\frac{rc(G)}{|G|}
\]

\end{proof}

\begin{proposition}
\label{pr-minus-formula}
\[
{\Pr}^{-2n}(G)=\Pr\left(a_{1}a_{2}\cdots a_{2n-1}a_{2n}=a_{1}^{-1}a_{2}^{-1}\cdots a_{2n-1}^{-1}a_{2n}^{-1}\right)=
\]
\[
=\sum\limits_{i_{1},i_{2},...,i_{n}=1}^{c(G)}\frac{c_{i_{1},i_{2},...,i_{n}%
,i_{1},i_{2},...,i_{n};1}(G)}{\left\vert \Omega(G, i_{1})\right\vert
\cdot\left\vert \Omega(G, i_{2})\right\vert \cdot\cdots\cdot\left\vert
\Omega(G, i_{n})\right\vert \cdot\left\vert G\right\vert ^{n}}
\]

and

\[
{\Pr}^{-(2n+1)}(G)=\Pr\left(a_{1}a_{2}\cdots a_{2n-1}a_{2n}a_{2n+1}=a_{1}%
^{-1}a_{2}^{-1}\cdots a_{2n-1}^{-1}a_{2n}^{-1}a_{2n+1}^{-1}\right)=
\]
\[
=\sum\limits_{i_{1},i_{2},...,i_{n},j=1}^{c(G)}\frac{\left\vert \Omega(G, j^{2})\right\vert \cdot c_{i_{1},i_{2},...,i_{n},i_{1},i_{2},...,i_{n};j^{2}}%
(G)}{\left\vert \Omega(G, i_{1})\right\vert \cdot\left\vert \Omega(G, i_{2})\right\vert \cdot
\cdots\cdot\left\vert \Omega(G, i_{n})\right\vert
\cdot\left\vert G\right\vert ^{n}}.
\]

\end{proposition}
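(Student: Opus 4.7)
The plan is to count solutions of each defining equation by conditioning on conjugacy classes and applying Proposition \ref{c}, bullet 3; I first treat the even case.

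Starting from $|G|^{2n}\Pr^{-2n}(G)=\#\{(a_1,\ldots,a_{2n})\in G^{2n}:a_1\cdots a_{2n}=a_1^{-1}\cdots a_{2n}^{-1}\}$, I would apply the bijective substitution $x_k:=a_k$ for $k\le n$ and $y_k:=a_{n+k}^{-1}$ for $k\le n$. Setting $\Phi(z_1,\ldots,z_n):=(z_1\cdots z_n)(z_n\cdots z_1)$, a short rearrangement rewrites the defining equation as $\Phi(x)=\Phi(y)$. Partitioning by the common value $w=\Phi(x)=\Phi(y)$ then gives
\[
|G|^{2n}\,\Pr^{-2n}(G)=\sum_{w\in G}F(w)^{2},\qquad F(w):=|\{z\in G^n:\Phi(z)=w\}|.
\]
Next I would evaluate $\sum_w F(w)^2$ by stratifying over conjugacy classes. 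For fixed classes $(i_1,\ldots,i_n)$ of $(z_1,\ldots,z_n)$ and class $j$ of the forward partial product $u=z_1\cdots z_n$, Proposition \ref{c}, bullet 1, implies that the reverse product $v=z_n\cdots z_1$ is counted by the same $c$-coefficients (permutation invariance), so the two factors of $F(w)$ contribute a product $c_{i_1,\ldots,i_n;j}\cdot c_{i_1,\ldots,i_n;j^{-1}}$. Proposition \ref{c}, bullet 3, then collapses the weighted sum $\sum_j|\Omega(G,j)|\,c_{i_1,\ldots,i_n;j}\,c_{i_1,\ldots,i_n;j^{-1}}$ into the single coefficient $c_{i_1,\ldots,i_n,i_1,\ldots,i_n;1}(G)$, and the orbit–stabilizer identity $|\Omega(G,i_k)|\cdot|C_G(g_k)|=|G|$ produces the normalization $|G|^n/\prod_k|\Omega(G,i_k)|$ in the stated denominator.

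For the odd case, the analogous substitution isolates the middle element $s:=a_{n+1}$ and rewrites the equation as $\Phi(y)=s\,\Phi(x')\,s$, where $x'$ is the reverse of the first half. The total count becomes $\sum_{s\in G}\sum_{w\in G}F(w)\,F(s^{-1}ws^{-1})$; conditioning on the conjugacy class $j$ of $s$, the central ``sandwich'' $s\,\Phi(x')\,s$ forces its middle into $\Omega(G,j^2)$, producing the factor $|\Omega(G,j^2)|$ in the numerator and replacing the subscript $1$ by $j^2$ in the $c$-count. The main obstacle is the conjugacy-class bookkeeping: one must justify the decoupling of the two factors of $F(w)$ via Proposition \ref{c}, bullet 1, and carefully track the powers of $|G|$ and $|\Omega(G,\cdot)|$ that combine via orbit–stabilizer to produce the precise denominators appearing in the two formulas.
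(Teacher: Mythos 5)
The paper does not actually supply a proof of this proposition --- it only remarks that the argument is ``very similar'' to the one for $\Pr^{2k}(G)$ in Section 5 of \cite{CGLS} --- so your proposal must stand on its own, and as written it has a genuine gap at its central step. The opening reduction is correct: with $\Phi(z)=(z_1\cdots z_n)(z_n\cdots z_1)$ the defining equation becomes $\Phi(\tilde x)=\Phi(y)$ (for the reversed first half $\tilde x$), hence $|G|^{2n}\Pr^{-2n}(G)=\sum_{w}F(w)^2$. The problem is the evaluation of $\sum_{w}F(w)^2$. For a single tuple $z$ the forward product $u=z_1\cdots z_n$ and the reverse product $v=z_n\cdots z_1$ are \emph{jointly} determined by $z$; the number of tuples with prescribed classes $(i_1,\dots,i_n)$ realizing prescribed values of both $u$ and $v$ is not a product of two $c$-coefficients, and the permutation invariance in Proposition \ref{c-i-j} controls only the two marginal counts, never the joint one (already for $n=1$, $F(w)$ is the number of square roots of $w$, which is not a $c$-coefficient). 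Moreover $v$ need not lie in $\Omega(G,j^{-1})$ when $u\in\Omega(G,j)$, so the pairing $c_{i_1,\dots,i_n;j}\cdot c_{i_1,\dots,i_n;j^{-1}}$ is unexplained, and expanding $F(w)^2=\bigl(\sum_{\vec{\imath}}F_{\vec{\imath}}(w)\bigr)^2$ produces off-diagonal terms $F_{\vec{\imath}}(w)F_{\vec{\imath}'}(w)$ with $\vec{\imath}\neq\vec{\imath}'$ that your class-diagonal formula silently discards.

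What actually has to be proved is the aggregate identity $\sum_{w}F(w)^2=\sum_{i_1,\dots,i_n}\prod_{k}|C_G(g_{i_k})|\cdot c_{i_1,\dots,i_n,i_1,\dots,i_n;1}(G)$, and the centralizer factors point to the missing idea: one must introduce conjugating elements $b_1,\dots,b_n$ (each contributing $|C_G(g_{i_k})|$ choices, by orbit--stabilizer applied to the conjugation action) so as to replace the two \emph{correlated} palindromic factorizations by a genuinely free factorization of $1$ into $2n$ class-constrained factors --- i.e.\ the $\Stab_n$/conjugate-tuple mechanism of \cite{CGLS}. Your sketch never introduces these conjugators; the factors $|C_G(g_{i_k})|$ are said to ``come from orbit--stabilizer'' without any orbit being acted on. Indeed you flag the ``decoupling of the two factors of $F(w)$'' as the main obstacle, but that decoupling is false pointwise and is precisely the content of the proposition, so the proposal restates the problem rather than solving it. The same difficulty recurs, unaddressed, in the odd case with the sandwich $s\,\Phi(x')\,s$ and the claimed emergence of $\Omega(G,j^2)$.
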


\begin{remark}
Since the proof of Proposition \ref{pr-minus-formula} is based on very similar
arguments as the proof of the formula for $\Pr^{2k}(G)$ in Section 5 of
\cite{CGLS}, we leave the proof of the proposition for the reader.
\end{remark}

\begin{remark}
Since ${\Pr}^{2}(G)=\frac{c(G)}{|G|}$, Proposition \ref{pr-1} implies that
${\Pr}^{-2}(G)$ equals to ${\Pr}^{2}(G)$ if and only if every conjugacy class
of a finite group $G$ is a 'real conjugacy class', which holds if and only if
$G$ is a finite ambivalent group. Therefore, ${\Pr}^{k}(G)$ is not necessarily
equal to ${\Pr}^{-k}(G)$ in general.
\end{remark}

From Proposition \ref{pr-minus-formula}, we conclude the following corollary,
which classifies the cases, where ${\Pr}^{k}(G)={\Pr}^{-k}(G)$.

\begin{corollary}
\label{ambivalent} For every integer $k\geq1$,
\[
{\Pr}^{2k}(G)={\Pr}^{-2k}(G)
\]
if and only if $G$ is an ambivalent group, which means that every element
$g\in G$ is conjugate to its inverse $g^{-1}\in G$.
\end{corollary}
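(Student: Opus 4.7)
My plan is to compare the explicit formula of Proposition \ref{pr-minus-formula} with its CGLS counterpart for $\Pr^{2k}(G)$, which the remark preceding the corollary identifies as structurally parallel. First I apply the third bullet of Proposition \ref{c} (with $k_t = i_t$) to the numerator $c_{i_1,\ldots,i_k,i_1,\ldots,i_k;1}(G)$ appearing in Proposition \ref{pr-minus-formula}, obtaining
\[
\Pr^{-2k}(G) = \sum_{i_1,\ldots,i_k,j=1}^{c(G)} \frac{|\Omega(G,j)| \cdot c_{i_1,\ldots,i_k;j}(G) \cdot c_{i_1,\ldots,i_k;j^{-1}}(G)}{|\Omega(G,i_1)| \cdots |\Omega(G,i_k)| \cdot |G|^k}.
\]
The analogous formula for $\Pr^{2k}(G)$ from \cite{CGLS} has the same denominator and the same outer structure, but with a second factor of $c_{i_1,\ldots,i_k;j}(G)$ in place of $c_{i_1,\ldots,i_k;j^{-1}}(G)$. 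One sanity check: at $k = 1$ the two expressions collapse (respectively) to $c(G)/|G|$ and $rc(G)/|G|$, in agreement with Proposition \ref{pr-1} and the known value of $\Pr^{2}(G)$.

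The decisive step is to pair the summand indexed by $j$ with the one indexed by $j^{-1}$ in $\Pr^{2k}(G) - \Pr^{-2k}(G)$. Using $|\Omega(G,j)| = |\Omega(G,j^{-1})|$, for each fixed $\vec{i} = (i_1,\ldots,i_k)$ the unordered pair $\{j, j^{-1}\}$ with $j \neq j^{-1}$ contributes exactly
\[
\frac{|\Omega(G,j)|}{|\Omega(G,i_1)| \cdots |\Omega(G,i_k)| \cdot |G|^k} \left( c_{i_1,\ldots,i_k;j}(G) - c_{i_1,\ldots,i_k;j^{-1}}(G) \right)^2,
\]
while real classes ($j = j^{-1}$) contribute zero automatically. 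Hence $\Pr^{2k}(G) \geq \Pr^{-2k}(G)$ always, with equality if and only if $c_{\vec{i};j}(G) = c_{\vec{i};j^{-1}}(G)$ for every $\vec{i}$ and every $j$.

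The corollary now follows from Proposition \ref{ambivalent-c}. For the forward implication at a fixed $k$, the case $k = 1$ is immediate, since equality reduces to $c(G) = rc(G)$ via Proposition \ref{pr-1}; for $k \geq 2$ I would specialize $\vec{i} = (j, 1, \ldots, 1)$, and since $i_2 = \cdots = i_k = 1$ pins those factors to the identity one computes $c_{j,1,\ldots,1;j}(G) = 1$ while $c_{j,1,\ldots,1;j^{-1}}(G) = 0$ whenever $\Omega(G,j) \neq \Omega(G,j^{-1})$, forcing every class of $G$ to be real. The main obstacle is spotting the perfect-square packaging of the paired difference: once this rigidity is visible, the argument reduces to routine bookkeeping and a single appeal to Proposition \ref{ambivalent-c}.
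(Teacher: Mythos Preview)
Your proof is correct and, in fact, proves more than the paper does. Both arguments begin identically: write $\Pr^{-2k}(G)$ via Proposition \ref{pr-minus-formula} and the third bullet of Proposition \ref{c}, and compare with the CGLS formula for $\Pr^{2k}(G)$. For the ``if'' direction both simply invoke Proposition \ref{ambivalent-c}.

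The divergence is in the ``only if'' direction. The paper reads the corollary as ``(equality holds for \emph{all} $k$) $\Leftrightarrow$ $G$ ambivalent'' and disposes of this by specializing to $k=1$, where Proposition \ref{pr-1} gives $rc(G)=c(G)$ directly. Your perfect-square pairing of the $j$ and $j^{-1}$ summands instead establishes the pointwise inequality $\Pr^{2k}(G)\ge\Pr^{-2k}(G)$ for every $k$, with equality at a \emph{single} $k$ forcing $c_{\vec{i};j}=c_{\vec{i};j^{-1}}$ for all $\vec{i}$ of length $k$; your specialization $\vec{i}=(j,1,\ldots,1)$ then extracts ambivalence. So you prove the sharper statement ``for each fixed $k$, equality $\Leftrightarrow$ ambivalent'' and obtain the inequality as a byproduct---neither of which appears in the paper. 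The paper's route is shorter precisely because it only needs the $k=1$ instance, whereas your argument is uniform in $k$ and yields genuinely more information.
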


\begin{proof}
By \cite{CGLS},
\[
{\Pr}^{2n}(G)=\Pr(a_{1}a_{2}\cdots a_{2n-1}a_{2n}=a_{2n}a_{2n-1}\cdots
a_{2}a_{1})=
\]
\[
=\sum\limits_{i_{1},i_{2},...,i_{n},j=1}^{c(G)}\frac{\left\vert
\Omega(G, j)\right\vert \cdot c_{i_{1},i_{2},...,i_{n};j}^{2}(G)}{\left\vert
\Omega(G, i_{1})\right\vert \cdot\left\vert \Omega(G, i_{2})\right\vert
\cdot\cdots\cdot\left\vert \Omega(G, i_{n})\right\vert \cdot\left\vert
G\right\vert ^{n}}
\]
and by Propositions \ref{c}, \ref{pr-minus-formula}%

\[
{\Pr}^{-2n}(G)=\Pr\left(a_{1}a_{2}\cdots a_{2n-1}a_{2n}=a_{1}^{-1}a_{2}%
^{-1}\cdots a_{2n-1}^{-1}a_{2n}^{-1}\right)=
\]
\[
=\sum\limits_{i_{1},i_{2},...,i_{n},j=1}^{c(G)}\frac{\left\vert
\Omega(G, j)\right\vert \cdot c_{i_{1},i_{2},...,i_{n};j}(G)\cdot c_{i_{1}%
,i_{2},...,i_{n};j^{-1}}(G)}{\left\vert \Omega(G, i_{1})\right\vert
\cdot\left\vert \Omega(G, i_{2})\right\vert \cdot\cdots\cdot\left\vert
\Omega(G, i_{n})\right\vert \cdot\left\vert G\right\vert ^{n}}=
\]%
\[
=\sum\limits_{i_{1},i_{2},...,i_{n},j=1}^{c(G)}\frac{\left\vert
\Omega(G, j)\right\vert \cdot c_{i_{1},i_{2},...,i_{n};j}(G)\cdot c_{i_{1}^{-1}%
,i_{2}^{-1},...,i_{n}^{-1};j}(G)}{\left\vert \Omega(G, i_{1})\right\vert
\cdot\left\vert \Omega(G, i_{2})\right\vert \cdot\cdots\cdot\left\vert
\Omega(G, i_{n})\right\vert \cdot\left\vert G\right\vert ^{n}}.
\]

Therefore, if $G$ is an ambivalent group, then by Proposition \ref{ambivalent-c}, $c_{i_{1},i_{2},...,i_{n};j}(G)=c_{i_{1},i_{2},...,i_{n}%
;j^{-1}}(G)$ for every conjugacy class $\Omega(G, j)$. Hence, ${\Pr}^{2k}(G)={\Pr}^{-2k}(G)$
for every positive integer $k$.
Now, we assume ${\Pr}^{-2k}(G)={\Pr}^{2k}(G)$ for every positive integer $k$.
Then ${\Pr}^{2}(G)={\Pr}^{-2}(G)$ in particular. Since by Proposition \ref{pr-1}, ${\Pr}^{-2}(G)=\frac{rc(G)}{|G|}$ and
by \cite{G, CGLS}, ${\Pr}^{2}(G)=\frac{k(G)}{|G|}$, we have $k(G)=rc(G)$, which implies that the
group $G$ is ambivalent.
\end{proof}

\begin{lemma}
\label{squares_product}
Let $G$ be a finite group, then the following holds:
\[
{\Pr}^{-k}(G)=\Pr\left(a_{1}^{2}a_{2}^{2}\cdots a_{k}^{2}=1\right).
\]

\end{lemma}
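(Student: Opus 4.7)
The plan is to reduce the claim, via one clean change of variables, to a telescoping identity that turns the equation $a_1\cdots a_k=a_1^{-1}\cdots a_k^{-1}$ into $d_1^2 d_2^2\cdots d_k^2=1$ on the nose.

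First, I would simplify the defining equation for ${\Pr}^{-k}(G)$: the variables $a_{k+1},\dots,a_n$ appear identically on both sides, so they cancel, and the lemma reduces to proving
\[
\Pr\bigl(a_1 a_2\cdots a_k = a_1^{-1}a_2^{-1}\cdots a_k^{-1}\bigr) \;=\; \Pr\bigl(a_1^2 a_2^2\cdots a_k^2 = 1\bigr).
\]
Then I would introduce the change of variables $c_i:=a_1a_2\cdots a_i$ (with $c_0=1$) and $d_i:=c_i c_{i-1}^{-1}$, so that $d_i=(a_1\cdots a_{i-1})\,a_i\,(a_1\cdots a_{i-1})^{-1}$ and the inverse is given by $c_i=d_i d_{i-1}\cdots d_1$. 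Both steps are bijections $G^k\to G^k$, hence the composite $(a_1,\dots,a_k)\mapsto(d_1,\dots,d_k)$ is a measure-preserving bijection of $G^k$.

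Using $a_i^{-1}=c_i^{-1}c_{i-1}$, a short induction on $k$ would establish the key identity
\[
a_1^{-1}a_2^{-1}\cdots a_k^{-1} \;=\; d_1^{-2}d_2^{-2}\cdots d_{k-1}^{-2}\cdot d_k^{-1}\cdot d_{k-1}d_{k-2}\cdots d_1.
\]
The inductive step hinges on the telescoping cancellation $(d_{k-2}\cdots d_1)(d_1^{-1}\cdots d_{k-2}^{-1})=1$, which collapses the extra factors produced when passing from $k-1$ to $k$. Combined with the obvious $a_1 a_2\cdots a_k = c_k = d_k d_{k-1}\cdots d_1$, the equation $a_1\cdots a_k=a_1^{-1}\cdots a_k^{-1}$ becomes
\[
d_k d_{k-1}\cdots d_1 \;=\; d_1^{-2}\cdots d_{k-1}^{-2}\cdot d_k^{-1}\cdot d_{k-1}\cdots d_1,
\]
and cancelling the common right tail $d_{k-1}\cdots d_1$ and moving $d_k^{-1}$ across yields exactly $d_1^2 d_2^2\cdots d_k^2=1$.

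Since the substitution is a bijection of $G^k$, it carries the uniform measure to itself, so the two events have the same probability, which proves the lemma. The main obstacle I anticipate is guessing the right substitution; with the partial-product/telescoping substitution in hand, the verification is routine algebra, and the case $k=1$ (which gives ${\Pr}^{-1}(G)=\Pr(a_1^2=1)$) serves as the trivial base of the induction, consistent with Proposition \ref{pr-1}.
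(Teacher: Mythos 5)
Your argument is correct and is essentially the paper's own proof: both rewrite the equation through a triangular change of variables that conjugates each $a_i$ by a partial product, turning the word into a product of $k$ squares (the paper conjugates by suffix products $a_{k-i+2}\cdots a_k$, setting $b_{i}=a_{k-i+1}^{\,a_{k-i+2}\cdots a_{k}}$, where you conjugate by prefix products $c_{i-1}$). The one small nitpick is that your final cancellation actually produces $d_k^2 d_{k-1}^2\cdots d_1^2=1$ rather than $d_1^2 d_2^2\cdots d_k^2=1$; the two events correspond under the further bijection reversing the index order, so the probability is unchanged and the lemma follows.
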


\begin{proof}
We have

\[
\Pr(a_{1}a_{2}\dots a_{k}=a_{1}^{-1}a_{2}^{-1}\cdots a_{k}^{-1}),
\]
which is equivalent to

\[
\Pr\left(a_{k}^{2} ~ (a_{k}^{-1}a_{k-1}^{2}a_{k}) ~ (a_{k}^{-1}a_{k-1}^{-1}%
a_{k-2}^{2}a_{k-1}a_{k}) ~ \cdots~ (a_{k}^{-1}a_{k-1}^{-1}\cdots a_{2}%
^{-1}a_{1}^{2}a_{2}\cdots a_{k-1}a_{k})=1\right).
\]
Then by substituting $b_{i}=a_{k-i+1}^{\prod_{j=k-i+2}%
^{k}a_{j}}$ for every $1\leq i\leq k$, we have
\[
\Pr(a_{1}a_{2}\dots a_{k}=a_{1}^{-1}a_{2}^{-1}\cdots a_{k}^{-1})=\Pr(b_{1}^{2}b_{2}^{2}\cdots b_{k}^{2}=1).
\]

\end{proof}

From Lemma \ref{squares_product}, we conclude the following corollaries:

\begin{corollary}
\label{odd-order} A finite group $G$ has an odd order, if and only if
\[
{\Pr}^{-k}(G)=\frac{1}{|G|}%
\]
for every $k\in\mathbb{N}$.
\end{corollary}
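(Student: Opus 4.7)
The plan is to use Lemma \ref{squares_product} directly, reducing $\Pr^{-k}(G)$ to the probability $\Pr(a_1^2 a_2^2 \cdots a_k^2 = 1)$ and then analyze each direction separately.

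For the forward direction, assume $|G|$ is odd. Since $\gcd(2,|G|) = 1$, the squaring map $a \mapsto a^2$ is a bijection of $G$ onto itself (it is a homomorphism on the cyclic subgroup generated by $a$, but more simply, $a^2 = b^2$ implies $a = b$ because no nonidentity element has order $2$). Consequently, if $(a_1,\dots,a_k)$ ranges uniformly over $G^k$, then so does $(a_1^2,\dots,a_k^2)$. Writing $b_i = a_i^2$, we obtain
\[
\Pr(a_1^2 a_2^2 \cdots a_k^2 = 1) = \Pr(b_1 b_2 \cdots b_k = 1) = \frac{1}{|G|},
\]
since for any choice of $b_1,\dots,b_{k-1}$ there is exactly one value of $b_k$ making the product trivial. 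Combined with Lemma \ref{squares_product}, this gives $\Pr^{-k}(G) = 1/|G|$ for every $k \in \mathbb{N}$.

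For the reverse direction, it suffices to take $k = 1$. By Proposition \ref{pr-1}, $\Pr^{-1}(G) = \inv(G)/|G|$, so the hypothesis $\Pr^{-1}(G) = 1/|G|$ forces $\inv(G) = 1$. Hence the identity is the unique element of $G$ satisfying $b^2 = 1$, which means $G$ has no element of order $2$. By Cauchy's theorem, $|G|$ cannot be even, so $|G|$ is odd.

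The only mildly subtle point is the forward direction, where one must justify that squaring is a bijection in a group of odd order; this is standard but worth stating cleanly. The converse is essentially immediate from $\Pr^{-1}(G) = \inv(G)/|G|$ together with Cauchy's theorem, so no real obstacle arises there.
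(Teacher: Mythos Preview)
Your proof is correct and follows essentially the same route as the paper: both directions use Lemma~\ref{squares_product}, the forward direction exploits that squaring is a bijection when $|G|$ is odd, and the converse reduces to $k=1$ via Proposition~\ref{pr-1} and the existence of an involution in even-order groups (you cite Cauchy, the paper cites Sylow). One small caution: your parenthetical justification ``$a^2=b^2$ implies $a=b$ because no nonidentity element has order~$2$'' is not a valid direct argument in a non-abelian group; the clean reason is the $\gcd(2,|G|)=1$ statement you already made, so just drop the parenthetical.
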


\begin{proof}
Let $G$ be a finite group. Then for every $g\in G$ there exists only one
$g^{\prime}\in G$ such that ${g^{\prime}}^{2}=g$, if and only if the order of
$G$ is odd. By Lemma \ref{squares_product}, \newline${\Pr}^{-k}(G)=\Pr\left(a_{1}^{2}a_{2}^{2}\cdots a_{k}^{2}=1\right)$. Therefore, in case of group $G$ the
order of which an odd integer, ${\Pr}^{-k}(G)=\Pr(a^{\prime}_{1}a^{\prime}_{2}\cdots
a^{\prime}_{k}=1)$. Hence, obviously, ${\Pr}^{-k}(G)=\frac{1}{|G|}$ in case of
odd order $G$. In case of $G$ the order of which is an even integer, by Sylow
Theorem, the group $G$ contains at least one non-trivial involution (i.e., an
element of order $2$ in $G$). Thus, by Proposition \ref{pr-1},
\[
{\Pr}^{-1}(G)=\frac{\inv(G)}{|G|}>\frac{1}{|G|}.
\]

\end{proof}

\begin{corollary}
\label{abelian} If $G$ is a finite abelian group, then
\[
{\Pr}^{-k}(G)={\Pr}^{-1}(G)=\frac{\inv(G)}{|G|},
\]
for every $k\in\mathbb{N}$.
\end{corollary}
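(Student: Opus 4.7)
The plan is to reduce everything to Lemma \ref{squares_product} and then exploit commutativity.

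First I would invoke Lemma \ref{squares_product} to rewrite
\[
{\Pr}^{-k}(G)=\Pr\bigl(a_{1}^{2}a_{2}^{2}\cdots a_{k}^{2}=1\bigr).
\]
Since $G$ is abelian, the factors $a_{i}^{2}$ commute with one another, so the product telescopes:
\[
a_{1}^{2}a_{2}^{2}\cdots a_{k}^{2}=(a_{1}a_{2}\cdots a_{k})^{2}.
\]
Thus ${\Pr}^{-k}(G)=\Pr\bigl((a_{1}\cdots a_{k})^{2}=1\bigr)$.

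Next I would perform the change of variables $b=a_{1}a_{2}\cdots a_{k}$, keeping $a_{1},\ldots,a_{k-1}$ arbitrary and setting $a_{k}=a_{k-1}^{-1}\cdots a_{1}^{-1}b$. This bijection between $k$-tuples and pairs (first $k-1$ entries, product $b$) shows that for each fixed $b\in G$ there are exactly $|G|^{k-1}$ tuples $(a_{1},\ldots,a_{k})\in G^{k}$ whose product equals $b$. Hence
\[
{\Pr}^{-k}(G)=\frac{|G|^{k-1}\cdot|\{b\in G:b^{2}=1\}|}{|G|^{k}}=\frac{\inv(G)}{|G|}.
\]
The $k=1$ instance is exactly ${\Pr}^{-1}(G)=\frac{\inv(G)}{|G|}$ from Proposition \ref{pr-1}, so the chain of equalities in the statement follows.

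There is essentially no obstacle here: the only point requiring a touch of care is the rearrangement $a_{1}^{2}\cdots a_{k}^{2}=(a_{1}\cdots a_{k})^{2}$, which uses commutativity in a crucial way and fails in general, explaining why the conclusion is special to abelian $G$. Everything else is a routine counting argument based on the uniform distribution of the product map $G^{k}\to G$.
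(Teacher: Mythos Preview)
Your proof is correct and follows essentially the same route as the paper: invoke Lemma~\ref{squares_product}, use commutativity to rewrite $a_{1}^{2}\cdots a_{k}^{2}=(a_{1}\cdots a_{k})^{2}$, and then observe that $\Pr((a_{1}\cdots a_{k})^{2}=1)=\Pr(a^{2}=1)=\inv(G)/|G|$. The only difference is that you spell out the counting argument behind this last equality via the change of variables, whereas the paper states it in one line.
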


\begin{proof}
By Lemma \ref{squares_product}, ~${\Pr}^{-k}(G)=\Pr\left(a_{1}^{2}a_{2}^{2}\cdots a_{k}^{2}=1\right)$. Since $G$ is an abelian group, $a_{1}^{2}a_{2}^{2}\cdots a_{k}^{2}=\left(a_{1}a_{2}\cdots a_{k}\right)^{2}$. Thus,
\[
{\Pr}^{-k}(G)=\Pr\left(\left(a_{1}a_{2}\cdots a_{k}\right)^{2}=1\right)=\Pr(a^{2}=1)={\Pr}^{-1}(G)=\frac{\inv(G)}{|G|}.
\]

\end{proof}

\begin{corollary}
\label{direct-sum} Let $G=\bigoplus_{i=1}^{m}G_{i}$ (i.e., $G$ is a direct sum
of $m$ groups $G_{i}$ for $1\leq i\leq m$), then
\[
{\Pr}^{-k}(G)=\prod_{i=1}^{m}{\Pr}^{-k}(G_{i}),
\]
for every $k\in\mathbb{N}$.
\end{corollary}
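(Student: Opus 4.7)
The plan is to apply Lemma \ref{squares_product} to convert the statement into a claim about the probability that a product of squares equals the identity, and then exploit the fact that both multiplication and squaring in a direct sum are performed componentwise.

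First, I would observe that by Lemma \ref{squares_product},
\[
{\Pr}^{-k}(G) = \Pr\left(a_{1}^{2}a_{2}^{2}\cdots a_{k}^{2}=1_{G}\right),
\]
and similarly ${\Pr}^{-k}(G_{i}) = \Pr\left(a_{1}^{2}a_{2}^{2}\cdots a_{k}^{2}=1_{G_{i}}\right)$ for each $i$. So it suffices to show that the event $\{a_{1}^{2}\cdots a_{k}^{2}=1_{G}\}$ factors as a product of the analogous events in the $G_{i}$.

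Next, I would use the direct sum structure: each $a_{j}\in G$ is uniquely written as $a_{j}=(a_{j}^{(1)},\ldots,a_{j}^{(m)})$ with $a_{j}^{(i)}\in G_{i}$, squaring acts componentwise, and the group operation in $G$ is componentwise. Hence
\[
a_{1}^{2}a_{2}^{2}\cdots a_{k}^{2}=\left((a_{1}^{(1)})^{2}\cdots (a_{k}^{(1)})^{2},\; \ldots,\; (a_{1}^{(m)})^{2}\cdots (a_{k}^{(m)})^{2}\right),
\]
so this equals $1_{G}$ if and only if $(a_{1}^{(i)})^{2}\cdots(a_{k}^{(i)})^{2}=1_{G_{i}}$ simultaneously for all $1\leq i\leq m$. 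Since choosing $a_{1},\ldots,a_{k}$ uniformly at random from $G$ is equivalent to choosing each component $a_{j}^{(i)}$ independently and uniformly from $G_{i}$, the $m$ events in the different components are mutually independent.

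Finally, I would conclude
\[
{\Pr}^{-k}(G)=\Pr\left(a_{1}^{2}\cdots a_{k}^{2}=1_{G}\right)=\prod_{i=1}^{m}\Pr\left((a_{1}^{(i)})^{2}\cdots (a_{k}^{(i)})^{2}=1_{G_{i}}\right)=\prod_{i=1}^{m}{\Pr}^{-k}(G_{i}).
\]
There is no substantive obstacle here — the argument is essentially bookkeeping once Lemma \ref{squares_product} is invoked — but the one point that must be stated carefully is the independence of the projections of a uniformly chosen element of $G=\bigoplus_{i=1}^{m}G_{i}$ onto the factors, which is what allows the joint probability to split as a product.
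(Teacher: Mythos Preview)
Your proposal is correct and follows essentially the same approach as the paper: invoke Lemma \ref{squares_product} to rewrite ${\Pr}^{-k}(G)$ as $\Pr(a_{1}^{2}\cdots a_{k}^{2}=1)$, decompose each $a_{j}$ componentwise using the direct sum structure, and factor the resulting probability. Your version is in fact more explicit about the independence of the component projections, a point the paper's proof leaves implicit.
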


\begin{proof}
By Lemma \ref{squares_product}, ~${\Pr}^{-k}(G)=\Pr\left(a_{1}^{2}a_{2}^{2}\cdots a_{k}^{2}=1\right)$. Since $G=\bigoplus_{i=1}^{m}G_{i}$, it is satisfied that
$a_{j}=\prod_{i=1}^{m}b_{j,i}$ such that $b_{j,i}\in G_{i}$ for every $1\leq
j\leq k$. Thus we conclude:
\[
{\Pr}^{-k}(G)=\Pr\left(a_{1}^{2}a_{2}^{2}\cdots a_{k}^{2}=1\right)=\Pr(\prod_{i=1}^{m}\prod_{j=1}^{k}b_{j,i}^{2}=1)=\prod_{i=1}^{m}{\Pr}^{-k}(G_{i}).
\]

\end{proof}

\section{Breakpoint graphs}

\label{hult}

Now, we recall the definition of the breakpoint graph for the
signed permutations of $B_{n}$, as defined in \cite{APA}. The breakpoint graph
is a generalization of the cycle graph for permutations
of $S_{n}$, which is defined in \cite{Hultman1999}, \cite{DL}.

\begin{definition}
\label{hultman} Let $\pi\in B_{n}$. Consider the set $H_{n}$ of $2n+2$
vertices named by
\[
H_{n}=\{+0, +1, ..., +n, -0, -1, ..., -n\}.
\]
\begin{remark}
The elements $+0$ and $-0$ are two distinct elements of $H_{n}$ (which are denoted $0^{h}$ and $0^{t}$ in \cite{APA}).
\end{remark}

Defining two types of edges.

\begin{itemize}
\item There are gray-edges connecting
\[
[i]\leftrightsquigarrow[-(i+1)]
\]
for every $i\in H_{n}$;

\item There are black-edges connecting
\[
[\pi_{i}]\longleftrightarrow[\pi_{-(i+1)}]
\]
for every $i\in H_{n}$,
\end{itemize}

such that in the specific case, where $i$ or $i+1$ equals to $(+0)$ or $(-0)$,
$i+1$ is defined as follows:

\begin{itemize}
\item $(-1)+1$ is considered to be $(-0)$;

\item $(-0)+1$ is considered to be $(-n)$;

\item $(+n)+1$ is considered to be $(+0)$.
\end{itemize}

Then considering the breakpoint graph $\Gr(\pi)$, the edges of which are two-colored
containing the black and the gray non-oriented edges.
\end{definition}

\begin{remark}
Notice, both the gray and the black edges are non-oriented in the breakpoint
graph $\Gr(\pi)$ for $\pi\in B_{n}$, in contrast to the edges in the cycle
graph $\mathcal{G}(\pi)$ for $\pi\in S_{n}$ as described in \cite{BP}, \cite{DL}, and
\cite{CGLS}.
\end{remark}

\begin{definition}
\label{number-alternating-cycle} Let $s(\pi)$ be the number of the
cycles of $\Gr(\pi)$, where in every cycle black edge is followed by gray edge, and
gray edge is followed by black edge.
\end{definition}

\begin{example}
Let $\pi=\langle3\;\;-1\;\;2\;\;4\rangle$. Then $\pi$ contains two
cycles, in the following way:
\[
[+0]\leftrightsquigarrow[-1]\longleftrightarrow[-2]\leftrightsquigarrow
[+1]\longleftrightarrow[+3]\leftrightsquigarrow[-4]\longleftrightarrow
[+2]\leftrightsquigarrow[-3]\longleftrightarrow[+0]
\]

\begin{center}
and
\end{center}

\[
[+4]\leftrightsquigarrow[-0]\longleftrightarrow[+4].
\]

Therefore, $s(\pi)=2$.
\end{example}

\begin{remark}
Our notation of the vertices of $\Gr(\pi)$ is slightly different from the
notation in \cite{APA}, where for every integer $0\leq i\leq n$,

\begin{itemize}
\item we denote by ~$+i$ ~the vertex denoted by ~$i^{h}$ ~in \cite{APA};

\item we denote by ~$-i$ ~the vertex denoted by ~$i^{t}$ ~in \cite{APA}.
\end{itemize}
\end{remark}

\begin{proposition}
\label{i-k} Let $I^{(k)}$ be a signed permutation of $B_{n}$ as defined in
Definition \ref{ik}. Then $s(I^{(-k)})=n-k+1$ (i.e., $\Gr(I^{(-k)})$ contains
$n-k+1$ alternating cycles).
\end{proposition}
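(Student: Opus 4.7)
The plan is to read off the cycle structure of $\Gr(I^{(-k)})$ directly from Definition~\ref{hultman}. First, I would list the $n+1$ gray edges, which depend only on $n$: they are $\{[+j],[-(j+1)]\}$ for $0\le j\le n-1$, together with $\{[+n],[-0]\}$. Next I would compute the $n+1$ black edges of $\pi=I^{(-k)}$ from the rule $[\pi_i]\longleftrightarrow[\pi_{-(i+1)}]$, using $\pi_j=-j$ for $|j|\le k$ and $\pi_j=j$ for $|j|>k$. A short case analysis on $i\in\{+0,+1,\ldots,+n\}$, read with the boundary conventions of Definition~\ref{hultman} (so that $-(k+1)$ means $[-0]$ when $k=n$), produces five families of black edges:
\[
\{[+0],[+1]\},\ \{[-j],[+(j+1)]\}\ (1\le j\le k-1),\ \{[-k],[-(k+1)]\},
\]
\[
\{[+j],[-(j+1)]\}\ (k+1\le j\le n-1),\ \{[+n],[-0]\}.
\]

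From this explicit list the ``trivial'' alternating cycles are immediate: for each $j$ with $k+1\le j\le n-1$ the black edge $\{[+j],[-(j+1)]\}$ coincides with the gray edge of the same name, and $\{[+n],[-0]\}$ is likewise both black and gray. These pairs of vertices therefore form $n-k$ alternating cycles of length two, and no vertex outside $\{[\pm 0],[\pm 1],\ldots,[\pm k],[-(k+1)]\}$ appears in any other cycle.

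What remains is to trace the unique alternating cycle containing $[+0]$. Alternating colours, I would show that it ascends as
\[
[+0]\leftrightsquigarrow[-1]\longleftrightarrow[+2]\leftrightsquigarrow[-3]\longleftrightarrow[+4]\leftrightsquigarrow\cdots,
\]
reaching $[+k]$ when $k$ is even and $[-k]$ when $k$ is odd; the ``turnaround'' edge $\{[-k],[-(k+1)]\}$ is then traversed, after which the walk descends through the mirror-image sequence and closes via the black edge $\{[+0],[+1]\}$. A brief induction on $k$ (or a direct parity split) confirms that this closed walk is alternating and visits precisely the $2k+2$ vertices $\{[+0]\}\cup\{[\pm j]:1\le j\le k\}\cup\{[-(k+1)]\}$. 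Adding this single long cycle to the $n-k$ length-two cycles above yields $s(I^{(-k)})=(n-k)+1=n-k+1$.

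The main obstacle will be the turnaround step: one must verify that after using $\{[-k],[-(k+1)]\}$ all required black edges remain available on the descending portion, that no edge is reused, and that the closing edge $\{[+0],[+1]\}$ returns to the starting vertex for both parities of $k$. Once the edge list is on the table this becomes a routine but careful bookkeeping check, and no deeper combinatorial input is needed.
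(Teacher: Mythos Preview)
Your proposal is correct and follows essentially the same approach as the paper's own proof: both read the cycle structure of $\Gr(I^{(-k)})$ directly from Definition~\ref{hultman}, identifying the $n-k$ two-vertex alternating cycles on $\{[+j],[-(j+1)]\}$ for $j>k$ (including $\{[+n],[-0]\}$) and a single long cycle through the remaining vertices. Your write-up is in fact more careful than the paper's terse sketch---you correctly count $2k+2$ vertices in the long cycle (including $[-(k+1)]$), whereas the paper's proof says $2k+1$ and omits $-(k+1)$ from its list.
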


\begin{proof}
By the definition of $I^{(k)}$, and the definition of the alternating cycle,
we get $n-k$ alternating cycles, where each one contains two vertices $+j$ and
$-j-1$ for every $k+1\leq j\leq n$, and one more alternating cycle, which
contains the remaining $2k+1$ vertices, which are $j$ such that $+0\leq j\leq+k$,
and $-k\leq j\leq-1$.
\end{proof}

\begin{definition}
\label{bullet-bullet} Denote by $\pi^{\ast}$ and by $\pi^{\circ}$ the following elements of $S_{H_{n}}$
\begin{itemize}
\item
$\pi^{\ast}=(\pi_{+n}, ~\pi_{+(n-1)}, ~\pi_{+(n-2)}, ..., ~\pi_{+0})\cdot(\pi_{-0}, ~\pi_{-1}, ~\pi_{-2}, ..., ~\pi_{-n})$
;

\item
$\pi^{\circ}=\pi^{\ast}\cdot(+0, ~+1, ..., ~+n)\cdot(-n,...,~-1, ~-0)$.
\end{itemize}

\end{definition}

\begin{proposition}
\label{bullet-bullet-m} Let $\pi\in B_{n}$, then there are $m(\pi)$ elements
$k$ such that $\sign\left(  k\right)  =(-)$ in the same cycle to ~$+0$ ~of
$\pi^{\ast}$.
\end{proposition}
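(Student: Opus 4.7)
The plan is to unpack the cycle decomposition of $\pi^*$ given in Definition \ref{bullet-bullet} and directly count the negatively signed vertices in the cycle containing $+0$. This is essentially a bookkeeping argument, so the main work is fixing notation carefully.

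First I would confirm that the two cycles displayed in the definition of $\pi^*$ are disjoint, so that the expression really is the cycle decomposition of $\pi^*$ as an element of $S_{H_n}$. Adopting the natural convention $\pi_{+0} = +0$ and $\pi_{-0} = -0$ (implicit in Definition \ref{hultman}, where $\pi$ acts on all of $H_n$), the first cycle becomes
\[
C_1 = (\pi_{+n},\, \pi_{+(n-1)},\, \ldots,\, \pi_{+1},\, +0),
\]
whose underlying set is $\{+0\} \cup \{\pi_{+1}, \ldots, \pi_{+n}\}$. Using $\pi_{-i} = -\pi_{+i}$, the second cycle is
\[
C_2 = (-0,\, -\pi_{+1},\, -\pi_{+2},\, \ldots,\, -\pi_{+n}),
\]
with underlying set $\{-0\} \cup \{-\pi_{+1}, \ldots, -\pi_{+n}\}$. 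Because $\pi \in B_n$ is a signed permutation, the multiset $\{|\pi_{+1}|, \ldots, |\pi_{+n}|\}$ equals $\{1, \ldots, n\}$; thus $\{\pi_{+1}, \ldots, \pi_{+n}\}$ picks exactly one of $\{+j, -j\}$ for each $j \in \{1, \ldots, n\}$, so $C_1$ and $C_2$ are disjoint and their union is $H_n$.

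Second, I would note that $+0 \in C_1$ by construction, so the cycle of $\pi^*$ containing $+0$ is $C_1$. The elements $k$ of $C_1$ with $\sign(k) = (-)$ are precisely those $\pi_{+i}$ with $i \in \{1, \ldots, n\}$ and $\sign(\pi_{+i}) = (-)$; the vertex $+0$ itself is not counted because $\sign(+0) = (+)$. By the definition of $\epsilon_i(\pi)$, this is the same as the set of $i$ with $\epsilon_i(\pi) = -1$, which by Definition \ref{m-pi-minus} has cardinality $m(\pi)$. This yields the claim.

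The argument has no real obstacle beyond pinning down the convention $\pi_{+0} = +0$ and $\pi_{-0} = -0$; once that is in place, disjointness of the two cycles and the bijection between negatively signed entries of $C_1$ and indices $i$ with $\epsilon_i(\pi) = -1$ are both immediate from the definitions.
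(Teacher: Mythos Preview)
Your proof is correct and follows essentially the same approach as the paper: identify the cycle of $\pi^{\ast}$ containing $+0$ as $(\pi_{+n},\ldots,\pi_{+1},+0)$ and count the negatively signed entries using Definition~\ref{m-pi-minus}. Your version is more careful than the paper's two-line argument in that you verify the disjointness of the two cycles and make the convention $\pi_{+0}=+0$, $\pi_{-0}=-0$ explicit, but the underlying idea is the same.
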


\begin{proof}
The cycle of $\pi^{\ast}$, where ~$+0$ ~is located has the form
\newline$(\pi_{+n},~\pi_{+(n-1)},~\pi_{+(n-2)},...,~\pi_{+0})$. Hence, by
Corollary \ref{m-pi-minus}, the number of elements with sign $(-)$ in that
cycle equals to $m(\pi)$.
\end{proof}

\begin{corollary}
\label{perm-alternating} Let $\pi\in B_{n}$. Then we have the following
connections between $\pi^{\circ}$ and the breakpoint graph $\Gr(\pi)$:
\begin{itemize}
\item For $x,y\in H_{n}$, ~$\pi_{x}^{\circ}=y$ if and only if the cycle graph
$\Gr(\pi)$ contains \newline$[x]\leftrightsquigarrow\lbrack
-(x+1)]\longleftrightarrow\lbrack y]$;

\item The number of cycles of $\pi^{\circ}$ as a permutation of the $2n+2$
elements of $H_{n}$ equals to $2s(\pi)$.
\end{itemize}
\end{corollary}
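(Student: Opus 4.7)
The plan is to verify the first bullet by a direct calculation, then derive the second bullet from it by an orbit-counting argument. For the first bullet, I would write $\pi^{\circ}=\pi^{\ast}\cdot\sigma$ where $\sigma=(+0,+1,\ldots,+n)\cdot(-n,\ldots,-1,-0)$, and establish three elementary identities before combining them.

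First, I would check that under the wrap-around conventions of Definition \ref{hultman} the permutation $\sigma$ satisfies $\sigma(x)=-(-(x+1))$ for every $x\in H_{n}$, so that $\sigma(x)$ is exactly the negation of the gray neighbor of $x$; this is a four-case verification over $x=+k$ with $0\leq k\leq n-1$, $x=+n$, $x=-k$ with $1\leq k\leq n$, and $x=-0$. Second, I would rewrite the two cycles defining $\pi^{\ast}$ as the single statement $\pi^{\ast}(\pi_{m})=\pi_{m-1}$ for every $m\in H_{n}$, where ``$-1$'' is the convention dual to the stated ``$+1$'' convention (so that $+0-1=+n$ and $-n-1=-0$). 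Third, I would note that from Definition \ref{hultman} the unique black neighbor of any $u=\pi_{j}$ in $\Gr(\pi)$ is $\pi_{-(j+1)}$, since each $i\in H_{n}$ contributes exactly one black edge and each vertex lies in exactly one such. Combining these, setting $m=\pi^{-1}(x+1)$ gives $\pi^{\circ}(x)=\pi^{\ast}(\sigma(x))=\pi^{\ast}(x+1)=\pi_{m-1}$; on the other hand, the gray neighbor $u=-(x+1)=-\pi_{m}=\pi_{-m}$ has black neighbor $\pi_{-(-m+1)}$, which another short case check shows is exactly $\pi_{m-1}$. This yields the first bullet.

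Given the first bullet, each alternating cycle of $\Gr(\pi)$ has an even number $2\ell$ of vertices $v_{0},v_{1},\ldots,v_{2\ell-1}$ with gray edges joining $v_{2i}$ to $v_{2i+1}$ and black edges joining $v_{2i+1}$ to $v_{2i+2}$ (indices mod $2\ell$). By the first bullet, $\pi^{\circ}(v_{2i})=v_{2(i+1)}$, so the even-indexed vertices form a single $\pi^{\circ}$-cycle of length $\ell$. On the other hand, starting from $v_{1}$ the gray edge leads back to $v_{0}$ and the unique black edge out of $v_{0}$ leads to $v_{2\ell-1}$, so $\pi^{\circ}(v_{1})=v_{2\ell-1}$, and iterating shows that the odd-indexed vertices form a second $\pi^{\circ}$-cycle of length $\ell$ that traverses the alternating cycle in the reverse direction. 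Hence each alternating cycle of $\Gr(\pi)$ contributes exactly two cycles of $\pi^{\circ}$, and summing over the $s(\pi)$ alternating cycles yields $2s(\pi)$ cycles of $\pi^{\circ}$ in total.

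The main difficulty is the bookkeeping for the wrap-around conventions of Definition \ref{hultman}. The extensions $\pi_{+0}=+0$, $\pi_{-0}=-0$, together with the rule $\pi_{-i}=-\pi_{i}$, the ``$+1$'' convention, and its dual ``$-1$'' convention, all have to interact consistently; once this is verified case-by-case, the three preliminary identities (and in particular the critical step $\pi_{-(-m+1)}=\pi_{m-1}$) become routine boundary checks instead of algebraic traps.
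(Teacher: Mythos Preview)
The paper states this corollary without proof, so there is no argument to compare against; your direct computation is correct and is exactly the natural way to verify the claim. The three preliminary identities $\sigma(x)=x+1$, $\pi^{\ast}(\pi_{m})=\pi_{m-1}$, and (black neighbour of $\pi_{j}$) $=\pi_{-(j+1)}$ all check out under the wrap-around conventions, and the key identity $-(-m+1)=m-1$ holds in every boundary case, so the first bullet follows. For the second bullet your even/odd splitting of each alternating cycle is fine; you might add one sentence noting that each vertex of $H_{n}$ has gray-degree and black-degree exactly one, so $\Gr(\pi)$ is a disjoint union of alternating even cycles and the $s(\pi)$ cycles genuinely partition $H_{n}$, which guarantees the $2s(\pi)$ resulting $\pi^{\circ}$-cycles are pairwise distinct and exhaust $H_{n}$.
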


\section{Exchange, cyclic, and sign-change operations}

\label{exch-cyc}

First, we recall and generalize the definitions of $\left([x]--[y]\right)$-exchange, and $\left([x]--[y]\right)$-cyclic operations on $\Gr(\pi)$,
which were defined in \cite{CGLS}. In addition, we define
\newline$\left([|x|]--[|x|+1]\right)$-sign-exchange operation. We use the operations for
the proof of the theorems about the connections between the signed generalized
commuting probability $\Pr_{\pi}(G)$ and the number of alternating cycles
$s(\pi)$ in $\Gr(\pi)$, similarly to its use in \cite{CGLS}.

\begin{definition}
\label{def.exchange} Let $\pi\in B_{n}$ a signed permutation. Let $x,y,w,z\in
H_{n}$ such that $y\neq x$, ~$y\neq x+1$, and
\[
[-z]\longleftrightarrow[x]\leftrightsquigarrow[-(x+1)]\longleftrightarrow[y],
~~~~ [-y]\longleftrightarrow[w]
\]
is satisfied in $\Gr(\pi)$. Then by $\left(  [x]--[y]\right)  $-exchange
operation on $\pi$ we obtain $\theta$ such that in $\Gr(\theta)$ the following
holds:
\[
[-z]\longleftrightarrow[y], ~~~~ [-y]\longleftrightarrow
[x]\leftrightsquigarrow[-(x+1)]\longleftrightarrow[w]
\]
and all the other arrows of $\Gr(\theta)$ are the same as in $\Gr(\pi)$.
\end{definition}

\begin{proposition}
\label{obs.exchange} Let $\pi\in B_{n}$, ~$x,y,z,w\in H_{n}$ satisfy
the conditions of Definition \ref{def.exchange}, and $\theta\in B_{n}$ is
obtainable from $\pi$ by an $\left(  [x]--[y]\right)  $-exchange operation.
Then the following properties hold:

\begin{itemize}
\item If $x=w$ or $y=z$ then $\theta=\pi$ (i.e., the $\left(  [x]--[y]\right)
$-exchange operation does not do anything to $\pi$);

\item If $m(\pi)=0$ (i.e., $\pi(i)>0$ for every $1\leq i\leq n$), then
$m(\theta)=0$ as well such that $|\theta|$ is obtained from $|\pi|$ by
$\left(  x--y\right)  $-exchange operation as defined in \cite{CGLS}, by the
same $x$ and $y$;

\item $\theta^{\ast}=(x, ~y, ~w)(-y, ~-(x+1), ~-z)\cdot\pi^{\ast}$
(i.e., obtaining $\theta$ by an $\left(  [x]--[y]\right)  $-exchange operation
on $\pi$ changes just the location of $y$ from being between $x+1$ and $w$ in
a cycle of $\pi^{\ast}$, to being located between $z$ and $x$ in a cycle of
$\theta^{\ast}$, and the location of $-y$ from being between $-w$ and
$-(x+1)$ in a cycle of $\pi^{\ast}$ to being located between $-x$ and $-z$ in
a cycle of $\theta^{\ast}$);

\item If ~$\sign\left(  \pi^{-1}_{x}\right)  \neq \sign\left(  \pi^{-1}%
_{x+1}\right)  $, ~then either ~~$m\left(  \theta\right)  =m\left(  \pi\right)
-1$ ~or ~$m\left(  \theta\right)  =m\left(  \pi\right)  +1$;

\item If ~$\sign\left(  \pi^{-1}_{x}\right)  =\sign\left(  \pi^{-1}%
_{x+1}\right)  $, ~then ~~$m\left(  \theta\right)  =m\left(  \pi\right)  $;

\item $s(\pi)=s(\theta)$;

\item ${\Pr}_{\pi}(G)={\Pr}_{\theta}(G)$;

\item Obtaining $\theta$ by performing a $\left(  [x]--[y]\right)  $-exchange
operation on $\pi$ if and only if obtaining $\pi$ by performing  a $\left(
[-(x+1)]--[-y]\right)  $-exchange operation on $\theta$.
\end{itemize}
\end{proposition}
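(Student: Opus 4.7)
The plan is to unpack Definition \ref{def.exchange} as a local operation on $\Gr(\pi)$ that reroutes exactly three black edges while leaving every gray edge untouched. Explicitly, the multiset of black edges changes from $\{\{-z,x\},\{-(x+1),y\},\{-y,w\}\}$ in $\Gr(\pi)$ to $\{\{-z,y\},\{-y,x\},\{-(x+1),w\}\}$ in $\Gr(\theta)$; every other black edge is preserved, and all gray edges $[i]\leftrightsquigarrow[-(i+1)]$ remain fixed by definition. With this local picture in hand, each bullet is verified by a local argument.

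The first three bullets are routine. If $x=w$ or $y=z$, the "before" and "after" edge-sets coincide, so $\theta=\pi$. The formula $\theta^{\ast}=(x,y,w)(-y,-(x+1),-z)\cdot\pi^{\ast}$ follows by reading off the cyclic factorization of $\pi^{\ast}$ from Definition \ref{bullet-bullet}: $y$ appears immediately after $(x+1)$ and before $w$ in some cycle of $\pi^{\ast}$, and left-multiplying by $(x,y,w)(-y,-(x+1),-z)$ is precisely the permutation that repositions $y$ between $z$ and $x$ (and symmetrically $-y$ between $-x$ and $-z$). The reduction to the unsigned $(x\text{--}y)$-exchange of \cite{CGLS} when $m(\pi)=0$ is immediate since the combinatorics of black-edge rewiring is indifferent to signs, and $|\theta|$ inherits the same local rerouting.

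For the bullets about $m(\theta)$, I would invoke Proposition \ref{bullet-bullet-m}, which characterises $m(\pi)$ as the number of negative entries in the cycle of $\pi^{\ast}$ containing $+0$. Two cases arise according to whether $\sign(\pi^{-1}_x)=\sign(\pi^{-1}_{x+1})$: in the equal-sign case the three-cycle $(x,y,w)$ acts within a single $\pi^{\ast}$-cycle (or the transfer it induces contributes the same count of negative labels on both sides), so $m$ is unchanged; in the opposite-sign case, exactly one element of $\{y,-y\}$ migrates in or out of the cycle containing $+0$, shifting $m$ by $\pm 1$. The reversibility bullet is then read off Definition \ref{def.exchange} by symmetry: swapping the roles of $\pi$ and $\theta$, the triple $[-(x+1)],[-y]$ plays the role of $[x],[y]$, undoing the rewiring.

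The heart of the proof, and the main obstacle, is the identity $s(\pi)=s(\theta)$; by Corollary \ref{perm-alternating} this is equivalent to saying $\pi^{\circ}$ and $\theta^{\circ}$ have the same number of cycles on $H_n$. Using the formula of the third bullet, $\theta^{\circ}$ and $\pi^{\circ}$ differ by the left factor $(x,y,w)(-y,-(x+1),-z)$, which can be rewritten as a product of two transpositions on each orbit that either merges two cycles into one and then splits one cycle into two, or vice versa, yielding a net change of zero. This is the signed analogue of the cycle-count computation carried out for $\mathcal{G}(\pi)$ in \cite{CGLS}, and the argument there transfers verbatim once one checks that gray edges are preserved by the signed definition. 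Finally, the probabilistic identity $\Pr_{\pi}(G)=\Pr_{\theta}(G)$ follows by a change of variables in the defining word: the substitution $a_{|y|}\mapsto a_{|y|}\cdot u$ for an appropriately chosen word $u$ in the remaining $a_i$'s (dictated by the local rewiring) transforms the equation defining $\Pr_{\pi}(G)$ into that defining $\Pr_{\theta}(G)$, a bijection on the probability space $G^n$ that preserves Haar-counting measure.
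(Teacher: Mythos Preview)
Your proposal is correct and in spirit follows the paper's own treatment: the paper's proof is extremely brief, declaring that ``most statements of the proposition are a direct consequence of the definition'' and that the identity $\Pr_{\pi}(G)=\Pr_{\theta}(G)$ ``can be proved by the same argument as in the case of $S_n$, which has been proved in Section~4.1 of \cite{CGLS}.'' You supply exactly this, only with more detail, including the change-of-variables bijection on $G^n$ for the probability claim and the invocation of Proposition~\ref{bullet-bullet-m} for the bookkeeping of $m(\theta)$.

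One remark on the route you choose for $s(\pi)=s(\theta)$. Passing through Corollary~\ref{perm-alternating} and analysing how the left factor $(x,y,w)(-y,-(x+1),-z)$ affects the cycle count of $\pi^{\circ}$ does work (since $\pi^{\circ}_{x}=y$ and $\pi^{\circ}_{-(x+1)}=-z$, each $3$-cycle first splits off a fixed point and then re-merges it, for net change zero), but this is more machinery than the paper seems to intend. The ``direct from the definition'' argument is simply the observation on $\Gr(\pi)$ itself: the operation removes the two vertices $[x],[-(x+1)]$ (joined by their gray edge) from the alternating path $[-z]\longleftrightarrow[x]\leftrightsquigarrow[-(x+1)]\longleftrightarrow[y]$, leaving the black edge $[-z]\longleftrightarrow[y]$, and reinserts them into the black edge $[-y]\longleftrightarrow[w]$ to form $[-y]\longleftrightarrow[x]\leftrightsquigarrow[-(x+1)]\longleftrightarrow[w]$. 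This is a pure relocation of a length-two segment within the alternating-cycle decomposition and manifestly preserves the number of cycles. Either argument is fine; yours is just a slightly longer path to the same conclusion.
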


\begin{proof}
The proof of most statements of the proposition is a direct consequence of
the definition. The proof of the part ${\Pr}_{\pi}(G)={\Pr}_{\theta}(G)$, in
case $\theta$ is obtainable from $\pi$ by a $\left(  [x]--[y]\right)
$-exchange operation, can be proved by the same argument as in the case of
$S_{n}$, which has been proved in Section 4.1. of \cite{CGLS}.
\end{proof}

\begin{example}
Consider the following signed permutation $\pi\in B_{n}$

\begin{itemize}
\item $\pi=\langle+6\;\;+2\;\;-3\;\;+4\;\;-5\;\;+1\rangle$. Then
\[
\pi^{\ast}=(+0, +1, -5, +4, -3, +2, +6)\cdot(-6, -2, +3, -4, +5, -1, -0).
\]
Thus,
\begin{align*}
\pi^{\circ}  &  =\pi^{\ast}\cdot(+0, +1, +2, +3, +4, +5, +6)\cdot(-6, -5,
-4, -3, -2, -1, -0)\\
&  =(+0, -5, +5)\cdot(-6, +4, -1)\cdot(+1, +6)\cdot(-0, -2)\cdot(+2,
-4)\cdot(+3, -3)
\end{align*}
and
\begin{align*}
\Gr(\pi): ~~  &  [+0]\leftrightsquigarrow[-1]\longleftrightarrow
[-5]\leftrightsquigarrow[+4]\longleftrightarrow[+5]\leftrightsquigarrow
[-6]\longleftrightarrow[+0]\\
&  [+1]\leftrightsquigarrow[-2]\longleftrightarrow[+6]\leftrightsquigarrow
[-0]\longleftrightarrow[+1]\\
&  [+2]\leftrightsquigarrow[-3]\longleftrightarrow[-4]\leftrightsquigarrow
[+3]\longleftrightarrow[+2].
\end{align*}
By performing a $\left(  [+2]--[-4]\right)  $-exchange operation on $\pi$
\newline(we have: $x=(+2), ~y=(-4), ~w=(+5), ~z=(-3)$) we obtain $\theta$ such
that
\begin{align*}
\theta^{\ast}  &  =(+2, -4, +5)\cdot(+4, -3, +3)\cdot\pi^{\ast}\\
&  =(+0, +1, -5, -3, -4, +2, +6)\cdot(-6, -2, +4, +3, +5, -1, -0).
\end{align*}
Thus,
\begin{align*}
\theta^{\circ}  &  =\theta^{\ast}\cdot(+0, +1, +2, +3, +4, +5, +6)\cdot(-6,
-5, -4, -3, -2, -1, -0)\\
&  =(+0, -5, +2, +5)\cdot(-6, -3, +4, -1)\cdot(+1, +6)\cdot(-0, -2)\cdot
(-4)\cdot(+3)
\end{align*}
and
\begin{align*}
\Gr(\theta): ~~  &  [+0]\leftrightsquigarrow[-1]\longleftrightarrow
[-5]\leftrightsquigarrow[+4]\longleftrightarrow[+2]\leftrightsquigarrow
[-3]\longleftrightarrow[+5]\leftrightsquigarrow[-6]\longleftrightarrow[+0]\\
&  [+1]\leftrightsquigarrow[-2]\longleftrightarrow[+6]\leftrightsquigarrow
[-0]\longleftrightarrow[+1]\\
&  [-4]\leftrightsquigarrow[+3]\longleftrightarrow[-4].
\end{align*}
Hence, ~$\theta=\langle+6\;\;+2\;\;-4\;\;-3\;\;-5\;\;+1\rangle$. By performing
a $\left(  [-3]--[+4]\right)  $-exchange operation on $\theta$ we obtain
$\pi$;

\item $\pi=\langle+5\;\;+3\;\;+2\;\;+6\;\;-4\;\;-1\rangle$. Then
\[
\pi^{\ast}=(+0, -1, -4, +6, +2, +3, +5)\cdot(-5, -3, -2, -6, +4, +1, -0).
\]
Thus,
\begin{align*}
\pi^{\circ}  &  =\pi^{\ast}\cdot(+0, +1, +2, +3, +4, +5, +6)\cdot(-6, -5,
-4, -3, -2, -1, -0)\\
&  =(+0, -0, +4)\cdot(-5, +6, -1)\cdot(+1, +3)\cdot(-4, -2)\cdot(+2,
+5)\cdot(-6, -3)
\end{align*}
and
\begin{align*}
\Gr(\pi): ~~  &  [+0]\leftrightsquigarrow[-1]\longleftrightarrow
[-0]\leftrightsquigarrow[+6]\longleftrightarrow[+4]\leftrightsquigarrow
[-5]\longleftrightarrow[+0]\\
&  [+1]\leftrightsquigarrow[-2]\longleftrightarrow[+3]\leftrightsquigarrow
[-4]\longleftrightarrow[+1]\\
&  [+2]\leftrightsquigarrow[-3]\longleftrightarrow[+5]\leftrightsquigarrow
[-6]\longleftrightarrow[+2].
\end{align*}
By performing a $\left(  [+3]--[+1]\right)  $-exchange operation on $\pi$
\newline(we have: $x=(+3), ~y=(+1), ~w=(-0), ~z=(+2)$) we obtain $\theta$ such
that
\begin{align*}
\theta^{\ast}  &  =(+3, +1, -0)\cdot(-1, -4, -2)\cdot\pi^{\ast}\\
&  =(+0, -4, +6, +2, +1, +3, +5)\cdot(-5, -3, -1, -2, -6, +4, -0).
\end{align*}
Thus,
\begin{align*}
\theta^{\circ}  &  =\theta^{\ast}\cdot(+0, +1, +2, +3, +4, +5, +6)\cdot(-6,
-5, -4, -3, -2, -1, -0)\\
&  = (+0, +3, -0, +4)\cdot(-5, +6, -4, -1)\cdot(+1)\cdot(-2)\cdot(+2,
+5)\cdot(-6, -3)
\end{align*}
and
\begin{align*}
\Gr(\theta): ~~  &  [+0]\leftrightsquigarrow[-1]\longleftrightarrow
[+3]\leftrightsquigarrow[-4]\longleftrightarrow[-0]\leftrightsquigarrow
[+6]\longleftrightarrow[+4]\leftrightsquigarrow[-5]\longleftrightarrow[+0]\\
&  [+1]\leftrightsquigarrow[-2]\longleftrightarrow[+1]\\
&  [+2]\leftrightsquigarrow[-3]\longleftrightarrow[+5]\leftrightsquigarrow
[-6]\longleftrightarrow[+2].
\end{align*}
Hence, ~$\theta=\langle+5\;\;+3\;\;+1\;\;+2\;\;+6\;\;-4\rangle$, then by
performing a $\left(  [-4]--[-1]\right)  $-exchange operation on $\theta$, we
obtain $\pi$.
\end{itemize}
\end{example}

\begin{definition}
\label{cyclic-cdot} Let $\pi\in B_{n}$ such that $\pi^{\ast}\in S(2+2n)$
contains one of the following:

\begin{itemize}
\item $x+1\rightarrow y\rightarrow x$, ~(If $x=+n$, then it becomes $+0\rightarrow
y\rightarrow+n$), and assume $\sign\left(x\right)=(+)$;

\item $-x-1\rightarrow y\rightarrow x$, ~(If $x=+n$, then it becomes $-0\rightarrow
y\rightarrow+n$), and assume $\sign\left(x\right)=(+)$;

\item $-x+1\rightarrow y\rightarrow x$, ~(If $x=-n$, then it becomes $+0\rightarrow
y\rightarrow-n$), and assume $\sign\left(x\right)=(-)$.
\end{itemize}

Then we define $\left(  [x]--[y]\right)  $-cyclic operation as follows (in case
$\pi^{\ast}$ contains ~$x+1\rightarrow y\rightarrow x$ and $\sign\left(
x\right)  =\sign\left(  y\right)=(+)$, the definition is the same as in
\cite{CGLS}):

\begin{itemize}
\item If $\sign\left(  y\right)  =(+)$, ~$x+1\rightarrow y$ in $\pi^{\ast}$,
and $y>x+1$, then in $\pi^{\ast}$ we replace $x+1$ with $y-1$ and each $t$,
where $t=x+2,...,y-1$, we replace with $t-1$. We also replace $-x-1$ with
$-y-1$ and each $t$, where $t=-x-2,...,-y+1$, we replace with $t+1$;

\item If $\sign\left(  y\right)  =(+)$, ~$x+1\rightarrow y$ in $\pi^{\ast}$,
and $y<x$, then in $\pi^{\ast}$ we replace $x$ with $y+1$ and each $t$ for
$t=y+1,...,x-1$, we replace with $t+1$. We also replace $-x$ with $-y-1$ and
each $t$ for $t=-y-1,...,-x+1$, we replace with $t-1$;

\item If $\sign\left(  y\right)  =(-)$, ~$x+1\rightarrow y$ in $\pi^{\ast}$,
and $|y|>x+1$, then in $\pi^{\ast}$ we replace $x+1$ with $y$ and each $t$,
where $t=x+2,...,|y|$, we replace with $t-1$. We also replace $-x-1$ with $-y$
and each $t$, where $t=-x-2,...,-|y|$, we replace with $t+1$;

\item If $\sign\left(  y\right)  =(-)$, ~$x+1\rightarrow y$ in $\pi^{\ast}$,
and $|y|<x$, then in $\pi^{\ast}$ we replace $x$ with $y$ and each $t$ for
$t=|y|+1,...,x-1$, we replace with $t+1$. We also replace $-x$ with $-y$ and
each $t$ for $t=-|y|-1,...,-x+1$, we replace with $t-1$;

\item If $\sign\left(  y\right)  =(+)$, ~$-x-1\rightarrow y$ in $\pi^{\ast}%
$, and $y>x+1$, then in $\pi^{\ast}$ we replace $x+1$ with $y-1$ and each
$t$, where $t=x+2,...,y-1$, we replace with $t-1$. We also replace $-x-1$ with
$-y+1$ and each $t$, where $t=-x-2,...,-y+1$, we replace with $t+1$;

\item If $\sign\left(  y\right)  =(+)$, ~$-x-1\rightarrow y$ in $\pi^{\ast}%
$, and $y<x$, then in $\pi^{\ast}$ we replace $x+1$ with $y$ and each $t$,
for $t=y,...,x$, we replace with $t+1$. We also replace $-x-1$ with $-y$ and
each $t$ for $t=-y,...,-x$, we replace with $t-1$;

\item If $\sign\left(  y\right)  =(-)$, ~$-x-1\rightarrow y$ in $\pi^{\ast}%
$, and $|y|>x+1$, then in $\pi^{\ast}$ we replace $x+1$ with $y$ and each
$t$, where $t=x+2,...,|y|$, we replace with $t-1$. We also replace $-x-1$ with
$-y$ and each $t$, where $t=-x-2,...,-|y|$, we replace with $t+1$;

\item If $\sign\left(  y\right)  =(-)$, ~$-x-1\rightarrow y$ in $\pi^{\ast}%
$, and $|y|<x$, then in $\pi^{\ast}$ we replace $x+1$ with $y-1$ and each
$t$ for $t=|y|+1,...,x$, we replace with $t+1$. We also replace $-x-1$ with
$-y+1$ and each $t$ for $t=-|y|-1,...,-x$, we replace with $t-1$;

\item If $\sign\left(  y\right)  =(+)$, ~$-x+1\rightarrow y$ in $\pi^{\ast}%
$, and $y>|x|+1$, then in $\pi^{\ast}$ we replace $|x|$ with $y$ and each
$t$, where $t=|x|+1,...,y$, we replace with $t-1$. We also replace $-|x|$ with
$-y$ and each $t$, where $t=-|x|-1,...,-y$, we replace with $t+1$;

\item If $\sign\left(  y\right)  =(+)$, ~$-x+1\rightarrow y$ in $\pi^{\ast}%
$, and $y<|x|$, then in $\pi^{\ast}$ we replace $|x|$ with $y+1$ and each
$t$ for $t=y+1,...,|x|-1$, we replace with $t+1$. We also replace $-|x|$ with
$-y-1$ and each $t$ for $t=-y-1,...,-|x|+1$, we replace with $t-1$;

\item If $\sign\left(  y\right)  =(-)$, ~$-x+1\rightarrow y$ in $\pi^{\ast}%
$, and $|y|>|x|+1$, then in $\pi^{\ast}$ we replace $|x|$ with $y+1$ and
each $t$, where $t=|x|+1,...,|y|-1$, we replace with $t-1$. We also replace
$-|x|$ with $-|y|-1$ and each $t$, where $t=-|x|-1,...,-|y|+1$, we replace
with $t+1$;

\item If $\sign\left(  y\right)  =(-)$, ~$-x+1\rightarrow y$ in $\pi^{\ast}%
$, and $|y|<|x|$, then in $\pi^{\ast}$ we replace $|x|$ with $y$ and each
$t$ for $t=|y|,...,|x|-1$, we replace with $t+1$. We also replace $-|x|$ with
$-y$ and each $t$ for $t=-|y|,...,-|x|+1$, we replace with $t-1$;
\end{itemize}
\end{definition}

\begin{observation}
Notice the following observations concerning $\sign\left(  x\right)  $:

\begin{itemize}
\item Assume $\sign\left(  x\right)  =(+)$ and $\pi^{\ast}$ contains
$x+1\rightarrow y\rightarrow x$. Then by Definition \ref{bullet-bullet},
$\pi^{\ast}$ contains $-x\rightarrow-y\rightarrow-x-1$ as well. Thus, the
case of $\pi^{\ast}$ contains $x^{\prime}+1\rightarrow y^{\prime
}\rightarrow x^{\prime}$, with $\sign\left(  x^{\prime}\right)  =(-)$ we
conclude by substituting $x^{\prime}=-x-1$;

\item Assume $\sign\left(  x\right)  =(+)$ and $\pi^{\ast}$ contains
$-x-1\rightarrow y\rightarrow x$. Then by Definition \ref{bullet-bullet},
$\pi^{\ast}$ contains $-x\rightarrow-y\rightarrow x+1$ as well. Thus, the
case of $\pi^{\ast}$ contains \newline$-x^{\prime}+1\rightarrow y^{\prime
}\rightarrow x^{\prime}$, with $\sign\left(  x^{\prime}\right)  =(+)$ we
conclude by substituting $x^{\prime}=x+1$;

\item Assume $\sign\left(  x\right)  =(-)$ and $\pi^{\ast}$ contains
$-x+1\rightarrow y\rightarrow x$. Then by Definition \ref{bullet-bullet},
$\pi^{\ast}$ contains $-x\rightarrow-y\rightarrow x-1$ as well. Thus, the
case of $\pi^{\ast}$ contains \newline$-x^{\prime}-1\rightarrow y^{\prime
}\rightarrow x^{\prime}$, with $\sign\left(  x^{\prime}\right)  =(-)$ we
conclude by substituting $x^{\prime}=x-1$.
\end{itemize}
\end{observation}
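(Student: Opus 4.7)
The plan is to reduce all three bullets to a single symmetry of $\pi^{\ast}$ that is already built into Definition \ref{bullet-bullet}. Recall that
\[
\pi^{\ast}=(\pi_{+n},\pi_{+(n-1)},\ldots,\pi_{+0})\cdot(\pi_{-0},\pi_{-1},\ldots,\pi_{-n}),
\]
and that $\pi_{-i}=-\pi_{+i}$ for every $i$ by Definition \ref{bn}. Hence the second cycle of $\pi^{\ast}$ is obtained from the first by negating each entry and reversing its orientation. A direct inspection shows the following symmetry: for every $a,b\in H_{n}$,
\[
\pi^{\ast}(a)=b \quad\Leftrightarrow\quad \pi^{\ast}(-b)=-a.
\]
Equivalently, every arrow $a\to b$ in $\pi^{\ast}$ is automatically accompanied by the arrow $-b\to -a$ in $\pi^{\ast}$. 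I would state and prove this as a short preliminary claim.

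Equipped with this symmetry, each bullet follows by applying it to the two arrows of the displayed chain and renaming variables. For the first bullet, the chain $x+1\to y\to x$ in $\pi^{\ast}$ yields, via the symmetry, the arrows $-y\to -x-1$ and $-x\to -y$, which concatenate into $-x\to -y\to -x-1$. Setting $x'=-x-1$ and $y'=-y$ rewrites this as $x'+1\to y'\to x'$; since $\sign(x)=(+)$ forces $x\geq 0$ and therefore $\sign(x')=(-)$, the substitution is precisely the one claimed. The other two bullets are handled by the same template: the chains $-x-1\to y\to x$ and $-x+1\to y\to x$ symmetrise to $-x\to -y\to x+1$ and $-x\to -y\to x-1$, which under $x'=x+1,\;y'=-y$ and $x'=x-1,\;y'=-y$ become $-x'+1\to y'\to x'$ and $-x'-1\to y'\to x'$ respectively.

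The only delicate point is the sign bookkeeping, which I expect to be the main nuisance rather than a genuine obstacle. For each substitution one must verify that the new variable $x'$ lies in the half of $H_{n}$ claimed by the observation: that $-x-1$ carries sign $(-)$ when $\sign(x)=(+)$, that $x+1$ carries sign $(+)$ when $\sign(x)=(+)$, and that $x-1$ carries sign $(-)$ when $\sign(x)=(-)$. One must also verify that the two produced arrows appear inside $\pi^{\ast}$ in the correct cyclic order, rather than straddling the two distinct cycles of $\pi^{\ast}$; this follows at once from the form of the symmetry, since negation together with reversal interchanges the two cycles as whole objects. No deeper argument is needed — the content of the observation is really that the three apparent sign cases of the $\left([x]--[y]\right)$-cyclic operation are a single case in disguise.
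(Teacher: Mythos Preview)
Your proposal is correct and is exactly the approach the paper takes, only written out in more detail: the observation in the paper carries its own justification inline (each bullet simply invokes Definition~\ref{bullet-bullet}), and what you do is make explicit the underlying symmetry $\pi^{\ast}(a)=b\Leftrightarrow\pi^{\ast}(-b)=-a$ that this definition encodes, then apply it and rename variables. There is nothing further to add.
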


\begin{proposition}
\label{obs.cyclic} Let $\pi$ and $\theta$ be two signed permutations of
$B_{n}$ such that $\theta$ is obtainable from $\pi$ by a $\left(
[x]--[y]\right)  $-cyclic operation. Then the following properties hold:

\begin{itemize}
\item If $\sign\left(  y\right)  =(+)$, then ~$m(\theta)=m(\pi)$;

\item If $\sign\left(  y\right)  =(-)$, then either ~$m(\theta)=m(\pi)-1$ ~or
~$m(\theta)=m(\pi)+1$;

\item $s(\theta)=s(\pi)$;

\item ${\Pr}_{\pi}(G)={\Pr}_{\theta}(G)$.
\end{itemize}
\end{proposition}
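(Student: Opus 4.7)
The plan is to reduce each of the four bullets to a single structural observation about how the $([x]--[y])$-cyclic operation acts on $\pi^{\ast}$, and then to invoke Corollary \ref{perm-alternating} for the cycle count together with a change-of-variables argument for the probability. The core claim I would establish first is that, in each of the twelve sub-cases of Definition \ref{cyclic-cdot}, one has $\theta^{\ast}=\sigma\cdot\pi^{\ast}\cdot\sigma^{-1}$, where $\sigma\in S_{H_{n}}$ is an explicit cyclic permutation of a contiguous block of labels of $H_{n}$ together with the corresponding block of negatives. Moreover, $\sigma$ either commutes with the shift permutation $(+0,+1,\ldots,+n)(-n,\ldots,-1,-0)$ from Definition \ref{bullet-bullet}, or commutes with it after an analogous relabeling, so that $\theta^{\circ}$ is conjugate in $S_{H_{n}}$ to $\pi^{\circ}$. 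Since conjugation preserves cycle structure, Corollary \ref{perm-alternating} then yields $s(\theta)=s(\pi)$.

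For the sign-count bullets, I would read off from the explicit form of $\sigma$ how many indices cross the boundary between $H_{n}^{+}=\{+0,\ldots,+n\}$ and $H_{n}^{-}=\{-0,\ldots,-n\}$. If $\sign(y)=(+)$, the supporting block lies entirely on one side, so every $\pi_{+i}$ is sent to an element of the same sign, whence $m(\theta)=m(\pi)$ by Definition \ref{m-pi-minus}. If $\sign(y)=(-)$, the block crosses $\pm 0$ exactly once, so exactly one pair $\{j,-j\}$ has its role swapped among the positive indices, producing one extra or one fewer negative image; this is precisely $m(\theta)=m(\pi)\pm 1$.

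For the probability claim, the plan is to translate the conjugation at the level of $\pi^{\ast}$ into a substitution of the variables $a_{1},\ldots,a_{n}$ in the defining equation
\[
a_{1}a_{2}\cdots a_{n}=a_{|\pi|_{1}}^{\epsilon_{1}(\pi)}a_{|\pi|_{2}}^{\epsilon_{2}(\pi)}\cdots a_{|\pi|_{n}}^{\epsilon_{n}(\pi)}.
\]
When $\sign(y)=(+)$, the required substitution is a pure cyclic relabeling of a contiguous block of the $a_{i}$, exactly as in Section 4.1 of \cite{CGLS}. When $\sign(y)=(-)$, the substitution is a cyclic relabeling composed with a single inversion $a_{j}\mapsto a_{j}^{-1}$, accounting for the sign flip that also changes $m$. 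In either case the substitution is a bijection on $G^{n}$, so $\Pr_{\pi}(G)=\Pr_{\theta}(G)$.

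The main obstacle will be the bookkeeping for the twelve sub-cases of Definition \ref{cyclic-cdot}: one must exhibit the conjugating $\sigma$ and the accompanying substitution on the $a_{i}$ in each configuration (distinguishing whether $\pi^{\ast}$ contains $x+1\!\rightarrow\!y\!\rightarrow\!x$, $-x-1\!\rightarrow\!y\!\rightarrow\!x$, or $-x+1\!\rightarrow\!y\!\rightarrow\!x$, and whether $y>|x|$ or $y<|x|$). However, once the unifying description "$\sigma$ is a cyclic shift of a block of labels, possibly crossing the boundary $\pm 0$ exactly once" is in place, all four bullets follow uniformly; the remainder is a mechanical verification modeled on the $S_{n}$ argument of \cite{CGLS}, with the single new ingredient being the inversion that accompanies a boundary-crossing relabeling.
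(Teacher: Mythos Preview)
The paper's own proof is a brief deferral: the first three bullets ``come directly from the definition'' and the last is referred to Section~4.2 of \cite{CGLS}. Your attempt to give this a uniform structural explanation via $\theta^{\ast}=\sigma\,\pi^{\ast}\sigma^{-1}$ is sound for the two $m$-bullets (the relabeling $\sigma$ either preserves all signs or flips exactly one pair $\{+j,-j\}$, which is essentially what you say), but the mechanism you propose for $s(\theta)=s(\pi)$ has a genuine gap. The permutation $\sigma$ does \emph{not} commute with the shift $c=(+0,+1,\ldots,+n)(-n,\ldots,-1,-0)$: already in the first sub-case of Definition~\ref{cyclic-cdot} one has $\sigma(c(x))=\sigma(x+1)=y-1$ while $c(\sigma(x))=c(x)=x+1$. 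Equivalently, $\sigma$ does not carry gray edges of $\Gr(\pi)$ to gray edges of $\Gr(\theta)$ (for instance the gray pair $\{x+1,-(x+2)\}$ is sent to $\{y-1,-(x+1)\}$, which is not gray). Hence $\theta^{\circ}=\sigma\,\pi^{\ast}\sigma^{-1}c$ is not $\sigma\,\pi^{\circ}\sigma^{-1}$, and your fallback phrase ``commutes with it after an analogous relabeling'' does not actually supply a conjugating element. The conclusion $s(\theta)=s(\pi)$ is correct, but one must argue directly on the breakpoint graph that the specific local rewiring of black edges, against the \emph{fixed} gray matching, preserves the alternating-cycle decomposition; that case-by-case verification is what the paper is tacitly invoking.

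The probability bullet has a parallel issue. A ``pure cyclic relabeling of a contiguous block of the $a_i$'', i.e.\ a substitution of the form $b_j=a_{\rho(j)}$, does not preserve the left-hand side $a_1a_2\cdots a_n$, so it cannot by itself give a bijection between the solution sets for ${\Pr}_\pi$ and ${\Pr}_\theta$. The argument in \cite{CGLS} for the cyclic operation (their Section~4.2; Section~4.1 treats the exchange operation) uses a substitution in which one variable is replaced by a \emph{product} of consecutive $a_i$'s, chosen precisely so that $a_1\cdots a_n$ is unchanged while the right-hand word passes from the $\pi$-word to the $\theta$-word. Your additional inversion $a_j\mapsto a_j^{-1}$ in the $\sign(y)=(-)$ cases is indeed the correct new ingredient, but it must be grafted onto that product-type substitution, not onto a mere relabeling of indices.
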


\begin{proof}
The proof of the first three parts of the proposition comes directly from the
definition of $\left(  [x]--[y]\right)  $-cyclic operation as defined in
Definition \ref{cyclic-cdot}. The last part has been proved partially in
Section 4.2. of \cite{CGLS} (the case, where $\pi^{\ast}$ contains
$x+1\rightarrow y\rightarrow x$, and where $\sign\left(  y\right)  =(+)$), and
for the other cases, it can be proved by very similar arguments.
\end{proof}

\begin{example}
Consider the following signed permutation $\pi\in B_{n}$

\begin{itemize}
\item $\pi=\langle-2\;\;-6\;\;+3\;\;+1\;\;+5\;\;+4\rangle$. Then
\[
\pi^{\ast}=(+0, +4, +5, +1, +3, -6, -2)\cdot(+2, +6, -3, -1, -5, -4, -0).
\]
We can perform $\left(  [-2]--[-6]\right)  $-cyclic operation on $\pi$ and
obtain a new permutation $\theta$. We have
\[
\theta^{\ast}=(+0, +3, +4, +1, +2, -6, +5)\cdot(-5, +6, -2, -1, -4, -3,
-0).
\]
Hence, $\theta=\langle+5\;\;-6\;\;+2\;\;+1\;\;+4\;\;+3\rangle$;

\item $\pi=\langle-5\;\;+3\;\;+6\;\;+1\;\;+4\;\;+2\rangle$. Then
\[
\pi^{\ast}=(+0, +2, +4, +1, +6, +3, -5)\cdot(+5, -3, -6, -1, -4, -2, -0).
\]
We can perform $\left(  [-5]--[+3]\right)  $-cyclic operation on $\pi$ and
obtain a new permutation $\theta$. We have
\[
\theta^{\ast}=(+0, +2, +5, +1, +6, +3, -4)\cdot(+4, -3, -6, -1, -5, -2,
-0).
\]
Hence, $\theta=\langle-4\;\;+3\;\;+6\;\;+1\;\;+5\;\;+2\rangle$.

\item $\pi=\langle+2\;\;-6\;\;-3\;\;+1\;\;+5\;\;+4\rangle$. Then
\[
\pi^{\ast}=(+0, +4, +5, +1, -3, -6, +2)\cdot(-2, +6, +3, -1, -5, -4, -0).
\]
We can perform $\left(  [+2]--[-6]\right)  $-cyclic operation on $\pi$ and
obtain a new permutation $\theta$. We have
\[
\theta^{\ast}=(+0, +3, +4, +1, +6, -5, +2)\cdot(-2, +5, -6, -1, -4, -3,
-0).
\]
Hence, $\theta=\langle+2\;\;-5\;\;+6\;\;+1\;\;+4\;\;+3\rangle$;

\item $\pi=\langle+5\;\;+3\;\;-6\;\;+1\;\;+4\;\;+2\rangle$. Then
\[
\pi^{\ast}=(+0, +2, +4, +1, -6, +3, +5)\cdot(-5, -3, +6, -1, -4, -2, -0).
\]
We can perform $\left(  [+5]--[+3]\right)  $-cyclic operation on $\pi$ and
obtain a new permutation $\theta$. We have
\[
\theta^{\ast}=(+0, +2, +5, +1, -3, +4, +6)\cdot(-6, -4, +3, -1, -5, -2,
-0).
\]
Hence, $\theta=\langle+6\;\;+4\;\;-3\;\;+1\;\;+5\;\;+2\rangle$.
\end{itemize}
\end{example}

\begin{definition}
\label{x-x+1} For $\pi\in B_{n}$ such that $\pi^{\ast}\in S(2+2n)$
contains either
\[
|x|\rightarrow-|x|+1, ~~~~ |x|+1\rightarrow-|x|,
\]
or
\[
-|x|\rightarrow|x|+1, ~~~~ -|x|+1\rightarrow|x|,
\]
we define $\left(  [|x|]--[|x|+1]\right)  $-sign-change operation by replacing
in $\pi^{\ast}$ the following replacements:

\begin{itemize}
\item Replace $x$ by $-x$ and $-x$ by $x$;

\item Replace $x+1$ by $-x-1$ and $-x-1$ by $x+1$.
\end{itemize}

i.e., If $\pi^{\ast}$ contains
\[
w\rightarrow|x|\rightarrow-(|x|+1)\rightarrow z, ~~~~ -z\rightarrow
|x|+1\rightarrow-|x|\rightarrow-w,
\]
then by performing a $\left(  [|x|]--[|x|+1]\right)  $-sign-change operation on
$\pi$ we obtain $\theta$ such that $\theta^{\ast}$ contains
\[
w\rightarrow-|x|\rightarrow|x|+1\rightarrow z, ~~~~ -z\rightarrow
-(|x|+1)\rightarrow|x|\rightarrow-w.
\]
By performing a $\left(  [|x|]--[|x|+1]\right)  $-sign-change operation on
$\theta$ we obtain back $\pi$.
\end{definition}

\begin{proposition}
\label{obs.sign.change} Let $\pi$ and $\theta$ be two signed permutations of
$B_{n}$ such that $\theta$ is obtainable from $\pi$ by a $\left(
[|x|]--[|x|+1]\right)  $-sign-change operation. Then the following properties hold:

\begin{itemize}
\item If $|x|\neq0$ or $|x|\neq n$, then $m(\theta)=m(\pi)$;

\item $s(\theta)=s(\pi)$;

\item ${\Pr}_{\pi}(G)={\Pr}_{\theta}(G)$.
\end{itemize}
\end{proposition}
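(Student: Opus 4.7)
The plan is to verify each of the three claims by unwinding Definition~\ref{x-x+1} of the sign-change operation, in the same style as the proofs of Propositions~\ref{obs.exchange} and~\ref{obs.cyclic}. For $m(\theta) = m(\pi)$, I would invoke Proposition~\ref{bullet-bullet-m}, which identifies $m(\pi)$ with the number of elements of sign $(-)$ in the cycle of $\pi^{*}$ containing $+0$. In each of the two patterns admitted by Definition~\ref{x-x+1}, the pair $\{x, -(x+1)\}$ lies in one cycle of $\pi^{*}$ while $\{-x, x+1\}$ lies in the other; the sign-change has these two pairs trade places. Consequently every affected cycle loses one $(+)$-element and one $(-)$-element and gains one of each sign, so the count of $(-)$-signs in the cycle of $+0$ is preserved, provided that $+0$ itself is not among the four swapped vertices. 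This is exactly what the condition $|x|\neq 0$ and $|x|\neq n$ excludes, namely the boundary cases in which the wrap-around in the definition of $i+1$ would identify $\pm x$ or $\pm(x+1)$ with $\pm 0$.

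For $s(\theta) = s(\pi)$, I would appeal to Corollary~\ref{perm-alternating}, which says that $2s(\pi)$ equals the number of cycles of $\pi^{\circ} = \pi^{*}\cdot(+0, +1, \dots, +n)(-n, \dots, -1, -0)$. The sign-change acts on $\pi^{*}$ by conjugation with the involution $\sigma = (x, -x)(x+1, -(x+1))$ restricted to the moved labels, so a direct case check against the two patterns of Definition~\ref{x-x+1} shows that the arrows of $\theta^{\circ}$ around $\pm x$ and $\pm(x+1)$ are obtained from those of $\pi^{\circ}$ by the same relabeling, while every other arrow is left unchanged. Hence the cycles of $\theta^{\circ}$ are in bijection with those of $\pi^{\circ}$, yielding $s(\theta) = s(\pi)$.

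For the equality ${\Pr}_{\pi}(G) = {\Pr}_{\theta}(G)$, I would translate the two patterns of Definition~\ref{x-x+1} into the defining equation $a_{1}\cdots a_{n} = a_{|\pi|_{1}}^{\epsilon_{1}(\pi)}\cdots a_{|\pi|_{n}}^{\epsilon_{n}(\pi)}$; the sign-change replaces two adjacent factors of opposite exponents on the right-hand side by the same factors with exponents swapped. Composing this with the change of variables $a_{|x|}\mapsto a_{|x|}^{-1}$ and $a_{|x|+1}\mapsto a_{|x|+1}^{-1}$ on the left-hand side supplies a bijection between the solution sets counted by the two probabilities, exactly parallel to the argument used for the $\left(x--y\right)$-exchange operation in Section~4.1 of \cite{CGLS}. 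The main obstacle I anticipate is the bookkeeping in the $s(\theta) = s(\pi)$ step: one must verify, in each of the two patterns and in each orientation of the sign-change, that the local surgery is a genuine relabeling that preserves cycle lengths rather than accidentally merging or splitting alternating cycles of the breakpoint graph, and the boundary cases $|x|\in\{0, n\}$ must be ruled out explicitly because of the wrap-around conventions for $i+1$.
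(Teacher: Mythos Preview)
Your argument for the first item via Proposition~\ref{bullet-bullet-m} is correct. The other two items have genuine gaps.

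For $s(\theta)=s(\pi)$: your claim that $\theta^{\circ}$ arises from $\pi^{\circ}$ by relabeling the four vertices $\pm|x|,\pm(|x|+1)$ is false. The involution $\sigma=(|x|,-|x|)(|x|+1,-(|x|+1))$ does conjugate $\pi^{*}$ to $\theta^{*}$, but it does not commute with $\rho=(+0,\dots,+n)(-n,\dots,-0)$, so $\theta^{\circ}=\sigma\pi^{*}\sigma\cdot\rho$ is not $\sigma\pi^{\circ}\sigma$. Concretely, $\pi^{\circ}(|x|-1)=\pi^{*}(|x|)=-(|x|+1)$ whereas $\theta^{\circ}(|x|-1)=\theta^{*}(|x|)=-w$; the arrow out of $|x|-1$ changes even though $|x|-1$ is not among your four relabeled vertices, so ``every other arrow is left unchanged'' fails. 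The paper avoids $\pi^{\circ}$ altogether and inspects $\Gr(\pi)$ and $\Gr(\theta)$ directly: in both graphs the four vertices occupy two segments whose external endpoints are the same pairs $\{w,|x|-1\}$ and $\{-z,-(|x|+2)\}$, so the alternating cycles are in bijection. Note also that $s(\theta)=s(\pi)$ is asserted for all $|x|$, including $0$ and $n$; those cases cannot be ``ruled out'' as you suggest.

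For ${\Pr}_{\pi}(G)={\Pr}_{\theta}(G)$: the substitution $a_{|x|}\mapsto a_{|x|}^{-1}$, $a_{|x|+1}\mapsto a_{|x|+1}^{-1}$ does convert the right-hand side $\cdots a_{|x|+1}a_{|x|}^{-1}\cdots$ of the $\pi$-equation into the right-hand side $\cdots a_{|x|+1}^{-1}a_{|x|}\cdots$ of the $\theta$-equation, but it simultaneously turns the left-hand side $\cdots a_{|x|}a_{|x|+1}\cdots$ into $\cdots a_{|x|}^{-1}a_{|x|+1}^{-1}\cdots$, which is \emph{not} the left-hand side of the $\theta$-equation. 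So this is not a bijection between the two solution sets. The paper's substitution is different: writing $a_{|x|}a_{|x|+1}=a_{|x|}\,(a_{|x|+1}a_{|x|}^{-1})\,a_{|x|}=a_{|x|+1}\,(a_{|x|+1}^{-1}a_{|x|})\,a_{|x|+1}$ on the left and treating the bracketed block as an independent variable shows both equations have the form $\cdots d\,c\,d\cdots=\cdots c\cdots$ in independent variables $(d,c)$. Finally, the probability claim must also be proved for $|x|\in\{0,n\}$; the paper handles this separately by rewriting both sides as $\Pr(\gamma^{2}=\alpha^{-1}\beta)$ for an independent $\gamma$, an argument your proposal omits.
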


\begin{proof}
The proof of the first part comes directly from Definition \ref{x-x+1}. Thus,
we turn to the second part of the proposition. Assume $\pi^{\ast}$
contains
\[
w\rightarrow|x|\rightarrow-(|x|+1)\rightarrow z, ~~~~ -z\rightarrow
|x|+1\rightarrow-|x|\rightarrow-w,
\]
Therefore, $\Gr(\pi)$ has the form
\begin{align*}
&  \ldots\leftrightsquigarrow[w]\longleftrightarrow[-|x|]\leftrightsquigarrow
[|x|-1]\longleftrightarrow\ldots,\\
&  \ldots\leftrightsquigarrow[-z]\longleftrightarrow
[-(|x|+1)]\leftrightsquigarrow[|x|)]\longleftrightarrow
[|x|+1]\leftrightsquigarrow[-(|x|+2))]\longleftrightarrow\ldots .
\end{align*}
Now, we obtain $\theta$ by performing a $\left(  [|x|]--[|x|+1]\right)
$-sign-change operation such that $\theta^{\ast}$ contains
\[
w\rightarrow-|x|\rightarrow|x|+1\rightarrow z, ~~~~ -z\rightarrow
-(|x|+1)\rightarrow|x|\rightarrow-w,
\]
Therefore, $\Gr(\theta)$ has the form
\begin{align*}
&  \ldots\leftrightsquigarrow[w]\longleftrightarrow[|x|]\leftrightsquigarrow
[-(|x|+1)]\longleftrightarrow[-|x|]\leftrightsquigarrow
[|x|-1]\longleftrightarrow\ldots,\\
&  \ldots\leftrightsquigarrow[-z]\longleftrightarrow
[|x|+1]\leftrightsquigarrow[-(|x|+2))]\longleftrightarrow\ldots .
\end{align*}
Hence, $s(\pi)=s(\theta)$. Now, we prove the last part of the proposition,
i.e., ${\Pr}_{\pi}(G)={\Pr}_{\theta}(G)$, in case $\theta$ is obtainable from
$\pi$ by a $\left(  [|x|]--[|x|+1]\right)  $-sign-change operation. First,
assume $|x|\neq0$ and $|x|\neq n$. Assume also, $\pi_{k}=|x|+1$ and $\pi
_{k+1}=-|x|$ for some $+1\leq k\leq n-1$. Then
\begin{align*}
{\Pr}_{\pi}(G)  &  =\Pr(a_{1}\cdots a_{x-1}a_{x}(a_{x+1}a_{x}^{-1}%
)a_{x}a_{x+2}\cdots a_{n}=a_{|\pi|_{1}}^{\epsilon_{\pi_{1}}}\cdots
a_{|\pi|_{k-1}}^{\epsilon_{\pi_{k-1}}}(a_{x+1}a_{x}^{-1})a_{|\pi|_{k+1}%
}^{\epsilon_{\pi_{k+1}}}\cdots a_{|\pi|_{n}}^{\epsilon_{\pi_{n}}})\\
&  = \Pr(a_{1}\cdots a_{x-1}a_{x+1}(a_{x+1}^{-1}a_{x})a_{x+1}a_{x+2}\cdots
a_{n}=a_{|\pi|_{1}}^{\epsilon_{\pi_{1}}}\cdots a_{|\pi|_{k-1}}^{\epsilon
_{\pi_{k-1}}}(a_{x+1}^{-1}a_{x})a_{|\pi|_{k+1}}^{\epsilon_{\pi_{k+1}}}\cdots
a_{|\pi|_{n}}^{\epsilon_{\pi_{n}}})\\
&  = \Pr(a_{1}\cdots a_{n}=a_{|\theta|_{1}}^{\epsilon_{\theta_{1}}}\cdots
a_{|\theta|_{n}}^{\epsilon_{\theta_{n}}})\\
&  = {\Pr}_{\theta}(G).
\end{align*}
The case, $|x|\neq0$, ~$|x|\neq n$, ~$\pi_{k}=|x|$ and $\pi_{k+1}=-(|x|+1)$,
can be proved by the same argument, while showing
\[
\Pr\left((a_{1}\cdots a_{n})^{-1}=(a_{|\pi|_{1}}^{\epsilon_{\pi_{1}}}\cdots
a_{|\pi|_{n}}^{\epsilon_{\pi_{n}}})^{-1}\right)=\Pr\left((a_{1}\cdots a_{n})^{-1}%
=(a_{|\theta|_{1}}^{\epsilon_{\theta_{1}}}\cdots a_{|\theta|_{n}}%
^{\epsilon_{\theta_{n}}})^{-1}\right).
\]
Now, it remains to show ${\Pr}_{\pi}(G)={\Pr}_{\theta}(G)$, in case $\theta$ is
obtainable from $\pi$ by a \newline$\left(  [|x|]--[|x|+1]\right)
$-sign-change operation, where $|x|=0$ or $|x|=n$. Assume, $|x|=n$ (The case
of $|x|=0$ is proved by the same argument). Consider the following notations:

\begin{itemize}
\item $\alpha=a_{1}a_{2}\cdots a_{n-1}$;

\item $\beta=a_{|\pi|_{1}}^{\epsilon_{\pi_{1}}}\cdot a_{|\pi|_{2}}%
^{\epsilon_{\pi_{2}}}\cdots a_{|\pi|_{n-1}}^{\epsilon_{\pi_{n-1}}}$.
\end{itemize}

Then
\[
{\Pr}_{\pi}(G)=\Pr(\alpha\cdot a_{n}=\beta\cdot a_{n}^{-1})=\Pr(a_{n}%
^{2}=\alpha^{-1}\cdot\beta),
\]
where $a_{n}$ is independent on $\alpha$ and on $\beta$. Since we obtain
$\theta$ from $\pi$ by a $\left(  [n]--[0]\right)  $-sign-change operation, we
have
\[
{\Pr}_{\theta}(G)=\Pr\left(\alpha\cdot a_{n}=a_{n}^{-1}\cdot\beta^{-1}\right)=\Pr
\left((a_{n}\cdot\alpha)^{-2}=\beta\cdot\alpha^{-1}\right)=\Pr\left(\gamma^{2}=\alpha
^{-1}\cdot\beta\right),
\]
where $\gamma=\beta^{-1}\cdot a_{n}\cdot\alpha\cdot\beta$. Since, $a_{n}$ is
independent on $\alpha$ and on $\beta$, we have $\gamma$ is independent on
$\alpha$ and on $\beta$ as well. Hence, ${\Pr}_{\pi}(G)={\Pr}_{\theta}(G)$.
\end{proof}

\begin{example}
Consider the following signed permutation $\pi\in B_{n}$

\begin{itemize}
\item $\pi=\langle-6\;\;-2\;\;+3\;\;+1\;\;+5\;\;+4\rangle$. Then
\[
\pi^{\ast}=(+0, +4, +5, +1, +3, -2, -6)\cdot(+6, +2, -3, -1, -5, -4, -0).
\]
We can perform $\left(  [2]--[3]\right)  $-sign-change operation on $\pi$ and
obtain a new permutation $\theta$. We have
\[
\theta^{\ast}=(+0, +4, +5, +1, -3, +2, -6)\cdot(+6, -2, +3, -1, -5, -4,
-0).
\]
Hence, $\theta=\langle-6\;\;+2\;\;-3\;\;+1\;\;+5\;\;+4\rangle$;

\item $\pi=\langle+3\;\;-4\;\;+5\;\;+2\;\;-1\;\;-6\rangle$. Then
\[
\pi^{\ast}=(+0, -6, -1, +2, +5, -4, +3)\cdot(-3, +4, -5, -2, +1, +6, -0).
\]
We can perform $\left(  [6]--[0]\right)  $-sign-change operation on $\pi$ and
obtain a new permutation $\theta$. We have
\[
\theta^{\ast}=(-0, +6, -1, +2, +5, -4, +3)\cdot(-3, +4, -5, -2, +1, -6,
+0).
\]
Hence, $\theta=\langle-6\;\;+1\;\;-2\;\;-5\;\;+4\;\;-3\rangle$.
\end{itemize}
\end{example}

\begin{definition}
\label{x-y-equiv} Two signed permutations $\pi$ and $\theta$ in $B_{n}$ are
considered $\left(  [.]--[.]\right)  $-equivalent, if it is possible to obtain
either $\pi$ from $\theta$ or $\theta$ from $\pi$ by a finite sequence of
$\left(  [x]--[y]\right)  $-exchange, ~$\left(  [x]--[y]\right)  $-cyclic, or
$\left(  [|x|]--[|x|+1]\right)  $-sign-change operations.
\end{definition}

\begin{proposition}
\label{m_equal_zero} Let $\theta$ and $\pi$ be two signed permutations in $B_{n}$, which
are $\left(  [.]--[.]\right)  $-equivalent, then $m(\pi)=0$ if and only if
$m(\theta)=0$.
\end{proposition}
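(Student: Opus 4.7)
The plan is to induct on the length of the sequence of operations witnessing the $\left([.]--[.]\right)$-equivalence between $\pi$ and $\theta$. Since every operation in Definition \ref{x-y-equiv} is reversible (an $\left([x]--[y]\right)$-exchange is undone by an $\left([-(x+1)]--[-y]\right)$-exchange by the last bullet of Proposition \ref{obs.exchange}, a cyclic operation is undone by its natural reverse, and by Definition \ref{x-x+1} a sign-change is an involution), it suffices to show that each single one of these three operations, applied to a permutation with $m=0$, produces a permutation with $m=0$.

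For the exchange operation, this is nothing but the second bullet of Proposition \ref{obs.exchange}, which states exactly that $m(\pi)=0$ implies $m(\theta)=0$ (in fact, $|\theta|$ is then obtained from $|\pi|$ by the unsigned exchange of \cite{CGLS}). The reverse direction follows from reversibility as above.

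For the cyclic operation, I would invoke Proposition \ref{bullet-bullet-m}: when $m(\pi)=0$, the two cycles of $\pi^{\ast}$ split cleanly by sign, the cycle through $+0$ consisting of sign-$(+)$ elements of $H_{n}$ and the cycle through $-0$ of sign-$(-)$ elements. Of the three templates in Definition \ref{cyclic-cdot}, cases 2 and 3 require a triple whose endpoints have opposite signs ($-x-1$ and $x$ in case 2, $-x+1$ and $x$ in case 3), hence cannot occur inside a single cycle of $\pi^{\ast}$ under this sign-separation. So only case 1 is available, and then all three of $x+1$, $y$, $x$ lie in the positive cycle, forcing $\sign(y)=(+)$. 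The first bullet of Proposition \ref{obs.cyclic} now gives $m(\theta)=m(\pi)=0$.

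For the sign-change operation, the key observation is that no such operation is even applicable to a $\pi$ with $m(\pi)=0$. Both alternatives in Definition \ref{x-x+1} demand that a sign-$(+)$ vertex be adjacent in $\pi^{\ast}$ to a sign-$(-)$ vertex (for example $|x|\to -|x|+1$ with $|x|$ positive and $-|x|+1$ negative), which contradicts the sign-separation of the two cycles. The main point of care is the wrap-around cases $|x|\in\{0,n\}$, where one must unfold the conventions $(+n)+1=+0$, $(-0)+1=-n$, $(-1)+1=-0$ of Definition \ref{hultman} and check that the required adjacent pair still straddles the sign divide; this is a short case analysis and is really the only obstacle in the argument. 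Since sign-change is therefore inapplicable at every step with $m=0$, it preserves $m=0$ vacuously, and by reversibility it also cannot manufacture $m=0$ from $m\neq 0$. Combining the three cases gives the desired equivalence $m(\pi)=0\iff m(\theta)=0$.
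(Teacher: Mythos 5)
Your proof is correct and follows essentially the same route as the paper's: reduce to a single operation in the equivalence chain and invoke Propositions \ref{obs.exchange}, \ref{obs.cyclic}, and \ref{obs.sign.change}. In fact you supply more detail than the paper does, since you explicitly rule out the exceptional cases of those propositions (a cyclic operation with $\sign(y)=(-)$, and a sign-change with $|x|\in\{0,n\}$) by observing that the two cycles of $\pi^{\ast}$ are sign-separated when $m(\pi)=0$.
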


\begin{proof}
Let $\theta$ and $\pi$ be two signed permutations in $B_{n}$, which are $\left(
[.]--[.]\right)  $-equivalent. Then either $\theta$ is obtainable from $\pi$
or $\pi$ is obtainable from $\theta$ by a sequence of $\left(
[x]--[y]\right)  $-exchange, $\left(  [x]--[y]\right)  $-cyclic, and $\left(
[|x|]--[|x|+1]\right)  $-sign-change operations. Therefore, it is enough to
prove the statement when $\theta$ is obtainable from $\pi$ either by one $\left(
[x]--[y]\right)  $-exchange, or by one $\left(  [x]--[y]\right)  $-cyclic, or
by one $\left(  [|x|]--[|x|+1]\right)  $-sign-change operation. Then by
Propositions \ref{obs.exchange}, \ref{obs.cyclic}, \ref{obs.sign.change}, we
conclude $m(\theta)=0$ if and only if $m(\pi)=0$.
\end{proof}

\begin{lemma}
\label{equiv-hultman} Let $\pi$ and $\theta$ in $B_{n}$, which are $\left(
[.]--[.]\right)  $-equivalent, then

\begin{itemize}
\item $s(\pi)=s(\theta)$;

\item ${\Pr}_{\pi}(G)={\Pr}_{\theta}(G)$ in every finite group $G$.
\end{itemize}
\end{lemma}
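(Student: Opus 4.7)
The plan is to reduce the lemma to the single-step case and then invoke the three propositions already proved for the individual operations. Since $\pi$ and $\theta$ are $\left([.]--[.]\right)$-equivalent, Definition \ref{x-y-equiv} gives a finite sequence
\[
\pi = \sigma_0, ~\sigma_1, ~\sigma_2, ~\ldots, ~\sigma_N = \theta
\]
of signed permutations in $B_n$ such that for each $0 \leq k \leq N-1$, the permutation $\sigma_{k+1}$ is obtained from $\sigma_k$ (or vice versa) by one of the three basic operations: a $\left([x]--[y]\right)$-exchange, a $\left([x]--[y]\right)$-cyclic operation, or a $\left([|x|]--[|x|+1]\right)$-sign-change operation. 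Both conclusions are symmetric and transitive relations, so it suffices to prove them for a single step $\sigma_k \to \sigma_{k+1}$ and chain them together.

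For a single step, the first conclusion $s(\sigma_k) = s(\sigma_{k+1})$ is exactly the content of the relevant bullet in Proposition \ref{obs.exchange} (for an exchange), Proposition \ref{obs.cyclic} (for a cyclic operation), or Proposition \ref{obs.sign.change} (for a sign-change). Similarly, the equality ${\Pr}_{\sigma_k}(G) = {\Pr}_{\sigma_{k+1}}(G)$ in every finite group $G$ is the final bullet of each of these three propositions. So, for every $0 \leq k \leq N-1$ we have
\[
s(\sigma_k) = s(\sigma_{k+1}) \quad \text{and} \quad {\Pr}_{\sigma_k}(G) = {\Pr}_{\sigma_{k+1}}(G).
\]

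An induction on $N$ (the length of the connecting sequence) then yields $s(\pi) = s(\sigma_0) = s(\sigma_N) = s(\theta)$ and ${\Pr}_{\pi}(G) = {\Pr}_{\theta}(G)$ for every finite group $G$. There is no real obstacle here: all the substantive work — verifying that each elementary operation preserves the number of alternating cycles in $\Gr(\pi)$ and preserves the signed generalized commuting probability — has been carried out in Propositions \ref{obs.exchange}, \ref{obs.cyclic}, and \ref{obs.sign.change}. The only thing one must be careful about is the symmetry of the equivalence relation, which is accommodated by the ``inverse'' operation clauses already built into each of the three propositions (for example, the last bullet of Proposition \ref{obs.exchange} pairing an $\left([x]--[y]\right)$-exchange with its $\left([-(x+1)]--[-y]\right)$-inverse, and the self-inverse nature of the cyclic and sign-change operations noted in Definitions \ref{cyclic-cdot} and \ref{x-x+1}).
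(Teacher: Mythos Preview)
Your proof is correct and follows essentially the same approach as the paper's own proof: reduce to the single-step case via the definition of $\left([.]--[.]\right)$-equivalence, invoke Propositions \ref{obs.exchange}, \ref{obs.cyclic}, and \ref{obs.sign.change} for each elementary operation, and then chain the equalities along the connecting sequence. Your version is simply more explicit about the induction and about handling the symmetry of the equivalence relation, which the paper leaves implicit.
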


\begin{proof}
By Propositions \ref{obs.exchange}, \ref{obs.cyclic}, and
\ref{obs.sign.change} $s(\pi)=s(\theta)$ and ${\Pr}_{\pi}(G)={\Pr}_{\theta
}(G)$ if $\theta$ is obtainable from $\pi$ either by one $\left([x]--[y]\right)$-exchange, or by one $\left([x]--[y]\right)$-cyclic, or
by one $\left([|x|]--[|x|+1]\right)$-sign-change operation. Therefore, the
results hold in case, where $\theta$ is obtainable from $\pi$ by any finite
number of $\left(  [x]--[y]\right)  $-exchange, or ~$\left([x]--[y]\right)$-cyclic or $\left([|x|]--[|x|+1]\right)$-sign-change operations.
\end{proof}

\section{The main result}

\label{main}

In this section, we prove the main result of the paper, whereby Theorem
\ref{main-theorem}, we show the connection between $\Pr_{\pi}(G)$ and
$s(\pi)$, the number of the alternating cycles of $\pi$ in the breakpoint
graph $\Gr(\pi)$ for every $\pi\in B_{n}$. Theorem \ref{main-theorem} is a
generalization of the main theorem of \cite{CGLS}, where it has been proved
that for $\pi\in S_{n}$, the generalized commuting probability, $\Pr_{\pi}(G)$
depends on the number of the alternating cycles in the cycle graph of $\pi$.
The proof of Theorem \ref{main-theorem} makes use of several technical lemmas.

\begin{lemma}
\label{add-n} Let $\pi^{\prime}\in B_{n-1}$ be a signed permutation, and let
$\pi$ be a signed permutation in $B_{n}$ such that

\begin{itemize}
\item $\pi_{i}=\pi^{\prime}_{i}$ for every $i$ such that $|i|\leq n-1$;

\item $\pi_{+n}=-n$.
\end{itemize}

Then $s(\pi)=s(\pi^{\prime})$.
\end{lemma}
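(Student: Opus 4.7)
The plan is to describe $\Gr(\pi)$ as a local modification of $\Gr(\pi')$ that inserts the two extra vertices $[+n]$ and $[-n]$ into a single alternating cycle, while leaving every other alternating cycle unchanged.

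First, I would pin down the small set of edges that actually differ between the two breakpoint graphs. Under the implicit convention $\pi_{\pm 0}=\pm 0$ needed for Definition \ref{hultman} to be well-posed, every gray edge of $\Gr(\pi')$ except $[+(n-1)]\leftrightsquigarrow[-0]$ still appears in $\Gr(\pi)$; the wraparound in $H_n$ replaces that single edge by the two gray edges $[+(n-1)]\leftrightsquigarrow[-n]$ and $[+n]\leftrightsquigarrow[-0]$. Likewise, since $\pi_i=\pi'_i$ for $|i|\leq n-1$, the black edges coming from indices $i=+0,+1,\ldots,+(n-2)$ are identical in the two graphs; the black edge from index $i=+(n-1)$ changes from $[\pi'_{+(n-1)}]\longleftrightarrow[-0]$ in $\Gr(\pi')$ to $[\pi'_{+(n-1)}]\longleftrightarrow[+n]$ in $\Gr(\pi)$ (because $\pi_{-n}=+n$), and the new index $i=+n$ contributes one extra black edge $[-n]\longleftrightarrow[-0]$ (because $\pi_{+n}=-n$ and $\pi_{-0}=-0$).

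Second, I would trace the alternating cycle of $\Gr(\pi')$ through $[-0]$. Because $[-0]$ has exactly one incident gray edge (to $[+(n-1)]$) and one incident black edge (to $[\pi'_{+(n-1)}]$), locally that cycle reads
\[
X_1\longleftrightarrow[+(n-1)]\leftrightsquigarrow[-0]\longleftrightarrow[\pi'_{+(n-1)}]\leftrightsquigarrow X_2,
\]
where $X_1$ is the black neighbor of $[+(n-1)]$ and $X_2$ the gray neighbor of $[\pi'_{+(n-1)}]$ (both possibly coinciding with earlier vertices when the cycle is short, e.g.\ when $\pi'_{+(n-1)}=+(n-1)$). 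All edges outside this local segment persist in $\Gr(\pi)$; substituting the new edges from the first step produces the alternating chain
\[
X_1\longleftrightarrow[+(n-1)]\leftrightsquigarrow[-n]\longleftrightarrow[-0]\leftrightsquigarrow[+n]\longleftrightarrow[\pi'_{+(n-1)}]\leftrightsquigarrow X_2,
\]
which is simply the old cycle with $[-n]$ and $[+n]$ spliced in. Every other alternating cycle of $\Gr(\pi')$ uses only unchanged edges and hence reappears unchanged in $\Gr(\pi)$, so the induced correspondence between alternating cycles is a bijection and $s(\pi)=s(\pi')$.

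The main obstacle is purely clerical: one has to be careful with the two distinct wraparound conventions in $H_{n-1}$ and $H_n$ and with the implicit extension $\pi_{\pm 0}=\pm 0$ used throughout Definition \ref{hultman}. Once the four modified edges are explicitly written down, the lemma follows from the direct alternating-path surgery above.
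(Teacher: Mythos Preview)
Your proof is correct and follows exactly the same approach as the paper: both arguments identify the single local segment of the breakpoint graph around $[-0]$ that differs between $\Gr(\pi')$ and $\Gr(\pi)$, and observe that in $\Gr(\pi)$ the two new vertices $[+n]$ and $[-n]$ are spliced into that segment without changing any other cycle. Your version is more explicit about the edge bookkeeping and the wraparound conventions, but the underlying surgery on the alternating cycle through $[-0]$ is identical to the paper's.
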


\begin{proof}
Look at the arrows of $\Gr(\pi)$ and $\Gr(\pi^{\prime})$. Since $\pi_{i}%
=\pi^{\prime}_{i}$ for every $i$ such that $|i|\leq n-1$, all the arrows of
$\Gr(\pi)$ and $\Gr(\pi^{\prime})$ are the same, apart from the following segment:

\begin{itemize}
\item In $\Gr(\pi^{\prime})$ the following holds:
\[
\ldots\leftrightsquigarrow[\pi_{+(n-1)}]\longleftrightarrow
[-0]\leftrightsquigarrow[+(n-1)]\longleftrightarrow\ldots .
\]

\item In $\Gr(\pi)$ the following holds:
\[
\ldots\leftrightsquigarrow[\pi_{+(n-1)}]\longleftrightarrow
[+n]\leftrightsquigarrow[-0]\longleftrightarrow[-n]\leftrightsquigarrow
[+(n-1)]\longleftrightarrow\ldots .
\]

\end{itemize}

Hence, $s(\pi)=s(\pi^{\prime})$.
\end{proof}

\begin{lemma}
\label{pi-pi} Let $\pi^{\prime}$ and $\theta^{\prime}$ be two $\left(
[x]--[y]\right)  $-equivalent signed permutations in $B_{n-1}$. Let $\pi$ and
$\theta$ be two signed permutations in $B_{n}$ such that

\begin{itemize}
\item $\pi^{\prime}_{i}=\pi_{i}$ and $\theta^{\prime}_{i}=\theta_{i}$ for
every $i$ such that $|i|\leq n-1$;

\item $\pi_{+n}=\theta_{+n}=-n$.
\end{itemize}

Then $\pi$ and $\theta$ are $\left(  [.]--[.]\right)  $-equivalent as well.
\end{lemma}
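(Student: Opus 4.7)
The plan is to induct on the number of $\left([.]--[.]\right)$-operations connecting $\pi'$ to $\theta'$ and thereby reduce the argument to the case where $\theta'$ is obtained from $\pi'$ by a single operation of exchange, cyclic, or sign-change type. The base case $\pi' = \theta'$ forces $\pi = \theta$ immediately.

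The structural input I intend to extract from the proof of Lemma~\ref{add-n} is that $\pi^{\ast}$ is obtained from $\left(\pi'\right)^{\ast}$ by inserting the vertex $-n$ into the cycle containing $+0$ at the position immediately before $\pi'_{+(n-1)}$, so that $\pi^{\ast}(+0) = -n$ and $\pi^{\ast}(-n) = \pi'_{+(n-1)}$, and by inserting $+n$ into the cycle containing $-0$ at the position immediately after $\pi'_{-(n-1)}$, so that $\pi^{\ast}(\pi'_{-(n-1)}) = +n$ and $\pi^{\ast}(+n) = -0$; at all other arguments, $\pi^{\ast}$ agrees with $\left(\pi'\right)^{\ast}$. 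Equivalently, $\Gr(\pi)$ is obtained from $\Gr(\pi')$ by replacing the single vertex $[-0]$ with the three-vertex path $[+n] \leftrightsquigarrow [-0] \leftrightarrow [-n]$, while all remaining arrows are identical.

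For the single-operation step I plan to apply the same operation to $\pi$, verify that the resulting $\theta^{\ast}$ agrees with $\left(\theta'\right)^{\ast}$ outside the inserted segment, and check that $\theta_{+n} = -n$, so that $\theta$ is precisely the signed permutation in the lemma statement. For a $\left([x]--[y]\right)$-cyclic operation, Definition~\ref{cyclic-cdot} relabels only indices $|t| \leq n-1$, so the inserted vertices $\pm n$ are untouched; for a $\left([|x|]--[|x|+1]\right)$-sign-change with $\pi' \in B_{n-1}$, the inequality $|x|+1 \leq n-1$ ensures the operation does not reach $\pm n$; for an $\left([x]--[y]\right)$-exchange, the third bullet of Proposition~\ref{obs.exchange} writes $\theta^{\ast}$ as a product of $\pi^{\ast}$ with two $3$-cycles whose support, in the generic case, lies entirely inside $H_{n-1}$, so both $\pm n$ are fixed and $\theta^{\ast}(+0) = -n$ is preserved.

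The main obstacle I anticipate is the boundary subcase of the exchange, where one of $x$, $y$, $x+1$, or $-y$ coincides with a vertex adjacent to $\pm n$ in $\Gr(\pi)$. A representative instance is $y = +0$: the black neighbor $w$ of $-0$ in $\Gr(\pi)$ is then $-n$ rather than the $w' = \pi'_{+(n-1)}$ appearing in $\Gr(\pi')$, and the lifted $3$-cycle $(x, +0, -n)$ no longer fixes $-n$. I plan to handle such subcases by composing the lifted operation with a short corrective sequence of exchanges and sign-changes designed to move $-n$ back to the position just before $\theta'_{+(n-1)}$ in the cycle of $+0$, thereby restoring $\theta^{\ast}(+0) = -n$ while leaving the $H_{n-1}$-part of $\theta^{\ast}$ in agreement with $\left(\theta'\right)^{\ast}$. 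Since this composite is still a finite sequence of $\left([.]--[.]\right)$-operations, the desired equivalence $\pi \sim \theta$ will follow once the positions of $\pm n$ are tracked in the two cycles of $\pi^{\ast}$ under the $3$-cycle action, reducing the verification to a bounded number of subcases.
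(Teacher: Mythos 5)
Your reduction to a single operation and your description of how $\pi^{\ast}$ is obtained from $\left(\pi'\right)^{\ast}$ (inserting $-n$ immediately after $+0$ and $+n$ immediately after $\pi'_{-(n-1)}$) are correct and coincide with the paper's setup; the generic case, where the operation on $\pi'$ does not involve the wrap-around at $n-1$, also lifts exactly as you say. The gap is that you have misidentified which boundary cases exist, and you have not constructed the corrective sequences, which is where essentially all of the content of this lemma lies. For the cyclic operation, the issue is applicability, not relabelling: a $\left([x]--[y]\right)$-cyclic operation on $\pi'\in B_{n-1}$ with $x=+(n-1)$ (or $x=-(n-1)$) is legitimate and requires $\left(\pi'\right)^{\ast}$ to contain $+0\rightarrow y\rightarrow x$, using the $B_{n-1}$ convention $(n-1)+1=+0$; but $\pi^{\ast}$ contains $+0\rightarrow -n\rightarrow y$, so the same operation cannot be performed on $\pi$, and the paper must replace it by a composite of one cyclic and two exchange operations. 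Likewise, your restriction $|x|+1\le n-1$ for the sign-change silently excludes the legitimate $\left([n-1]--[0]\right)$-sign-change in $B_{n-1}$ (compare the paper's own $\left([6]--[0]\right)$-sign-change example in $B_{6}$); this case again fails to lift directly and is handled in the paper by two sign-changes followed by a cyclic operation.

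For the exchange, the one boundary case you do acknowledge, you only promise ``a short corrective sequence of exchanges and sign-changes'' without exhibiting it. Since the assertion to be proved is precisely that $\pi$ and $\theta$ are connected by a finite sequence of the three admissible operations, the existence of such corrective sequences cannot be taken on faith: one must write them down and verify that they terminate at the signed permutation $\theta$ satisfying $\theta_{+n}=-n$ and $\theta_{i}=\theta'_{i}$ for $|i|\le n-1$. (For instance, when $x=+(n-1)$ the paper obtains $\theta$ by a $\left([-0]--[\pi_{+(n-1)}]\right)$-exchange followed by a $\left([+(n-1)]--[\pi_{+(n-1)}]\right)$-exchange, and analogous explicit two- or three-step sequences are needed in each remaining subcase.) As it stands, your argument covers only the generic case and defers the entire boundary analysis, so it is a correct plan rather than a proof.
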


\begin{proof}
It is enough to show that $\pi$ and $\theta$ are $\left(  [x]--[y]\right)
$-equivalent in case $\theta^{\prime}$ is obtainable by either performing a single
$\left(  [x]--[y]\right)  $-exchange, or a single $\left(  [x]--[y]\right)
$-cyclic, or a single $\left(  [|x|]--[|x|+1]\right)  $-sign-change operation
on $\pi^{\prime}$. If $\theta^{\prime}$ is obtainable by performing a $\left(
[x]--[y]\right)  $-exchange operation on $\pi^{\prime}$ for $x\neq+(n-1)$,
then we obtain $\theta$ by performing a $\left(  [x]--[y]\right)  $-exchange
operation on $\pi$ for the same $x$ and $y$. Therefore, assume $\theta
^{\prime}$ is obtainable by performing a $\left(  [+(n-1)]--[y]\right)  $-exchange
operation on $\pi^{\prime}$. Then by Proposition \ref{obs.exchange},
$y=\pi^{\prime}_{+(n-1)}$ and either $\pi^{\prime}_{i}=+(n-1)$ and then
\[
\theta^{\prime}=\langle\pi_{+1}\;\;\cdots\;\;+(n-1)\;\;\pi_{+(n-1)}%
\;\;\pi_{i+1}\;\;\cdots\;\;\pi_{+(n-2)}\rangle
\]
or $\pi_{i}=-(n-1)$ and then
\[
\theta^{\prime}=\langle\pi_{+1}\;\;\cdots\;\;\pi_{-(n-1)}\;\;-(n-1)\;\;\pi
_{i+1}\;\;\cdots\;\;\pi_{+(n-2)}\rangle.
\]
Now, by performing a $\left(  [-0]--[\pi_{+(n-1)}]\right)  $-exchange operation
on $\pi$, we obtain
\[
\tau=\langle\pi_{+1}\;\;\cdots\;\;\pi_{+(n-2)}\;\;-n\;\;\pi_{-(n-1)}\rangle.
\]
Then by performing a $\left(  [+(n-1)]--[\pi_{+(n-1)}]\right)  $-exchange
operation on $\tau$, we obtain $\theta$. Now assume, $\theta^{\prime}$ is
obtainable by performing a $\left(  [x]--[y]\right)  $-cyclic operation on $\pi^{\prime}%
$. If $x\neq n-1$ or $x\neq-(n-1)$, then one can obtain $\theta$ by performing a $\left(
[x]--[y]\right)  $-cyclic operation on $\pi$ for the same $x$ and $y$.
Therefore, assume either $x=n-1$ or \newline$x=-(n-1)$. If $x=n+1$ and
${\pi^{\prime}}^{\ast}_{0}=y$, then
\[
\pi^{\ast}=(+0, -n, y, n-1, \ldots)\cdot(\ldots, -n+1, -y, +n, -0).
\]
In case $\sign\left(y\right)  =(+)$, we obtain theta by performing first
a $\left(  [+(n-1)]--[y]\right)  $-cyclic, then  a $\left([y]--[+n]\right)
$-exchange, and finally a $\left(  [+0]--[-y]\right)  $-exchange
operation.\newline In case $\sign\left(y\right)=(-)$, we obtain theta by
performing first $\left(  [+(n-1)]--[y]\right)  $-cyclic, then $\left(
[y-1]--[+n]\right)  $-exchange, and finally $\left(  [+n]--[-y+1]\right)
$-exchange operation. By similar arguments, $\theta$ is obtainable from $\pi$
in all the rest of the cases, where $\theta^{\prime}$ is obtainable from
$\pi^{\prime}$ by performing a $\left(  [x]--[y]\right)  $-cyclic operation. Now assume,
$\theta^{\prime}$ is obtainable by a $\left(  [|x|]--[|x|+1]\right)
$-sign-change operation on $\pi^{\prime}$. If $|x|\neq n-1$, then one can
obtain $\theta$ by performing a $\left(  [|x|]--[|x|+1]\right)  $-sign-change operation
on $\pi$ for the same $|x|$. Therefore, assume $|x|=n-1$. Then we have
\[
\pi^{\ast}=(+0, -n, -(n-1), \pi_{+(n-2)}, \ldots, \pi_{+1})\cdot(\pi_{-1},
\ldots, \pi_{-(n-2)}, +(n-1), +n, -0).
\]
By performing a $\left(  [n]--[0]\right)  $-sign-change operation on $\pi$, we
obtain $\mu$ such that
\[
\mu^{\ast}=(+0, \pi_{-1}, \ldots, \pi_{-(n-2)}, +(n-1), -n)\cdot(+n,
-(n-1), \pi_{+(n-2)}, \ldots, \pi_{+1}, -0).
\]
By performing a $\left(  [n-1]--[n]\right)  $-sign-change operation on $\mu$, we
obtain $\eta$ such that
\[
\eta^{\ast}=(+0, \pi_{-1}, \ldots, \pi_{-(n-2)}, -(n-1), +n)\cdot(-n,
+(n-1), \pi_{+(n-2)}, \ldots, \pi_{+1}, -0).
\]
Finally, by performing a $\left(  [+(n-1)]--[-0]\right)  $-cyclic operation on
$\eta$, we obtain the desired $\theta$, where:
\[
\theta^{\ast}=(+0, -n, \pi_{-1}, \ldots, \pi_{-(n-2)}, -(n-1))\cdot(+(n-1),
\pi_{+(n-2)}, \ldots, \pi_{+1}, +n, -0).
\]

\end{proof}

\begin{lemma}
\label{0-n} Let $\pi\in B_{n}$ be a signed permutation such that $\pi_{i}=-n$,
for some \newline$1\leq i\leq n$, then there exists $\theta\in B_{n}$ such
that the following holds:

\begin{itemize}
\item $\theta_{+n}=-n$;

\item $\pi$ is obtainable by a sequence of $n-i$ consecutive $\left(
[-0]--[y]\right)  $-exchange operations starting on $\theta$, such
that $\theta_{j}=\pi_{j}$ for every $j$ such that $+1\leq j<i$;

\item $\theta$ and $\pi$ are $\left(  [.]--[.]\right)  $-equivalent;

\item $s(\theta)=s(\pi)$.
\end{itemize}
\end{lemma}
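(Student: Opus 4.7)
The plan is to induct on $n - i$. The base case $n - i = 0$ holds trivially, by taking $\theta = \pi$ with an empty sequence of exchanges.

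For the inductive step with $i < n$, I would construct an intermediate signed permutation $\pi^{(1)} \in B_n$ by setting $\pi^{(1)}_{+j} = \pi_{+j}$ for $1 \leq j \leq i-1$, $\pi^{(1)}_{+i} = -\pi_{+n}$, $\pi^{(1)}_{+(i+1)} = -n$, and $\pi^{(1)}_{+j} = \pi_{+(j-1)}$ for $i+2 \leq j \leq n$. Because $|\pi_{+i}| = n$, the absolute values $|\pi_{+1}|, \ldots, |\pi_{+(i-1)}|, |\pi_{+(i+1)}|, \ldots, |\pi_{+n}|$ form a permutation of $\{1, \ldots, n-1\}$; reinserting $n$ (now carried by the entry $-n$) at position $+(i+1)$ and swapping $-\pi_{+n}$ into position $+i$ therefore produces a valid element of $B_n$.

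Next I would verify that $\pi$ arises from $\pi^{(1)}$ by a single $\left([-0]--[y]\right)$-exchange with $y = -\pi_{+n}$. Inspection of $\Gr(\pi^{(1)})$ shows that the gray edge $[-0] \leftrightsquigarrow [+n]$ is flanked by the black edges $[-0] \longleftrightarrow [\pi_{+(n-1)}]$ and $[+n] \longleftrightarrow [-\pi_{+n}]$, so Definition \ref{def.exchange} applies with $x = -0$, $y = -\pi_{+n}$, $z = -\pi_{+(n-1)}$, and $w = \pi_{+(i-1)}$; the nondegeneracy conditions $y \neq x$ and $y \neq x+1$ follow from $|\pi_{+n}| \notin \{0, n\}$, where $|\pi_{+n}| \neq n$ because $|\pi_{+i}| = n$. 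The conjugation formula $\pi^{\ast} = (x, y, w)(-y, -(x+1), -z) \cdot \pi^{(1), \ast}$ from Proposition \ref{obs.exchange} then performs the required cycle surgery: $y$ leaves its slot between $-n$ and $\pi_{+(i-1)}$ in the first cycle of $\pi^{(1), \ast}$, so $-n$ slides one step down into position $+i$, while $-y = \pi_{+n}$ is inserted between $+0$ and $\pi_{+(n-1)}$, reindexing the entries $\pi_{+(i+1)}, \ldots, \pi_{+n}$ precisely as claimed. Proposition \ref{obs.exchange} then delivers $s(\pi^{(1)}) = s(\pi)$ together with the $\left([.]--[.]\right)$-equivalence of $\pi^{(1)}$ and $\pi$.

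Finally I would invoke the induction hypothesis on $\pi^{(1)}$ (whose entry $-n$ now sits at position $+(i+1)$) to produce $\theta \in B_n$ with $\theta_{+n} = -n$, with $\theta_{+j} = \pi^{(1)}_{+j} = \pi_{+j}$ for $1 \leq j \leq i-1$, reachable from $\theta$ by $n - i - 1$ successive $\left([-0]--[y]\right)$-exchanges, $\left([.]--[.]\right)$-equivalent to $\pi^{(1)}$, and with $s(\theta) = s(\pi^{(1)})$. Prepending the exchange $\pi^{(1)} \to \pi$ to this chain produces the required sequence of $n - i$ exchanges from $\theta$ to $\pi$; transitivity of $\left([.]--[.]\right)$-equivalence and the chain $s(\theta) = s(\pi^{(1)}) = s(\pi)$ establish all four conclusions. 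The one delicate step is the cycle-surgery verification that the single exchange carries $\pi^{(1)}$ to $\pi$; once the multiplication formula of Proposition \ref{obs.exchange} is unpacked, it reduces to a careful but direct tracking of how the two disjoint $3$-cycles act on the explicit cycle representation of $\pi^{(1), \ast}$.
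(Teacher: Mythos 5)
Your proposal is correct and is essentially the paper's argument: the paper exhibits the same single $\left([-0]--[y]\right)$-exchange that slides $-n$ one position (computing its effect on the first cycle of $\pi^{\ast}$) and then iterates it $n-i$ times, while you merely formalize the iteration as an induction on $n-i$ with the predecessor $\pi^{(1)}$ written out explicitly. The only nitpick is that your identification $z=-\pi_{+(n-1)}$ assumes $i\leq n-2$ (for $i=n-1$ one gets $z=+n$), but this is harmless since the nondegeneracy check $y\neq z$ still goes through.
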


\begin{proof}
Let $\pi\in B_{n}$ be a signed permutation such that $\pi_{i}=-n$ for some
$+2\leq i\leq+n$. Then
\[
\pi^{\ast}=(+0, \pi_{+n}, \ldots, \pi_{i+1}, -n, \pi_{i-1}, \ldots,
\pi_{+1})\cdot(\pi_{-1}, \ldots, \pi_{-(i-1)}, +n, \pi_{-(i+1)}, \ldots,
\pi_{-n}, -0).
\]
Now, by performing a $\left(  [-0]--[\pi_{i-1}]\right)  $-exchange operation, we
obtain $\mu$ such that
\begin{align*}
\mu^{\ast} & =(+0, \pi_{-(i-1)}, \pi_{+n}, \ldots, \pi_{i+1}, -n, \pi
_{i-2}, \ldots, \pi_{+1})\cdot\\
& (\pi_{-1}, \ldots, \pi_{-(i-2)}, +n, \pi_{-(i+1)}, \ldots, \pi_{-n},
\pi_{i-1}, -0).
\end{align*}
Therefore, we obtain $\mu$ such that

\begin{itemize}
\item $\mu_{i-1}=\pi_{i}=-n$;

\item $\mu_{j}=\pi_{j}$ for $j$ such that $+1\leq j<i-1$;

\item $\mu_{j}=\pi_{j+1}$ for $j$ such that $i-1\leq j\leq+(n-1)$;

\item $\theta_{+n}=\pi_{-(i-1)}$.
\end{itemize}

Therefore, we conclude that every $\pi\in B_{n}$ such that $\pi_{i}=-n$ for
some $+1\leq i\leq+n$, is obtainable from $\theta$ such that $\theta_{+n}=-n$
by performing $\left(  [-0]--[y]\right)  $-exchange operations $n-i$ times
repeatedly, starting on $\theta$ such that $\pi_{j}=\theta_{j}$ for every $j$ such that
$+1\leq j<i$. Hence, we get the rest of the results of the lemma.
\end{proof}

\begin{lemma}
\label{0-n-equiv} Let $\pi\in B_{n}$ be a signed permutation such that
$\sign(\pi_{i})=(+)$ and $\pi_{i-1}=\pi_{-i}+1$ for some $+2\leq i\leq+n$, and
there exists $j$ such that $+1\leq j<i$ and $\pi_{j}=+n$. Then there exists a signed
permutation $\theta\in B_{n}$ such that the following holds:

\begin{itemize}
\item $\theta_{+n}=-n$;

\item $\theta_{k}=\pi_{i-1}$ for some $+1\leq k\leq+(n-1)$;

\item $\theta$ and $\pi$ are $\left(  [.]--[.]\right)  $-equivalent;

\item $s(\theta)=s(\pi)$.
\end{itemize}
\end{lemma}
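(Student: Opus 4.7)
The plan is to construct $\theta$ by first producing a $\left([.]--[.]\right)$-equivalent signed permutation that contains the value $-n$, and then invoking Lemma \ref{0-n}.

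Setting $a := \pi_{i}$, the hypothesis gives $\pi_{i-1}=-(a-1)$, together with $a\geq 2$ (otherwise $\pi_{i-1}=0$) and $a\leq n-1$ (since $\pi_{j}=+n$ with $j\neq i$). In $\pi^{\ast}$ the cycle containing $+0$ then has the transition $a\to -(a-1)$ (this is $\pi_{i}\to\pi_{i-1}$), and by the $B_{n}$-symmetry of $\pi^{\ast}$ the cycle containing $-0$ has $(a-1)\to -a$. This is exactly the pattern required by Definition \ref{x-x+1} for a $\left([a-1]--[a]\right)$-sign-change, and the constraints $1\leq a-1\leq n-2$ make that operation admissible. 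Performing it yields $\mu\sim\pi$ with $\mu_{i}=-a$, $\mu_{i-1}=+(a-1)$, and $\mu_{j}=+n$ unchanged (every other position $\ell$ with $|\pi_{\ell}|\in\{a-1,a\}$ also has its sign flipped). By Proposition \ref{obs.sign.change} this preserves both $s$ and the equivalence class.

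Next I would iterate: using the new negative value at position $i$ together with $+n$ still at a position $j<i$, apply a sequence of $\left([x]--[y]\right)$-exchange and $\left([x]--[y]\right)$-cyclic operations that gradually ``raises'' the negative sign toward $|x|=n$. Each intermediate step is chosen so as to create the local structure in $\pi^{\ast}$ needed to enable either a further sign-change at a larger index $|x|$ or an exchange that brings $+n$ into a configuration adjacent, in $\pi^{\ast}$, to $\pm 0$; propagating this cascade culminates in an equivalent $\pi'$ containing the value $-n$ at some position $i'$. Propositions \ref{obs.exchange} and \ref{obs.cyclic} guarantee that each step preserves $s$ and $\left([.]--[.]\right)$-equivalence.

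Finally, Lemma \ref{0-n} applied to $\pi'$ produces $\theta$ with $\theta_{+n}=-n$. Tracking the value $\pi_{i-1}$ through the sign-change (where it becomes $\mu_{\ell_{1}}=\pi_{i-1}$ at the position $\ell_{1}$ originally holding $+(a-1)$) and then through the exchange / cyclic operations of the middle step (which permute values among positive positions but cannot send $\pi_{i-1}$ to position $+n$, since that position ends up occupied by $-n$) yields $\theta_{k}=\pi_{i-1}$ for some $1\leq k\leq n-1$. Equivalence $\theta\sim\pi$ and $s(\theta)=s(\pi)$ then follow from Lemma \ref{equiv-hultman}.

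The hard part will be the middle step: explicitly producing the sequence of operations and showing that it terminates in a state containing $-n$ while keeping $\pi_{i-1}$ at a positive position. The natural route is induction on $n-a$, with the base case $a=n-1$ (where the single sign-change above, combined with one exchange involving $+n$, is already enough) and an inductive step that uses the hypothesis $\pi_{j}=+n$ with $j<i$ to ensure the cascade of sign-changes can continue.
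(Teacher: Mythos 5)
Your first step is checked carefully and is indeed an admissible $\left([a-1]--[a]\right)$-sign-change (the pattern $a\to-(a-1)$, $(a-1)\to-a$ really is forced by $\pi_{i-1}=\pi_{-i}+1$), but the proposal has a genuine gap: the middle step, which is the entire content of the lemma, is never carried out. You only assert that some cascade of exchange and cyclic operations "raises the negative sign toward $|x|=n$" and you acknowledge yourself that producing this sequence and proving it terminates is "the hard part." Nothing in Propositions \ref{obs.exchange} or \ref{obs.cyclic} guarantees that the local configurations needed to continue such a cascade are available at each stage, so the existence of an equivalent permutation containing the value $-n$ remains unproved. There is also a concrete problem with the second bullet: since $\pi_{i-1}=-(a-1)$ is negative, the value $+(a-1)$ sits at the negative position $-(i-1)$, so your sign-change sends the value $\pi_{i-1}$ to a \emph{negative} position of $\mu$; your tracking argument assumes a positive position "originally holding $+(a-1)$," which does not exist. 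Unless the unspecified cascade repairs this, the conclusion $\theta_{k}=\pi_{i-1}$ for some $+1\leq k\leq+(n-1)$ (which Lemma \ref{neg} genuinely needs, to keep a negative entry among the first $n-1$ positions for the induction) would fail.

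For contrast, the paper does not use a sign-change here at all. It exploits the hypothesis $\pi_{i-1}=\pi_{-i}+1$ to keep the configuration $[-\pi_{i}]\leftrightsquigarrow[\pi_{i}-1]$ usable throughout a single explicit chain of $i-1-j$ operations $\left([-\pi_{i}]--[y]\right)$-exchange: each step pulls the successor of $\pi_{i-1}$ out of the first cycle of $\pi^{\ast}$ and reinserts its negative just after $\pi_{i}$, so after $i-1-j$ steps the entry $+n$ has been dragged from position $j$ up to position $i-1$, arriving negated as $-n$, while the value $\pi_{i-1}$ is deposited at position $j$ — exactly what the second bullet requires. Lemma \ref{0-n} then moves $-n$ to position $+n$ without touching positions below $i-1$. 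If you want to salvage your route, the missing cascade is precisely this exchange chain; as written, the proposal is a plan rather than a proof.
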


\begin{proof}
Assume $\pi\in B_{n}$ is a signed permutation such that $\sign(\pi_{i})=(+)$ and
$\pi_{i-1}=\pi_{-i}+1$ for some $+2\leq i\leq+n$, and there exists $j$ such that $+1\leq
j<i-1$ and $\pi_{j}=+n$. Then by performing $\left(  [-\pi
_{i}]--[y]\right)  $-exchange operations $i-1-j$ times repeatedly, where
starting on $\pi$, we obtain a signed permutation $\mu$ such that

\begin{itemize}
\item $\mu_{i}=\pi_{i}$;

\item $\mu_{i-1}=-n$;

\item $\mu_{j}=\pi_{i-1}=\pi_{-i}+1$, which implies $\sign(\mu_{j})=(-)$, since
$\sign(\pi_{-i})=(-)$.
\end{itemize}

Now, by using Lemma \ref{0-n}, we conclude that $\mu$ is obtainable from
$\theta$ by a sequence of $\left(  [-0]--[y]\right)  $-exchange operations
$n-i-1$ times repeatedly such that

\begin{itemize}
\item $\theta_{+n}=-n$;

\item $\theta_{k}=\mu_{k}$ for every $k$ such that $+1\leq k<i-1$.
\end{itemize}

Since $j<i-1$, we have $\theta_{j}=\mu_{j}=\pi_{-i}+1$. Therefore,
$\sign(\theta_{j})=(-)$. Now, since $\theta$ and $\mu$ are $\left(
[.]--[.]\right)  $-equivalent and $\mu$ and $\pi$ are $\left(
[.]--[.]\right)  $-equivalent too, we have $\theta$ and $\pi$ are $\left(
[.]--[.]\right)  $-equivalent as well. Hence, the lemma holds.
\end{proof}

\begin{lemma}
\label{neg} Let $\pi\in B_{n}$ be a signed permutation such that $m(\pi)\geq
1$, then there exists $\theta\in B_{n}$ such that

\begin{itemize}
\item $\theta_{+n}=-n$;

\item $\theta$ and $\pi$ are $\left(  [.]--[.]\right)  $-equivalent;

\item $s(\theta)=s(\pi)$.
\end{itemize}
\end{lemma}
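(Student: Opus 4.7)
The plan is to reduce to Lemma \ref{0-n} in all cases, using Lemma \ref{0-n-equiv} as the key intermediate step when $-n$ does not appear in $\pi$ originally.

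If there is an index $j$ with $\pi_j = -n$, then Lemma \ref{0-n} applied directly to $\pi$ produces the desired $\theta$ with $\theta_{+n} = -n$, $(\,[.]--[.]\,)$-equivalent to $\pi$, and satisfying $s(\theta) = s(\pi)$. So we may assume that $-n$ does not appear in $\pi$, which forces $\pi_{j_0} = +n$ for some $j_0$. The hypothesis $m(\pi) \geq 1$ then supplies some index $i_1$ with $\sign(\pi_{i_1}) = (-)$. The strategy is to use a sequence of $\left( [x]--[y] \right)$-exchange, $\left( [x]--[y] \right)$-cyclic, and $\left( [|x|]--[|x|+1] \right)$-sign-change operations to bring $\pi$ into a permutation $\pi'$ that satisfies the hypotheses of Lemma \ref{0-n-equiv}: namely, an index $i \geq +2$ with $\sign(\pi'_i) = (+)$ and $\pi'_{i-1} = \pi'_{-i} + 1$, together with some $j < i$ such that $\pi'_j = +n$.

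The negative sign at position $i_1$ is precisely what enables us to create the required adjacency $\pi'_{i-1} = -\pi'_i + 1$; without it no such configuration could be produced, since exchange operations only change $m(\pi)$ when the relevant signs differ (Proposition \ref{obs.exchange}), and cyclic operations only change $m(\pi)$ in the case $\sign(y) = (-)$ (Proposition \ref{obs.cyclic}). Once a $\pi'$ satisfying Lemma \ref{0-n-equiv}'s hypotheses is reached, that lemma supplies the desired $\theta$ with $\theta_{+n} = -n$. Since each intermediate operation preserves both the number of alternating cycles and the equivalence class (Propositions \ref{obs.exchange}, \ref{obs.cyclic}, \ref{obs.sign.change}, combined in Lemma \ref{equiv-hultman}), the two invariants $s(\theta) = s(\pi)$ and $\theta \sim \pi$ will automatically carry through the whole chain.

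The main obstacle is the rearrangement step: given an arbitrary $\pi$ with $m(\pi) \geq 1$ and $+n$ present, one must explicitly construct a sequence of the allowed operations that produces a local pattern of the form $\ldots,\, -k+1,\, +k,\, \ldots$ at two consecutive positions $i-1, i$, keeping $+n$ at some position $j < i$. I expect this to proceed by a case analysis on $|\pi_{i_1}|$ and on the cyclic structure of $\pi^{\ast}$ around the vertices containing $+n$ and $\pi_{i_1}$: cyclic operations can be used to renumber the negative value $\pi_{i_1}$ into a convenient form, while exchange operations move it adjacent to a suitably chosen positive value, and finally (if necessary) a sign-change creates the $-k+1,\, +k$ pattern. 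Verifying that this chain can always be completed, and that it keeps $+n$ to the left of the created pattern, is the technical heart of the argument.
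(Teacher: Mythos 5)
Your first case is fine: when $-n$ occurs among $\pi_{+1},\dots,\pi_{+n}$, Lemma \ref{0-n} gives the conclusion immediately, and this matches the opening step of the paper's proof. The problem is the complementary case, where $+n$ occurs and the negative entry is some $-k$ with $k<n$. There you do not give a proof; you give a plan (``bring $\pi$ into a permutation satisfying the hypotheses of Lemma \ref{0-n-equiv}'') and then explicitly defer its execution (``Verifying that this chain can always be completed \dots is the technical heart of the argument''). That deferred step \emph{is} the lemma. Nothing in Propositions \ref{obs.exchange}--\ref{obs.sign.change} guarantees that an arbitrary $\pi$ with a negative entry can be driven to a configuration with $\pi'_{i-1}=\pi'_{-i}+1$, $\sign(\pi'_i)=(+)$, and $+n$ to the left: the sign-change operation of Definition \ref{x-x+1} is only available when $\pi^{\ast}$ already contains one of two very specific adjacency patterns, and the cyclic operations only relabel values inside existing adjacencies of $\pi^{\ast}$. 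So ``the negative sign enables us to create the required adjacency'' is a heuristic, not an argument; the whole difficulty is showing that the allowed moves suffice, and that is exactly what is missing.

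For comparison, the paper does not attack the general configuration head-on. It argues by induction on $n$: Lemmas \ref{0-n}, \ref{add-n} and \ref{pi-pi} are used to normalize to $\theta_{+n}=-n$ and strip off the last coordinate, so that the induction hypothesis in $B_{n-1}$ does the heavy lifting, and an explicit three-move computation on $\tau^{\ast}$ (one exchange, one cyclic, one sign-change, written out in cycle notation) is needed only in the single boundary configuration where $\tau_{+n}=-n$ and every other entry is positive; only at that point is Lemma \ref{0-n-equiv} invoked. Your route replaces this structured induction by a direct manipulation of an arbitrary signed permutation, which is a genuinely harder combinatorial task, and the case analysis you anticipate is never carried out. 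As it stands the proposal identifies the right auxiliary lemmas but does not prove the statement.
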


\begin{proof}
The proof is in induction on $n$. Notice, ~$\pi=\langle-1\rangle$ ~is the only
element in $B_{1}$ with $m(\pi)\geq1$, therefore the lemma holds trivially in
case of $n=1$. Assume by induction that the lemma holds for $n_{0}<n$, and we
prove it for $n_{0}=n$. If $\pi_{i}=-n$ for some $+0\leq i\leq+n$, then by
Lemma \ref{0-n}, $\pi$ is $\left(  [.]--[.]\right)  $-equivalent to a signed
permutation $\theta\in B_{n}$ such that $\theta_{+n}=-n$ and $s(\theta
)=s(\pi)$. Moreover, by Lemma \ref{pi-pi}, $s(\theta)=s(\theta^{\prime})$,
where $\theta^{\prime}\in B_{n-1}$ such that $\theta^{\prime}_{i}=\theta_{i}$
for every $-(n-1)\leq i\leq+(n-1)$. If $\sign(\theta^{\prime}_{j})=(-)$ for
some $+1\leq j\leq+(n-1)$, then by the induction hypothesis, $\theta^{\prime}$
is $\left(  [.]--[.]\right)  $-equivalent to every $\tau^{\prime}$ such that
$s(\tau^{\prime})=s(\theta^{\prime})$ and $\sign(\tau^{\prime}_{k})=(-)$ for
some $+1\leq k\leq+(n-1)$. Then by Lemma \ref{pi-pi}, $s(\tau)=s(\tau^{\prime
})$, where $\tau\in B_{n}$ such that $\tau_{i}=\tau^{\prime}(i)$ for every
$-(n-1)\leq i\leq+(n-1)$ and $\tau_{+n}=-n$. Since $\theta^{\prime}$ is
$\left(  [.]--[.]\right)  $-equivalent to $\tau^{\prime}$, by Lemma
\ref{add-n}, $\theta$ is $\left(  [.]--[.]\right)  $-equivalent to $\tau$ as
well. Thus we conclude, $\theta$ is $\left(  [.]--[.]\right)  $-equivalent to
$\tau$ if the following holds:

\begin{itemize}
\item $\theta_{+n}=\tau_{+n}=-n$;

\item $s(\theta)=s(\tau)$;

\item $\sign(\theta_{i})=\sign(\tau_{j})=(-)$ for some $+1\leq i, j\leq+(n-1)$.
\end{itemize}

Now, consider $\tau\in B_{n}$ such that $\tau_{+n}=-n$ but $\sign(\tau
_{j})=(+)$ for all $+1\leq j\leq+(n-1)$. Then
\[
\tau^{\ast}=(+0, -n, \tau_{+(n-1)}, \ldots, \tau_{+1})\cdot(\tau_{-1},
\ldots, \tau_{-(n-1)}, +n, -0).
\]
Now, by performing a $\left(  [-n]--[\tau_{+(n-1)}]\right)  $-exchange operation
on $\tau$ we obtain $\mu$ such that
\[
\mu^{\ast}=(+0, \tau_{-(n-1)}, -n, \ldots, \tau_{+1})\cdot(\tau_{-1},
\ldots, +n, \tau_{+(n-1)}, -0).
\]
Now, by performing a $\left(  [-n]--[\tau_{-(n-1)}]\right)  $-cyclic operation
on $\mu$, we obtain $\zeta$ such that
\[
\zeta^{\ast}=(+0, \tau_{-(n-1)}-1, \tau_{+(n-1)}, \ldots, +n, \ldots
)\cdot(\ldots, -n, \ldots, \tau_{-(n-1)}, \tau_{+(n-1)}+1, -0).
\]
Now, by performing a $\left(  [\tau_{+(n-1)}]--[\tau_{+(n-1)}+1]\right)
$-sign-change operation on $\zeta$, we obtain $\eta$ such that
\[
\eta^{\ast}=(+0, \tau_{+(n-1)}+1, \tau_{-(n-1)}, \ldots, +n, \ldots
)\cdot(\ldots, -n, \ldots, \tau_{+(n-1)}, \tau_{-(n-1)}-1, -0).
\]
Since $\sign(\eta_{+n})=(+)$, $\eta_{n-1}=\eta_{-n}+1$, and $\eta_{j}=+n$ for
some $+1\leq j<+n$, by Lemma \ref{0-n-equiv}, there exists a signed
permutation $\theta\in B_{n}$ such that

\begin{itemize}
\item $\theta_{+n}=-n$;

\item $\theta_{k}=\eta_{+(n-1)}$ for some $+1\leq k\leq+(n-1)$;

\item $\theta$ and $\eta$ are $\left(  [.]--[.]\right)  $-equivalent;

\item $s(\theta)=s(\eta)$.
\end{itemize}

Now, since $\eta$ and $\pi$ are $\left(  [.]--[.]\right)  $-equivalent and
$s(\eta)=s(\pi)$, we conclude $\pi$ and $\theta$ are $\left(  [x]--[y]\right)
$-equivalent and $s(\pi)=s(\theta)$ as well.
\end{proof}

\begin{theorem}
\label{main-theorem} Let $\pi$ and $\theta$ be two signed permutations in
$B_{n}$ such that $s(\pi)=s(\theta)=k$, then

\begin{itemize}
\item  if $m(\pi)=m(\theta)=0$, then $\pi$ and $\theta$ are $\left(
[x]--[y]\right)  $-equivalent, and
\[
{\Pr}_{\pi}(G)={\Pr}_{\theta}(G)={\Pr}^{n-k+1}(G).
\]

\item if $m(\pi)>0$ and $m(\theta)>0$, then $\pi$ and $\theta$ are
$\left(  [.]--[.]\right)  $-equivalent, and
\[
{\Pr}_{\pi}(G)={\Pr}_{\theta}(G)={\Pr}^{-(n-k+1)}(G).
\]

\end{itemize}
\end{theorem}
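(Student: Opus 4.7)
The proof naturally splits along the dichotomy $m(\pi)=0$ versus $m(\pi)>0$, which by Proposition \ref{m_equal_zero} is an invariant of $\left([.]--[.]\right)$-equivalence. In each branch my strategy is to show that every signed permutation with the prescribed $s$-value and sign parity is $\left([.]--[.]\right)$-equivalent to a single canonical representative whose commuting probability is already computed, and then to transfer the probability via Lemma \ref{equiv-hultman}.

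For the positive case $m(\pi)=m(\theta)=0$, I plan to reduce the claim to the main theorem of \cite{CGLS}. The key observations are: (i) when restricted to positive signed permutations, the $\left([x]--[y]\right)$-exchange and $\left([x]--[y]\right)$-cyclic operations preserve positivity (the relevant sign-parity clauses in Propositions \ref{obs.exchange} and \ref{obs.cyclic}) and correspond bijectively to the $(x--y)$-exchange and $(x--y)$-cyclic operations on $|\pi|\in S_n$ from \cite{CGLS}, and (ii) for positive $\pi$ the breakpoint graph $\Gr(\pi)$ encodes the cycle graph $\mathcal{G}(|\pi|)$ in such a way that $s(\pi)=H(|\pi|)$ (readable directly from Corollary \ref{perm-alternating} and Definition \ref{hultman}). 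Combining (i)--(ii) with \cite{CGLS} gives the equivalence together with $\Pr_\pi(G)=\Pr^{n-H(|\pi|)+1}(G)=\Pr^{n-k+1}(G)$ via Proposition \ref{hultman-pr}.

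For the non-positive case $m(\pi),m(\theta)>0$, I would induct on $n$, taking as canonical representative the element $I^{(-(n-k+1))}$, which by Proposition \ref{i-k} satisfies $s(I^{(-(n-k+1))})=k$ and by the remark after Definition \ref{ik} satisfies $\Pr_{I^{(-(n-k+1))}}(G)=\Pr^{-(n-k+1)}(G)$. The base case $n=1$ is immediate since $\langle -1\rangle=I^{(-1)}$ is the unique signed permutation in $B_1$ with $m>0$. For the inductive step I would first apply Lemma \ref{neg} to obtain $\theta$ equivalent to $\pi$ with $\theta_{+n}=-n$ and $s(\theta)=k$, then pass to the restriction $\theta'\in B_{n-1}$, which by Lemma \ref{add-n} satisfies $s(\theta')=k$. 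If $m(\theta')>0$ the induction hypothesis gives $\theta'\sim I^{(-(n-k))}$ in $B_{n-1}$, and Lemma \ref{pi-pi} lifts this equivalence into $B_n$; a short sequence of sign-change, cyclic and exchange operations then moves the lifted canonical form onto $I^{(-(n-k+1))}$. If instead $m(\theta')=0$, the positive case of the theorem (already proved in $B_{n-1}$) identifies $\theta'$ up to equivalence, and the argument at the end of the proof of Lemma \ref{neg}, composing $\left([|x|]--[|x|+1]\right)$-sign-change operations with cyclic and exchange operations, transports the lift back into the non-positive canonical form.

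The main obstacle will be the sub-case $m(\theta')=0$ in the inductive step, since there the non-positive inductive hypothesis is not directly available for $\theta'$ and one must traverse through the positive case in $B_{n-1}$ and then restore a negative entry in $B_n$ without changing $s$. Keeping the result of this traversal on the \emph{specific} representative $I^{(-(n-k+1))}$, rather than on some other non-positive permutation of the same $s$, is the delicate point and is precisely where the full technology developed in Section \ref{exch-cyc}, especially the sign-change operation and its interaction with the cyclic operation, is essential.
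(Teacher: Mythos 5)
Your proposal is correct and follows essentially the same route as the paper: the positive case is delegated to \cite{CGLS} via the identification $s(\pi)=H(|\pi|)$, and the non-positive case is an induction on $n$ reducing to the canonical representative $I^{(-(n-k+1))}$ through Lemmas \ref{neg}, \ref{add-n}, \ref{pi-pi} and \ref{0-n-equiv}. The only difference is organizational: the paper buries the induction (including the delicate $m(\theta')=0$ sub-case you single out) inside the proof of Lemma \ref{neg} and then invokes that lemma twice in the theorem's proof, whereas you run the induction at the level of the theorem itself, which if anything makes the transitivity step more transparent.
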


\begin{proof}
If $m(\pi)=0$, then $\pi=|\pi|$, and therefore ${\Pr}_{\pi}(G)={\Pr}_{|\pi
|}(G)$. Then we deal with the special case, which have been proved in
\cite{CGLS}. Therefore, assume $m(\pi)>0$. Then by Lemma \ref{neg}, $\pi$ is
$\left(  [.]--[.]\right)  $-equivalent to a signed permutation $\theta$ such
that $s(\theta)=s(\pi)$ and $\theta_{+n}=-n$. By Proposition \ref{i-k},
$s(I^{(k)})=s(\pi)=s(\theta)$ for $k=n-s(\pi)+1$. Therefore, by using Lemma
\ref{neg} again, $I^{(k)}$ is $\left(  [.]--[.]\right)  $-equivalent to
$\theta$ as well. Hence, we conclude that $I^{(k)}$ and $\pi$ are $\left(
[x]--[y]\right)  $-equivalent for $k=n-s(\pi)+1$. Hence, we get
\[
{\Pr}_{\pi}(G)={\Pr}_{\theta}(G)={\Pr}^{-(n-k+1)}(G).
\]

\end{proof}

We notice that there are two extreme cases of finite group $G$ for applying
Theorem \ref{main-theorem}.

\begin{enumerate}
\item The case of finite ambivalent group $G$, where every element $g\in G$ is
conjugate to its inverse $g^{-1}\in G$.

\item The case of group of odd order $G$, where no element $g\neq1$ is
conjugate to its inverse $g^{-1}\in G$.
\end{enumerate}

Hence, we get the following two corollaries.

\begin{corollary}
\label{ambi-prob} Let $G$ be a finite group, then
\[
{\Pr}_{\theta}(G)={\Pr}_{\pi}(G),
\]
for every $\theta$ and $\pi$ in $B_{n}$ such that $s(\theta)=s(\pi)$ if and
only if $G$ is an ambivalent finite group. Which means, ${\Pr}_{\pi}(G)$
depends only on $s(\pi)$ (the number of alternating cycles in $\Gr(\pi)$),
regardless of whether $\pi$ is a positive or a non-positive signed permutation.
\end{corollary}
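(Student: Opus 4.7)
The plan is to deduce the biconditional from Theorem \ref{main-theorem} together with Corollary \ref{ambivalent}, augmented by a parity observation for positive signed permutations.

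For the reverse implication, suppose $G$ is ambivalent and $\pi, \theta \in B_n$ satisfy $s(\pi)=s(\theta)=k$. Theorem \ref{main-theorem} expresses $\Pr_\pi(G)$ as $\Pr^{n-k+1}(G)$ when $m(\pi)=0$ and as $\Pr^{-(n-k+1)}(G)$ when $m(\pi)>0$, and likewise for $\theta$. When $\pi$ and $\theta$ share the same sign type, the equality ${\Pr}_\pi(G)={\Pr}_\theta(G)$ is immediate. The only delicate case is when one is positive and the other non-positive, where the claim reduces to $\Pr^{n-k+1}(G)=\Pr^{-(n-k+1)}(G)$; by Corollary \ref{ambivalent}, this holds in ambivalent groups provided $n-k+1$ is even. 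To secure this parity, I would observe that for a positive $\pi\in B_n$ the permutation $\pi^\ast$ of Definition \ref{bullet-bullet} already splits into a cycle on $\{+0,+1,\ldots,+n\}$ and a cycle on $\{-0,-1,\ldots,-n\}$, hence so does $\pi^\circ$. Restricted to the positive half, $\pi^\circ$ is exactly the standard cycle-graph permutation of $|\pi|\in S_n$, and by Corollary \ref{perm-alternating} this gives $s(\pi)=H(|\pi|)$. The classical parity relation $H(\sigma)\equiv n+1\pmod 2$ for $\sigma\in S_n$ (which underlies the very definition of Hultman numbers) then forces $n-k+1\equiv 0\pmod 2$ whenever $k=s(\pi)$ for some positive $\pi$.

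For the forward implication, I argue by contrapositive. Assume $G$ is not ambivalent; then by \cite{G, CGLS} and Proposition \ref{pr-1}, $\Pr^2(G)=c(G)/|G|\neq rc(G)/|G|=\Pr^{-2}(G)$. Take $n=2$, $\pi=\langle+2\;\;+1\rangle$, and $\theta=I^{(-2)}=\langle-1\;\;-2\rangle$. A direct computation from Definition \ref{bullet-bullet} gives $\pi^\circ=(+0,+2,+1)(-0,-1,-2)$, so $s(\pi)=1$ by Corollary \ref{perm-alternating}, while Proposition \ref{i-k} gives $s(\theta)=2-2+1=1$. Thus $s(\pi)=s(\theta)$, but by Theorem \ref{main-theorem}, $\Pr_\pi(G)=\Pr^2(G)$ and $\Pr_\theta(G)=\Pr^{-2}(G)$, which differ, contradicting the assumed equality.

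The main obstacle is the parity step in the reverse direction, specifically establishing $s(\pi)\equiv n+1\pmod 2$ for positive $\pi$. The cleanest route is through the identification $s(\pi)=H(|\pi|)$ described above, after which the parity of Hultman numbers in $S_n$ concludes the argument; the alternative of verifying this parity directly from the breakpoint-graph structure is less transparent and best avoided.
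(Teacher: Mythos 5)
Your proposal is correct and follows essentially the same route as the paper, which simply cites Theorem \ref{main-theorem} together with Corollary \ref{ambivalent}. You additionally make explicit two details the paper leaves implicit --- the parity fact $s(\pi)=H(|\pi|)\equiv n+1\pmod 2$ for positive $\pi$, which is genuinely needed because Corollary \ref{ambivalent} only covers even exponents, and an explicit witness pair for the converse --- and both are handled correctly.
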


\begin{proof}
The result holds by applying Corollary \ref{ambivalent} in Theorem
\ref{main-theorem}
\end{proof}

\begin{corollary}
\label{odd-prob} Let $G$ be a finite group, then
\[
{\Pr}_{\theta}(G)={\Pr}_{\pi}(G)=\frac{1}{|G|},
\]
for every  $\theta$ and $\pi$ in $B_{n}$ such that $\theta$ and $\pi$ are
non-positive (which means $m(\theta)>0$ and $m(\pi)>0$), without any
dependance on the values of $s(\theta)$ and $s(\pi)$, if and only if $G$ is an
odd order group. That is every non-positive $\pi$ satisfies ${\Pr}_{\pi
}(G)=\frac{1}{|G|}$, regardless of the number of alternating cycles in
$\Gr(\pi)$.
\end{corollary}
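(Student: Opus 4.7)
The plan is to obtain both implications by combining Theorem \ref{main-theorem} with Corollary \ref{odd-order} (or, more directly, with Proposition \ref{pr-1}).

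For the forward direction, I would assume that $G$ has odd order. Given any non-positive $\pi \in B_n$ (so $m(\pi) > 0$), set $k = s(\pi)$. Theorem \ref{main-theorem} yields
\[
{\Pr}_\pi(G) = {\Pr}^{-(n-k+1)}(G),
\]
and Corollary \ref{odd-order} tells us that $\Pr^{-m}(G) = \frac{1}{|G|}$ for every positive integer $m$ whenever $|G|$ is odd. Applying this with $m = n-k+1$ gives $\Pr_\pi(G) = \frac{1}{|G|}$, irrespective of the particular value of $s(\pi)$.

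For the reverse direction, the idea is to extract the odd-order condition from a single well-chosen non-positive signed permutation. The natural choice is $\pi = I^{(-1)} = \langle -1\;\;+2\;\;\ldots\;\;+n\rangle$, which is non-positive since $m(\pi) = 1 > 0$. By the remark following Definition \ref{ik}, ${\Pr}_{I^{(-1)}}(G) = {\Pr}^{-1}(G)$, and by Proposition \ref{pr-1}, ${\Pr}^{-1}(G) = \frac{\inv(G)}{|G|}$. The hypothesis forces ${\Pr}_{I^{(-1)}}(G) = \frac{1}{|G|}$, hence $\inv(G) = 1$, meaning that the only involution in $G$ is the identity. By Cauchy's theorem, this is equivalent to $G$ having odd order.

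There is essentially no substantial obstacle: the two directions each reduce to citing a previously established result (Theorem \ref{main-theorem} plus Corollary \ref{odd-order}, or Proposition \ref{pr-1}). The only subtlety worth stating explicitly in the write-up is that the hypothesis ``${\Pr}_\pi(G) = \frac{1}{|G|}$ for \emph{every} non-positive $\pi$'' is much stronger than needed for the converse; just one well-chosen instance, $I^{(-1)}$, already isolates the odd-order condition via involutions.
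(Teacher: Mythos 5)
Your proposal is correct and follows essentially the same route as the paper: the forward direction combines Theorem \ref{main-theorem} with Corollary \ref{odd-order}, and your reverse direction (specializing to $I^{(-1)}$, using $\Pr^{-1}(G)=\inv(G)/|G|$ and Cauchy's theorem) is exactly the content already packaged inside the backward implication of Corollary \ref{odd-order}, which the paper simply cites. Your write-up is just a more explicit unpacking of the same argument.
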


\begin{proof}
The result holds by applying Corollary \ref{odd-order} in Theorem
\ref{main-theorem}
\end{proof}

Finally, we have the following corollary, which generalizes Corollary
\ref{odd-prob}.

\begin{corollary}
Let $G=G_{1}\bigoplus G_{2}$ such that $G_{1}$ is an abelian $2$-group, and
$G_{2}$ is a group, which has an odd order (i.e., $G_{1}$ is the $2$-sylow
subgroup of $G$). Then
\[
{\Pr}_{\theta}(G)={\Pr}_{\pi}(G)={\Pr}^{-1}(G)=\frac{\inv(G)}{|G|},
\]
for every $\theta$ and $\pi$ in $B_{n}$ such that $\theta$ and $\pi$ are
non-positive (which means $m(\theta)>0$ and $m(\pi)>0$).
\end{corollary}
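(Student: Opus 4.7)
The plan is to reduce the statement to the two corollaries already proved for $\Pr^{-k}$, namely Corollaries \ref{odd-order}, \ref{abelian}, and \ref{direct-sum}, and then invoke Theorem \ref{main-theorem} to translate from $\Pr_{\pi}(G)$ to $\Pr^{-k}(G)$.

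First, since $\pi$ is non-positive, Theorem \ref{main-theorem} gives
\[
{\Pr}_{\pi}(G) = {\Pr}^{-(n-s(\pi)+1)}(G),
\]
and similarly for $\theta$. So the claim reduces to showing that $\Pr^{-k}(G)$ is independent of $k$ (for $k\ge 1$) and equals $\inv(G)/|G|$ in the present setting.

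Next I would use the direct-sum formula from Corollary \ref{direct-sum} to write
\[
{\Pr}^{-k}(G) = {\Pr}^{-k}(G_{1}) \cdot {\Pr}^{-k}(G_{2}).
\]
For the odd-order factor $G_{2}$, Corollary \ref{odd-order} yields ${\Pr}^{-k}(G_{2}) = 1/|G_{2}|$ for every $k$. For the abelian 2-group factor $G_{1}$, Corollary \ref{abelian} gives ${\Pr}^{-k}(G_{1}) = \inv(G_{1})/|G_{1}|$ for every $k$. Multiplying, we obtain
\[
{\Pr}^{-k}(G) = \frac{\inv(G_{1})}{|G_{1}|\cdot|G_{2}|} = \frac{\inv(G_{1})}{|G|}.
\]

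Finally, I would observe that since $G_{2}$ has odd order it contains no non-trivial involution, so the only involutions of $G = G_{1}\oplus G_{2}$ are of the form $(b,1)$ with $b^{2}=1$ in $G_{1}$; hence $\inv(G) = \inv(G_{1})$. Therefore ${\Pr}^{-k}(G) = \inv(G)/|G| = {\Pr}^{-1}(G)$ for every $k\ge 1$, and combining with the first display yields ${\Pr}_{\pi}(G) = {\Pr}_{\theta}(G) = \inv(G)/|G|$ as required. There is no real obstacle here: once the problem is fed through Theorem \ref{main-theorem}, the whole statement is a one-line consequence of Corollaries \ref{odd-order}, \ref{abelian}, \ref{direct-sum}; the only thing to double-check is the counting of involutions in a direct sum with an odd-order factor.
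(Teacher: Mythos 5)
Your proposal is correct and follows essentially the same route as the paper: reduce via Theorem \ref{main-theorem} to ${\Pr}^{-k}(G)$, split it with Corollary \ref{direct-sum}, and evaluate the two factors by Corollaries \ref{abelian} and \ref{odd-order}. The only difference is that you make explicit the observation $\inv(G)=\inv(G_{1})$, which the paper's proof uses silently in its final equality.
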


\begin{proof}
Since $G_{1}$ is an abelian group, by Corollary \ref{abelian}, ${\Pr
}^{-k}(G_{1})=\frac{\inv(G_{1})}{|G_{1}|}$ for every $k\in\mathbb{N}$.
Similarly, since the order of $G_{2}$ is odd, by Corollary \ref{odd-order},
${\Pr}^{-k}(G_{2})=\frac{1}{|G_{2}|}$ for every $k\in\mathbb{N}$. Hence, by
using $G=G_{1}\bigoplus G_{2}$ and Corollary \ref{direct-sum}, we conclude
\[
{\Pr}^{-k}(G)={\Pr}^{-k}(G_{1})\cdot{\Pr}^{-k}(G_{2})=\frac{\inv(G_{1})}%
{|G_{1}|\cdot|G_{2}|}=\frac{\inv(G)}{|G|},
\]
for every $k\in\mathbb{N}$. Now, by applying Theorem \ref{main-theorem}, we
get the desired result of the corollary.
\end{proof}

\section{Conclusion and future plans}

\label{conc}

In this paper, we generalize the results of \cite{CGLS}, where we find an
interesting connection between the number of cycles in the breakpoint graph $\Gr(\pi)$ for a
signed permutation $\pi\in B_{n}$ and the signed generalized commuting
probability ${\Pr}_{\pi}(G)$, which is a generalization of the generalized
commuting probability, which was defined in \cite{CGLS}. In contrast to the
results of \cite{CGLS}, in the case of $\pi\in B_{n}$, the following changes occur:

\begin{itemize}
\item For $\pi\in B_{n}$, the parity of ~$n-s(\pi)+1$ ~is not necessarily
even, it can be any integer;

\item If $G$ is a non-ambivalent group, then ~$s(\pi)=2k$ ~induces two
equivalence classes of ~${\Pr}_{\pi}(G)$, namely:

\begin{itemize}
\item ${\Pr}^{2k}(G)={\Pr}_{\pi}(G)$ ~for a positive $\pi\in B_{n}$;

\item ${\Pr}^{-2k}(G)={\Pr}_{\pi}(G)$ ~for a non-positive $\pi\in B_{n}$,
\end{itemize}

such that ${\Pr}^{2k}(G)\neq{\Pr}^{-2k}(G)$;

\item If $G$ has an odd order, then ~${\Pr}_{\pi}(G)={\Pr}^{-k}(G)=\frac
{1}{|G|}$ ~for every non-positive $\pi\in B_{n}$ and every $k\in\mathbb{N}$.

\item If $G$ has an abelian $2$-sylow subgroup, and $G$ is a direct sum of
its $2$-sylow subgroup with an odd order group, then ~${\Pr}_{\pi}(G)={\Pr
}^{-k}(G)=\frac{\inv(G)}{|G|}$ ~for every non-positive $\pi\in B_{n}$ and every
$k\in\mathbb{N}$.
\end{itemize}

It might be interesting to find further generalizations of the generalized
commuting probability. For instance, classifying the probabilities of
\[
\Pr(a_{1}a_{2}\cdots a_{n}=a_{\pi_{1}}^{\prime}a_{\pi_{2}}^{\prime}\cdots
a_{\pi_{n}}^{\prime}),
\]
where $\pi$ is a permutation of $S_{n}$ and $a_{i}^{\prime}$ is a specific
automorphic or anti-automorphic image of $a_{i}$, under a defined automorphism
of the group $G$.

\end{document}